\documentclass[12pt]{article}

\usepackage[T1]{fontenc}
\usepackage[utf8]{inputenc}
\usepackage{amsmath,amssymb,amsthm,mathtools}
\usepackage{enumitem}
\usepackage{booktabs}
\usepackage[protrusion=true,expansion=false]{microtype}
\usepackage{graphicx}
\usepackage{tikz}
\usetikzlibrary{arrows.meta,calc,decorations.pathreplacing}
\usepackage{subcaption}
\usepackage{geometry}
\usepackage{float}
\usepackage{xcolor}
\newif\ifreview
\reviewtrue 
\ifreview
\definecolor{softpink}{rgb}{1.0,0.55,0.70}
\definecolor{reviewblue}{rgb}{0.05,0.35,0.80}
\newcommand{\problematic}[1]{\begingroup\sloppy\textcolor{red}{#1}\endgroup}

\newcommand{\reviewnote}[1]{\begingroup\sloppy\textit{[Reviewer note: #1]}\endgroup}
\else
\definecolor{softpink}{rgb}{0,0,0}
\definecolor{reviewblue}{rgb}{0,0,0}
\newcommand{\problematic}[1]{}

\newcommand{\reviewnote}[1]{}
\fi
\usepackage[
  colorlinks=true,
  linkcolor=blue,
  citecolor=blue,
  urlcolor=blue
]{hyperref}

\newtheorem{theorem}{Theorem}[section]
\newtheorem{proposition}[theorem]{Proposition}
\newtheorem{lemma}[theorem]{Lemma}
\newtheorem{corollary}[theorem]{Corollary}
\newtheorem{conjecture}[theorem]{Conjecture}
\theoremstyle{definition}
\newtheorem{definition}[theorem]{Definition}
\newtheorem{example}[theorem]{Example}
\theoremstyle{remark}
\newtheorem{remark}[theorem]{Remark}

\newcommand{\phiGR}{\varphi}
\newcommand{\RR}{\mathbb{R}}
\newcommand{\ZZ}{\mathbb{Z}}
\newcommand{\QQ}{\mathbb{Q}}
\newcommand{\Hcech}{\check{H}}
\newcommand{\pE}{E_{\parallel}}     
\newcommand{\pEperp}{E_{\perp}}     

\title{Matching Rules as Cocycle Conditions: Discrete Potentials on Penrose and Canonical Projection Tilings}
\author{
 Sebastian Pardo-Guerra\thanks{Recognition Physics Institute, Austin, TX, USA. \texttt{sebas@recognitionphysics.org}},\quad
 Jonathan Washburn\thanks{Recognition Physics Institute, Austin, TX, USA. \texttt{jon@recognitionphysics.org}},\quad
 Elshad Allahyarov$^{1,2,3,}$\thanks{%
  Recognition Physics Institute, Austin, Texas, USA.\\
  $^{1}$\,Institut f\"ur Theoretische Physik II: Weiche Materie, Heinrich-Heine Universit\"at D\"usseldorf,
  Universit\"atsstra{\ss}e~1, 40225 D\"usseldorf, Germany.\\
  $^{2}$\,Theoretical Department, Joint Institute for High Temperatures, Russian Academy of Sciences (IVTAN),
  13/19 Izhorskaya street, Moscow 125412, Russia.\\
  $^{3}$\,Department of Physics, Case Western Reserve University, Cleveland, Ohio 44106-7202, USA. 
\texttt{elshad.allakhyarov@case.edu}}\quad}
\date{}

\begin{document}
\maketitle

\begin{abstract}
Aperiodic tilings support two classically studied but hitherto separately presented structures: matching rules, which enforce global order via local constraints, and height functions, which encode global geometry through integer-valued potentials. Their precise relationship has remained implicit in the literature. This paper bridges them via a cochain-first framework, establishing a four-way equivalence---between matching rules, Ammann bar continuity, cycle closure of the associated $1$-cochains, and height-function existence---proved for candidate tilings without presupposing any of the four conditions.

The proof proceeds via a half-edge/gluing construction: for each Ammann bar family, we assign to every directed edge a signed bar-crossing count, yielding an antisymmetric $1$-cochain. A tile-side crossing function and a global cochain are built in two stages; the global cochain exists precisely when adjacent tiles agree on shared edges. Gluing implies cycle closure; the discrete Poincar\'{e} lemma then produces a scalar potential coinciding with the classical Ammann height function.

The framework extends uniformly to canonical projection tilings (CPTs) from $\mathbb{Z}^N$: lattice-coordinate cochains reconstruct vertex positions via $v = \sum_{k=1}^N x_k(v)\,\mathbf{e}_k^*$, and (for CPTs with generic window) form a $\mathbb{Z}$-basis for $\check{H}^1 \cong \mathbb{Z}^N$ (Forrest--Hunton--Kellendonk), yielding a conservation-forced structure with recognition gap $\mathcal{R}(\mathcal{T}) \cong \mathbb{Z}^N$. The framework is verified for the Fibonacci chain, Penrose P2, Ammann--Beenker, and the icosahedral Ammann tiling; whether conservation forcing characterises exactly the Pisot substitution CPTs is left as an open conjecture.
\end{abstract}

\medskip
\noindent\textbf{Keywords:}
Penrose tilings, canonical projection tilings, aperiodic order,
discrete potential theory, Ammann height functions, matching rules,
cycle closure, conservation forcing, pattern-equivariant cohomology,
Fibonacci chain, icosahedral tilings

\medskip
\noindent\textbf{MSC 2020:}
52C23 (Quasicrystals, aperiodic tilings);
55N05 (\v{C}ech cohomology);
05C21 (Flows in graphs)

\medskip
\ifreview
\tableofcontents
\fi
\medskip

\section{Introduction}
\label{sec:introduction}

\subsection{Background and motivation}
\label{sec:background-motivation}

Penrose aperiodic tilings~\cite{Penrose1974,BaakeGrimm2013} are planar tessellations
that exhibit long-range orientational order without translational periodicity.
Their combinatorial structure is enforced by \emph{matching rules}---local
edge-decoration constraints that forbid periodic assemblies---and organised by the
\emph{Ammann height function}~\cite{GrunbaumShephard1987,BaakeGrimm2013},
which assigns integer values to vertices so that height differences encode
geometric information about edge types.

Despite decades of parallel study, these two structures have been developed in entirely separate technical languages: matching rules are treated as combinatorial local constraints, while height functions are constructed globally only after tiling validity is assumed. No unified algebraic framework capable of handling \emph{candidate} tilings---assemblies whose decorations are present but may violate matching rules---has been established. This gap is the central problem the present paper addresses.

This paper closes that gap via a cochain-first framework. The key device is a two-stage half-edge/gluing construction (\S\ref{sec:main}): bar-crossing functions are defined tile-by-tile from local decoration data before any global validity is assumed; a globally consistent $1$-cochain then exists precisely when adjacent tile values agree on every shared directed edge---exactly the matching rule condition. Matching rules are therefore local \emph{conservation} laws: algebraic cycle-closure conditions on a directed graph. The precise four-way equivalence is stated in \S\ref{sec:survey} below.

\subsection{Relationship to prior work}
\label{sec:prior-work}

We now situate the paper in its prior-work context. Understanding what was already known makes the novelty of the unified framework precise.

The correspondence between tiling conditions and discrete $1$-cochains
has roots in several independent lines of work, which together
motivate the unified perspective developed here.

The closest structural antecedent is Thurston~\cite{Thurston1990}, who
showed that tileability of a planar region by dominoes or lozenges is
equivalent to cycle closure of a height-difference cochain on the dual
graph; \S\S\ref{sec:discrete-potential}--\ref{sec:main} are the direct
aperiodic analogue of this framework.
A complementary global approach is provided by
Conway--Lagarias~\cite{ConwayLagarias1990}, who frame tileability as a
word problem in a group; the group identity condition is a conservation
law in the present sense.
A detailed comparison with both frameworks,
including their structural differences from the cochain approach, appears
in \S\ref{sec:discussion}.

The cohomological framework most directly relevant to the present work is
that of Forrest, Hunton, and Kellendonk~\cite{ForrestHuntonKellendonk2002},
who develop \v{C}ech cohomology for canonical projection tilings via
pattern-equivariant cochains and compute
$\check{H}^1(\Omega_P;\ZZ)\cong\ZZ^5$ for the Penrose hull, with the
five bar-crossing cochains as generators.
The present paper provides explicit self-contained graph-theoretic proofs for P2 tilings of the properties that FHK establish at the general CPT level: the half-edge/gluing construction (Definition~\ref{def:half-edge-crossing}, Lemma~\ref{lem:gluing}), face-sum vanishing (Lemmas~\ref{lem:tile-sum-zero} and~\ref{lem:face-sum-general}), and the five-family validity and reconstruction theorem (Theorem~\ref{thm:five-family}) are all made self-contained within this graph-theoretic vocabulary.

A fourth line of work, due to De~Bruijn~\cite{deBruijn1981}, shows
that every P2 tiling arises as the dual of a ``pentagrid''---a
superposition of five families of parallel lines indexed by five
integer strip coordinates.
Theorem~\ref{thm:five-family}(ii) identifies these strip indices
directly with the five Ammann height functions, so that the
potential-theoretic and pentagrid descriptions of vertex position
are unified in a single formula: each vertex is the
$\tfrac{2}{5}$-weighted sum of its five height values along the five
bar normals.
This identification, together with the injectivity of the
height-tuple map $\Phi\colon V\to\ZZ^5$, makes explicit a
consequence that is implicit in the Forrest--Hunton--Kellendonk
framework but not stated there in coordinate form.

For comprehensive background on Penrose tilings, canonical projection
tilings, and their cohomology we recommend
Baake--Grimm~\cite{BaakeGrimm2013},
Gr\"{u}nbaum--Shephard~\cite{GrunbaumShephard1987},
and Sadun~\cite{Sadun2008}.

Taken together, prior work establishes the individual ingredients---height functions, cycle-closure conditions, cohomological generators, and pentagrid coordinates---but stops short of a unified, candidate-tiling-aware cochain language with self-contained graph-theoretic proofs and a direct extension to general canonical projection tilings. The present paper fills this gap by building that language from the ground up.

\subsection{Survey scope and central correspondence}
\label{sec:survey}

With the prior work in view, we now describe the main results and the scope of the unified framework developed in this paper.

The individual ingredients---height functions, matching rules, the discrete
Poincar\'{e} lemma, bar continuity, and the cohomology of tiling spaces---are
all known (see \S\ref{sec:prior-work}).
This paper assembles these ingredients into a single cochain/potential framework with uniform notation, clearly separating general facts about $1$-cochains on graphs from results specific to Penrose decorations and Ammann bar data; a taxonomy of what is classical, reorganised, or new appears in \S\ref{sec:novelty}.

Theorem~\ref{thm:matching-conservation} establishes the following four-way equivalence for candidate tilings (Proposition~\ref{prop:height-potential} identifies the individual bar-crossing cochains as scalar potentials):

\begin{enumerate}[label=(\roman*),noitemsep]
\item $\mathcal{T}$ satisfies the Penrose matching rules.
\item Ammann bars form continuous straight lines across $\mathcal{T}$.
\item The bar-crossing functions $\Delta_{\mathbf{n}}$ satisfy cycle closure.
\item A globally consistent height function $h\colon V \to \ZZ$ exists.
\end{enumerate}

The equivalence \textup{(i)}$\Leftrightarrow$\textup{(ii)} (matching rules $\Leftrightarrow$ bar continuity) is classical~\cite{GrunbaumShephard1987,BaakeGrimm2013}; the remaining three implications are new and are proved here via the half-edge/gluing construction (Definition~\ref{def:half-edge-crossing}, Lemma~\ref{lem:gluing}).

Table~\ref{tab:dictionary} summarises the structural correspondence developed in \S\S\ref{sec:discrete-potential}--\ref{sec:main}:

\begin{table}[ht]
\centering
\caption{Structural correspondence between Penrose tiling theory
and discrete potential theory.}
\label{tab:dictionary}
\small
\begin{tabular}{ll}
\toprule
\textbf{Penrose Tiling Theory} & \textbf{Discrete Potential Theory} \\
\midrule
Tiling adjacency graph $G_{\mathcal{T}}$ & Connected graph $G$ \\
Edge with bar-crossing data & Directed edge with antisymmetric function $\Delta$ \\
Bar continuity (from matching rules) & Cycle closure \\
Ammann height function $h$ & Scalar potential $h$ \\
Matching rule at shared edge & Antisymmetric conservation \\
\bottomrule
\end{tabular}
\end{table}

\subsection{Scope of contributions}
\label{sec:novelty}

For transparency, the paper's content falls into three categories,
surveyed here in increasing order of novelty: classical results recalled
for self-containedness, known results reorganised into unified
cochain-potential language, and several results and constructions proved here.

Several classical facts are recalled without re-proof to keep the
paper self-contained.
It is known that height functions for valid Penrose tilings exist and
coincide with the Ammann
height~\cite{GrunbaumShephard1987,BaakeGrimm2013}, that matching rules
are equivalent to height-function existence~\cite{BaakeGrimm2013}, and
that the substitution entropy of any primitive substitution equals
$\ln\lambda_{\mathrm{PF}}$~\cite{Queffelec2010}.
Thurston's height-function framework~\cite{Thurston1990} establishes the
cycle-closure--height-function equivalence in the periodic setting that
the present paper imports into the aperiodic regime; at the cohomological
level, the computation
$\check{H}^1(\Omega_P;\ZZ)\cong\ZZ^5$ with generators given by the five
bar-crossing cochains is due to
Forrest--Hunton--Kellendonk~\cite{ForrestHuntonKellendonk2002}; and
de~Bruijn's pentagrid construction~\cite{deBruijn1981} identifies vertex
positions via five integer strip coordinates.

A second layer of the paper takes these ingredients and reorganises them
into a coherent cochain-potential language with unified notation.
The key organisational contribution is the two-stage gluing construction
(Definition~\ref{def:half-edge-crossing}, Lemma~\ref{lem:gluing}), which
defines bar-crossing functions tile-by-tile \emph{before} assuming global
validity; this device allows Proposition~\ref{prop:height-potential} and
Theorem~\ref{thm:matching-conservation} to package the classical
equivalences for candidate tilings in a non-circular way, since the
global cochain need not exist a priori.
Theorem~\ref{thm:five-family} then assembles all five height functions into a single joint reconstruction formula that identifies de~Bruijn's strip indices with the Ammann heights, providing a self-contained unified notation for both descriptions.
The conservation-law reading of matching rules follows from applying the
cycle-closure characterisation of
Conway--Lagarias~\cite{ConwayLagarias1990} to the P2 setting, and the
recognition gap (\S\ref{sec:recognition-gap}) names the standard
cohomological observation that $H^1(X_{\mathcal{T}};\ZZ)=0$ while
$H^1_{PE}\cong\ZZ^5$, making precise the sense in which each bar-crossing
cochain is locally readable yet globally non-trivial.
Corollary~\ref{cor:standard-cf} for the Ammann--Beenker tiling provides a
second independent example confirming that the same cochain vocabulary
extends beyond Penrose without modification.

The main contributions of this paper may be summarised as follows.
First, it gives a self-contained cochain-first formulation of Penrose
matching rules, built to handle \emph{candidate} (possibly invalid) decorated
tilings rigorously via the half-edge/gluing construction
(Definition~\ref{def:half-edge-crossing}, Lemma~\ref{lem:gluing}).
Second, it packages the classical Penrose correspondences into explicit
graph-theoretic statements: the four-way equivalence for candidate tilings
(Theorem~\ref{thm:matching-conservation}) and the five-family validity and
reconstruction theorem (Theorem~\ref{thm:five-family}) in a unified notation.
Third, it introduces the conservation-forced framework
(Definition~\ref{def:conservation-forced}) and proves that primitive-substitution
CPTs are conservation-forced with conservation rank equal to the ambient lattice
dimension (Theorem~\ref{thm:cpt-conservation-forced}), thereby establishing the
implication \textup{(b)$\Rightarrow$(a)} of Conjecture~\ref{conj:cf} for the CPT
class.

\subsection{Organization}
\label{sec:organization}

Section~\ref{sec:background} establishes the background on Penrose tilings
and canonical projection tilings, including candidate decorated tilings and
local Ammann bar data.
Section~\ref{sec:discrete-potential} develops discrete potential theory on
graphs.
Section~\ref{sec:main} builds the full tiling--potential correspondence:
the half-edge/gluing construction, the two central equivalence results
(Proposition~\ref{prop:height-potential} and Theorem~\ref{thm:matching-conservation}),
the five-family validity and pentagrid reconstruction theorem,
the cohomological interpretation, and a validation remark.
Section~\ref{sec:conservation-forced} introduces the notion of a
\emph{conservation-forced} tiling system, proves the general CPT theorem
(Theorem~\ref{thm:cpt-conservation-forced}), verifies the definition for
Penrose P2 (rank~$5$), Ammann--Beenker (rank~$4$), the Fibonacci chain
(rank~$2$), and the icosahedral tiling (rank~$6$), states the recognition gap
$\mathcal{R}(\mathcal{T})\cong\ZZ^N$, and poses the classification conjecture.
Section~\ref{sec:discussion} provides a detailed comparison
with Thurston~\cite{Thurston1990} and
Conway--Lagarias~\cite{ConwayLagarias1990}, and lists open problems.
Section~\ref{sec:conclusion} concludes.
Appendix~\ref{app:tile-calcs} gives explicit tile-boundary calculations.
Appendix~\ref{app:golden} records the role of $\phiGR$ as the Perron--Frobenius eigenvalue of the Robinson substitution, its connection to the substitution-entropy picture, and a coherence-hierarchy ordering of the four verified CPT families by their Perron--Frobenius eigenvalues.

\section{Background: Penrose Tilings and Canonical Projection Tilings}
\label{sec:background}

This section develops the background material for Sections~\ref{sec:discrete-potential}--\ref{sec:conservation-forced}.
It is organised as follows.
\S\ref{sec:assumptions} fixes the standing conventions used throughout the paper.
\S\ref{sec:penrose-background} recalls the Penrose P2 dart--kite prototiles, the matching rules, Ammann bars, and the Ammann height function.
\S\ref{sec:candidate-tiling} introduces candidate decorated tilings, formalises local bar data for tilings that may violate the matching rules, and defines the tiling adjacency graph~$G_{\mathcal{T}}$ that carries all cochains of Section~\ref{sec:discrete-potential}.
\S\ref{sec:standing-facts} collects the geometric properties of Ammann bars used in the half-edge/gluing construction of Section~\ref{sec:main}.
\S\ref{sec:cpt-background} extends the framework to canonical projection tilings, setting up the conservation-forced analysis of Section~\ref{sec:conservation-forced}.

Before introducing the objects studied in this section, we fix the standing conventions and scope restrictions used throughout the paper.

\subsection{Standing assumptions and scope conventions}
\label{sec:assumptions}

To prevent hidden hypotheses
from being used implicitly, we adopt the following standing conventions.
\begin{enumerate}[label=\textup{(\alph*)},noitemsep]
\item \textbf{Penrose model.} Unless stated otherwise, ``Penrose tiling'' means the P2
(dart--kite) system with the standard edge decorations and Ammann bars as in
\S\ref{sec:penrose-background}.
\item \textbf{Candidate vs.\ valid.} ``Candidate decorated tiling'' means Definition~\ref{def:candidate-tiling}
(decorations are present but may violate matching rules). Statements that require validity
explicitly assume matching rules (or invoke Theorem~\ref{thm:matching-conservation}).
\item \textbf{Graph conventions.} All cochains on tilings are functions on the directed-edge set
\(\vec{E}\) of the tiling adjacency graph \(G_{\mathcal{T}}\) (Definition~\ref{def:tiling-recog-graph}).
\item \textbf{Cut-and-project meaning.} ``Canonical projection tiling (CPT)'' always means
Definition~\ref{def:cpt}, including \emph{injectivity} and the \emph{generic window} condition.
When we appeal to the Forrest--Hunton--Kellendonk rank/basis result, the generic-window
hypothesis is the relevant input.
\item \textbf{Substitution scope.} The general conservation-forced theorem
(Theorem~\ref{thm:cpt-conservation-forced}) is proved for \emph{primitive-substitution} CPTs.
Whenever ``Pisot'' is mentioned, it is as a potential classification hypothesis in the open
direction (Conjecture~\ref{conj:cf}), not as an input to the proved direction.
\end{enumerate}

\subsection{Prototiles, matching rules, and Ammann bars}
\label{sec:penrose-background}

We work with the dart--kite (P2) formulation~\cite{Penrose1974,BaakeGrimm2013},
where each prototile is a quadrilateral with edges of length~$1$ (short) and
$\phiGR = (1+\sqrt{5})/2$ (long).
The Robinson triangle decomposition~\cite{Robinson1975,BaakeGrimm2013} splits
each dart or kite into golden triangles~$T$
(isosceles, $36^\circ$-$72^\circ$-$72^\circ$) and golden gnomons~$G$
(isosceles, $108^\circ$-$36^\circ$-$36^\circ$).
These two similarity types satisfy the primitive Robinson triangle substitution
\begin{equation}
  T \;\to\; T + G, \qquad G \;\to\; T,
\label{eq:substitution}
\end{equation}
whose tile-count dynamics are encoded by the Fibonacci matrix
\begin{equation}
  M = \begin{pmatrix} 1 & 1 \\ 1 & 0 \end{pmatrix},
  \quad \text{with eigenvalues}\quad
  \lambda_1 = \phiGR,\quad \lambda_2 = -1/\phiGR.
\label{eq:subst-matrix}
\end{equation}
The Perron--Frobenius eigenvalue $\phiGR$ governs the exponential growth of the tile count under iterated substitution: after $n$ steps the total grows as $\phiGR^n$.
The substitution structure and the associated matrix~$M$ are background material here; they become essential in Section~\ref{sec:conservation-forced}, where the Perron--Frobenius eigenvalue is the key invariant distinguishing conservation-forced tiling systems.

Each prototile edge carries arrow and arc decorations enforcing the
\emph{matching rules}: two tiles may share an edge only if their decorations
agree, and these local constraints forbid periodic arrangements and enforce
global aperiodicity~\cite{Penrose1974}.

For a valid tiling, the decorations also determine five families of
\emph{Ammann bars}---sets of parallel straight lines crossing the tiling
continuously~\cite{GrunbaumShephard1987,BaakeGrimm2013}.
Each family has a fixed direction (one of five related by $72^\circ$ rotations, illustrated in Figure~\ref{fig:five-normals}),
and the perpendicular spacings between consecutive bars form a Fibonacci
quasiperiodic sequence with short gap $S$ and long gap $L = \phiGR\,S$.

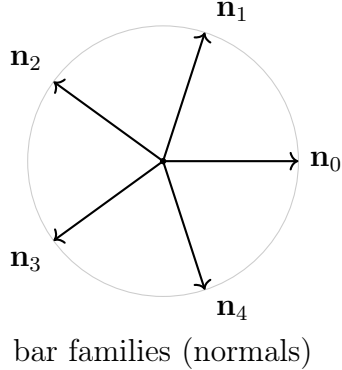
\begin{figure}[H]
\centering
\begin{tikzpicture}[scale=1.0]
  \draw[gray!40] (0,0) circle (1.8);
  \draw[->,thick] (0,0) -- (0:1.8) node[anchor=west] {$\mathbf{n}_0$};
  \draw[->,thick] (0,0) -- (72:1.8) node[anchor=south west] {$\mathbf{n}_1$};
  \draw[->,thick] (0,0) -- (144:1.8) node[anchor=south east] {$\mathbf{n}_2$};
  \draw[->,thick] (0,0) -- (216:1.8) node[anchor=north east] {$\mathbf{n}_3$};
  \draw[->,thick] (0,0) -- (288:1.8) node[anchor=north west] {$\mathbf{n}_4$};
  \fill (0,0) circle (1.2pt);
  \node[below] at (0,-2.2) {bar families (normals)};
\end{tikzpicture}
\caption{The five Ammann bar families for Penrose P2 can be indexed by
unit normals $\mathbf{n}_k$ related by $72^\circ$ rotation.  A ``bar family
direction'' means the direction of the bar lines; its normal $\mathbf{n}_k$
is used to assign signed crossing numbers to directed edges.}
\label{fig:five-normals}
\end{figure}

A classical result~\cite{BaakeGrimm2013} establishes that the matching rules
are satisfied if and only if Ammann bars form continuous straight lines
(bars cannot terminate inside the tiling; see Definition~\ref{def:local-global-bars}
below for the formal definition of bar continuity).
Fixing one bar family with unit normal $\mathbf{n}$, the
\emph{Ammann height function} $h\colon V \to \ZZ$ assigns to each vertex $v$
the signed number of bars between a reference vertex $v_0$ and $v$, counted
with respect to $\mathbf{n}$~\cite{GrunbaumShephard1987}; the height difference
across each edge encodes whether the edge crosses a bar and in which direction.

\subsection{Candidate decorated tilings and local bar data}
\label{sec:candidate-tiling}

The following definitions make bar data meaningful for
\emph{candidate} tilings that may violate the matching rules---providing the
vocabulary needed for Theorem~\ref{thm:matching-conservation} and
Remark~\ref{rem:algorithm}---and introduce the adjacency graph that carries
the cochains of Section~\ref{sec:discrete-potential}.

\begin{definition}[Candidate decorated Penrose tiling]
\label{def:candidate-tiling}
A \emph{candidate (decorated) Penrose tiling} is an edge-to-edge tiling
$\mathcal{T}$ of $\RR^2$ by dart and kite prototiles, where each tile carries the
standard Penrose edge decorations (arrows/arcs), but adjacent tiles are not
assumed to satisfy the matching rules (decorations on a shared edge may disagree).
\end{definition}

\begin{definition}[Local and global Ammann bars]
\label{def:local-global-bars}
Fix one of the five Ammann bar family directions, specified by a unit normal
vector $\mathbf{n}$.
Each decorated prototile determines a finite collection of straight \emph{local bar segments} inside the tile, all lying on lines with normal $\mathbf{n}$ (equivalently, all segments are parallel to the bar direction)~\cite{GrunbaumShephard1987,BaakeGrimm2013}.
For a candidate tiling $\mathcal{T}$, let $\mathcal{B}_{\mathbf{n}}(\mathcal{T})$
denote the set of connected components of the union of all such segments over
tiles of $\mathcal{T}$.
Elements of $\mathcal{B}_{\mathbf{n}}(\mathcal{T})$ are the (possibly broken)
\emph{Ammann bars} of family~$\mathbf{n}$.
We say that the $\mathbf{n}$-bar family is \emph{continuous} if every component
of $\mathcal{B}_{\mathbf{n}}(\mathcal{T})$ is an entire straight line in~$\RR^2$
(equivalently, has no endpoints in the interior of~$\RR^2$).
\end{definition}

The candidate tiling and its bar data live on a natural combinatorial structure: tiles that share a tile edge are declared adjacent, giving the following graph.

\phantomsection\label{sec:recog-graph}
\begin{definition}[Tiling adjacency graph]
\label{def:tiling-recog-graph}
Let $\mathcal{T}$ be a Penrose tiling with vertex set $V$ and edge set~$E$.
The \emph{tiling adjacency graph} is $G_{\mathcal{T}} = (V,E)$, where
$(u,v) \in E$ whenever $u$ and $v$ are connected by a tile edge.
\end{definition}

\begin{remark}[Basic graph properties]
\label{rem:graph-properties}
For an edge-to-edge locally finite tiling $\mathcal{T}\subset\RR^2$ by polygonal
tiles, the adjacency graph $G_{\mathcal{T}}$ is connected and planar: it is the
$1$-skeleton of the tiling CW complex embedded in the plane.  See, e.g.,
Gr\"{u}nbaum--Shephard~\cite{GrunbaumShephard1987} or Baake--Grimm~\cite{BaakeGrimm2013}.
\end{remark}

\subsection{Geometric properties of Ammann bars}
\label{sec:standing-facts}

The following standard properties of Penrose P2 decorations and Ammann bars
are used in \S\ref{sec:main}; we refer to
Baake--Grimm~\cite[Ch.~6]{BaakeGrimm2013} for concrete conventions and
diagrams.

\begin{lemma}[Local bar segments are decoration-determined]
\label{lem:local-bars-determined}
Fix a bar-family direction (equivalently, a bar normal $\mathbf{n}$).
For each oriented Penrose prototile, the standard edge decorations
determine a finite set of straight \emph{local} Ammann bar segments inside
the tile, in the sense of Definition~\ref{def:local-global-bars}.  In
particular, for a candidate decorated tiling (Definition~\ref{def:candidate-tiling}),
local bar data are readable tile-by-tile without any global validity
assumption.
\end{lemma}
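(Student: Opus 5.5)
The plan is to argue directly from the construction of Ammann bars on prototiles, as a statement about finitely many labelled reference tiles followed by transport under isometries. First, fix the two decorated prototiles (dart and kite) in a reference position. For each, the standard convention (Gr\"{u}nbaum--Shephard, Baake--Grimm Ch.~6) draws a fixed finite collection of straight segments inside the tile. Each segment is specified by its endpoints on the tile boundary, and these points are given as fixed fractions of the edge lengths measured from decoration-marked vertices. The arrow/arc decorations single out these vertices and orient the edges, so they fix the tile's orientation up to its symmetry. Because the tile's symmetry preserves the decorations, the segment set is independent of any choice. I would record this as a finite table: tile type, decoration-marked vertex, and segments.

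Second, handle orientation and the chosen family. An oriented prototile in a candidate tiling is the image of a reference tile under a rigid motion $g$ that carries the reference decorations to the given ones. The decorations fix $g$ uniquely, since a decorated dart or kite has trivial orientation-preserving decorated symmetry, and in any case its reflection symmetry preserves the segment set. So the local segments in that tile are $g$ applied to the table entries, and they depend only on the tile's own decorated shape and position. In a P2 tiling the edge directions lie among the tenfold star, so $g$ rotates by a multiple of $36^\circ$. The segment directions therefore lie among the five bar directions. Fixing a normal $\mathbf{n}$, I select those transported segments whose direction is perpendicular to $\mathbf{n}$, which is again a finite set, possibly empty for some tile orientations.

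Third, deduce the candidate-tiling clause. The recipe above takes a single tile with its own decorations as input and never references neighbours or matching. So for any candidate tiling (Definition~\ref{def:candidate-tiling}) the union over tiles is well defined, and $\mathcal{B}_{\mathbf{n}}(\mathcal{T})$ in Definition~\ref{def:local-global-bars} makes sense without any validity assumption.

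The main obstacle is the first step: making ``the standard decoration determines the segments'' precise rather than citing a picture. I would try to give explicit coordinates for the kite and dart segments, using the golden-ratio positions from Baake--Grimm. I would then check that the segment set is invariant under the tile's reflection symmetry, so that the transport in the second step is well defined.
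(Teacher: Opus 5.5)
Your proposal is correct, but it takes a more constructive route than the paper. The paper's proof is a two-line observation. Definition~\ref{def:local-global-bars} already stipulates, citing Gr\"{u}nbaum--Shephard and Baake--Grimm, that each decorated prototile carries a finite set of local bar segments. A candidate tiling assigns decorations tile by tile, so each tile's segments are defined with no reference to neighbours or to the matching rules. In effect the paper takes ``decoration determines segments'' as imported input and proves only the locality clause, which is your third step.

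Your first two steps instead justify that input:
\begin{itemize}
\item a reference table of segments for the two decorated prototiles;
\item transport by the rigid motion $g$ that carries the reference tile to the placed one;
\item filtering by direction to extract the family-$\mathbf{n}$ segments.
\end{itemize}
The transport is well defined because a dart or kite has trivial orientation-preserving symmetry, and its mirror symmetry preserves both decorations and segments. This buys an actual argument for well-definedness and finiteness.

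It also makes explicit a point the paper's definition assumes silently. In any connected edge-to-edge candidate tiling, all interior angles are multiples of $36^\circ$, so all tiles differ by rotations through multiples of $36^\circ$. Hence the transported segments land in the five fixed family directions, once the tiling's global orientation is normalised to match the $\mathbf{n}_k$.

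The explicit-coordinate step you flag as the main obstacle still rests on the same literature the paper cites. If you only need the lemma as stated, citing it is enough. One simplification: the tile's shape alone already fixes $g$ up to its mirror symmetry, so you do not need the decorations to pin $g$ down.
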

\begin{proof}
By Definition~\ref{def:local-global-bars}, each decorated prototile determines
a finite collection of local bar segments from its decoration data alone, with
no reference to adjacent tiles.
Since a candidate tiling assigns decorations tile-by-tile
(Definition~\ref{def:candidate-tiling}), the local bar segments of every tile
are defined independently of whether the global matching rules are satisfied.
\end{proof}

\begin{remark}[Transversality and uniqueness properties]
\label{rem:transversality}
For Penrose P2 tilings with the standard Ammann bar directions, the
following geometric properties hold (see~\cite[Ch.~6]{BaakeGrimm2013}):
\begin{enumerate}[label=(\roman*),noitemsep]
\item No bar of a given family is parallel to any tile edge, and no bar
      passes through a tiling vertex.
\item For each tile and each bar family, each tile edge is crossed by at
      most one local bar segment of that family.
\item The five bar families correspond to the five edge-direction classes of P2: each edge direction class determines a unique bar family whose bars cross edges of that direction.
\item For the unique relevant family associated with an edge-direction
      class, every edge of that class is crossed by a local bar segment of
      that family (hence the relevant tile-side crossing value is always
      $\pm 1$, never $0$).
\end{enumerate}
These properties justify the $\{-1,0,+1\}$-valued crossing conventions in
Definition~\ref{def:half-edge-crossing} and the ``unique relevant family''
language in Lemma~\ref{lem:gluing}.
\end{remark}

\subsection{Canonical projection tilings}
\label{sec:cpt-background}

The potential-theoretic framework extends beyond Penrose tilings to the
entire class of \emph{canonical projection tilings (CPTs)}; we introduce
this class here to set up Section~\ref{sec:conservation-forced}.

\begin{definition}[Canonical projection tiling]
\label{def:cpt}
Let $N > d \geq 1$ be integers, $\Lambda \subset \RR^N$ a full-rank
lattice (usually $\ZZ^N$), and $\pE \subset \RR^N$ a $d$-dimensional
subspace (the \emph{physical space}) such that the projection
$\pi\colon\RR^N \to \pE$ satisfies:
\begin{enumerate}[label=(\roman*),noitemsep]
\item \emph{Irrationality:} $\pi(\Lambda)$ is dense in $\pE$.
\item \emph{Injectivity:} $\pi|_{\Lambda}$ is injective (equivalently,
      $\Lambda \cap \pEperp = \{\mathbf{0}\}$).
\item \emph{Generic window:} the acceptance window
      $W\subset\pEperp := (\pE)^\perp$ is a compact polytope whose
      boundary meets no projected lattice point, i.e.\
      $\partial W \cap \pi_\perp(\Lambda) = \varnothing$.
\end{enumerate}
The \emph{vertex set} of the CPT is
$V = \{ \pi(\mathbf{x}) : \mathbf{x}\in\Lambda,\; \pi_\perp(\mathbf{x})\in W \}$,
where $\pi_\perp\colon\RR^N\to\pEperp$ is the complementary projection.
The tiles are the images under $\pi$ of the Voronoi cells of $\Lambda$
whose $\pi_\perp$-projections intersect~$W$.
\end{definition}

The five standard CPT families studied in this paper are collected
in Table~\ref{tab:cpt-families}.

\begin{table}[ht]
\centering
\caption{Standard CPT families.  Here $N$ is the ambient lattice
  dimension and $d$ the physical dimension.
  The column $\lambda_{\mathrm{PF}}$ is the Perron--Frobenius eigenvalue of the substitution.}

\label{tab:cpt-families}
\small
\begin{tabular}{llccc}
\toprule
\textbf{Tiling} & \textbf{Prototiles} & $d$ & $N$ & $\lambda_{\mathrm{PF}}$ \\
\midrule
Fibonacci chain         & S, L segments          & 1 & 2 & $\phiGR \approx 1.618$ \\
Penrose P2/P3           & dart--kite / thick--thin rhombi & 2 & 5 & $\phiGR \approx 1.618$ \\
Ammann--Beenker         & square, $45^\circ$ rhombus & 2 & 4 & $1+\sqrt{2} \approx 2.414$ \\
Icosahedral Ammann      & two rhombohedra        & 3 & 6 & $\phiGR \approx 1.618$ \\
\bottomrule
\end{tabular}
\end{table}

\begin{definition}[Lattice-coordinate map]
\label{def:lattice-coord}
For a CPT as in Definition~\ref{def:cpt}, the \emph{lattice-coordinate map}
is
\[
  \Phi\colon V \longrightarrow \ZZ^N,\qquad
  \Phi(v) := \mathbf{x}(v),
\]
where $\mathbf{x}(v)\in\Lambda$ is the unique lattice point with
$\pi(\mathbf{x}(v)) = v$ and $\pi_\perp(\mathbf{x}(v))\in W$.
Uniqueness follows from injectivity of $\pi|_{\Lambda}$ in
Definition~\ref{def:cpt}.
For the standard lattice $\Lambda = \ZZ^N$, write
$\Phi(v) = (x_1(v),\ldots,x_N(v))$, and define the
\emph{$k$-th bar-crossing cochain} by
\begin{equation}
  \Delta_k(u\to v) \;:=\; x_k(v) - x_k(u),
  \qquad k = 1,\ldots,N.
\label{eq:lattice-cochain}
\end{equation}
\end{definition}

\begin{remark}[Gradient structure]
\label{rem:gradient-cpt}
Since $\Delta_k$
is defined as a coordinate difference $x_k(v)-x_k(u)$, any sum around
a closed loop telescopes to zero; Lemma~\ref{lem:face-sum-general} makes this precise.
\end{remark}

\begin{lemma}[Generalised face-sum vanishing for CPT $2$-cells]
\label{lem:face-sum-general}
Let $\mathcal{T}$ be a CPT in $\RR^d$ (Definition~\ref{def:cpt}) arising from
a lattice $\ZZ^N$, and let $\Delta_k$ be the $k$-th bar-crossing cochain
defined by~\eqref{eq:lattice-cochain}.
Then for every $2$-cell $f$ of the tiling CW complex $X_{\mathcal{T}}$ (equivalently,
for every polygonal $2$-face occurring in $\mathcal{T}$) and every $k\in\{1,\ldots,N\}$,
\begin{equation}
  \sum_{(u\to v)\in\partial f} \Delta_k(u\to v) = 0,
\label{eq:face-sum-general}
\end{equation}
where $\partial f$ is traversed with its induced boundary orientation.
\end{lemma}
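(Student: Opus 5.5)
The plan is to reduce the face-sum to a telescoping sum of a vertex function around a closed cycle. The only subtlety is making sure that $\Delta_k$ really is the coboundary of a single-valued vertex function on the whole vertex set $V$, and that the boundary of a $2$-cell is a closed edge-path in $G_{\mathcal{T}}$.

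\emph{Step 1 (well-definedness of the potential).} By Definition~\ref{def:lattice-coord}, every vertex $v\in V$ has a unique lattice preimage $\mathbf{x}(v)\in\ZZ^N$ with $\pi_\perp(\mathbf{x}(v))\in W$; uniqueness uses the injectivity hypothesis (ii) of Definition~\ref{def:cpt}. Hence each coordinate $x_k\colon V\to\ZZ$ is a genuine single-valued function. Consequently~\eqref{eq:lattice-cochain} says exactly that $\Delta_k=\delta x_k$ is a coboundary: for every directed edge $u\to v$ of $G_{\mathcal{T}}$ we have $\Delta_k(u\to v)=x_k(v)-x_k(u)$. In particular $\Delta_k$ is antisymmetric, $\Delta_k(v\to u)=-\Delta_k(u\to v)$, so the value of the face-sum does not depend on how one labels an edge, only on its traversal direction.

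\emph{Step 2 (boundary is a closed path).} Let $f$ be a $2$-cell of $X_{\mathcal{T}}$. Then $f$ is a convex polygon, namely the projection of a $2$-face of a lattice Voronoi cell (Definition~\ref{def:cpt}). Its boundary with the induced orientation is a cyclic sequence of vertices $v_0,v_1,\dots,v_m=v_0$ in $V$, with consecutive vertices joined by tile edges, i.e.\ by edges of $G_{\mathcal{T}}$. So $\partial f=\sum_{i=0}^{m-1}(v_i\to v_{i+1})$.

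\emph{Step 3 (telescoping).} Summing over the boundary gives
\[
  \sum_{(u\to v)\in\partial f}\Delta_k(u\to v)=\sum_{i=0}^{m-1}\bigl(x_k(v_{i+1})-x_k(v_i)\bigr)=x_k(v_m)-x_k(v_0)=0 .
\]
This is precisely the cochain identity $\langle \delta x_k,\partial f\rangle=\langle x_k,\partial\partial f\rangle=0$, and it holds for every $k\in\{1,\dots,N\}$.

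The main obstacle is Step 2. One must make sure that the vertices on $\partial f$ are actually in $V$, that is, that they are projections of lattice points whose $\pi_\perp$-images lie in $W$, so that $x_k$ is defined there. One must also rule out extra vertices of neighbouring tiles lying in the interior of an edge of $f$. I would handle this by invoking the edge-to-edge property of CPTs, which holds because the tiles are projections of faces of the lattice cell complex. I would also note that the generic-window hypothesis (iii) excludes degenerate configurations. Even if subdividing vertices did occur, the path would still be closed and the telescoping argument would apply verbatim.
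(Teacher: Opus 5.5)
Your proposal is correct and follows the paper's proof. In both, $\Delta_k=\delta_0 x_k$ with $x_k$ single-valued on $V$ by injectivity of $\pi|_\Lambda$, and the boundary sum around $f$ telescopes to zero. The extra care in your Step 2 is harmless but not really needed: the $2$-cells of $X_{\mathcal{T}}$ are by construction attached along closed walks in the $1$-skeleton $G_{\mathcal{T}}$, whose vertices already lie in $V$.
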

\begin{proof}
By Remark~\ref{rem:gradient-cpt}, $\Delta_k(u\to v) = x_k(v)-x_k(u)$ is a coordinate difference.
The boundary sum therefore telescopes:
\[
  \sum_{(u\to v)\in\partial f} \Delta_k(u\to v)
  = \sum_{(u\to v)\in\partial f} \bigl[x_k(v)-x_k(u)\bigr] = 0,
\]
since each vertex of $f$ appears once as a head and once as a tail as $\partial f$
is traversed.
\end{proof}

\begin{remark}[Comparison with the Penrose geometric argument]
\label{rem:comparison-tile-sum}
Lemma~\ref{lem:face-sum-general} gives a purely algebraic proof of face-sum
vanishing that holds for all CPT dimensions and all lattice-coordinate cochains,
showing the cocycle condition is a \emph{formal consequence} of the gradient
structure $\Delta_k = \delta_0 x_k$ rather than a special geometric property of
the prototile family.
The same vanishing for Penrose P2 is established by a geometric transversality
argument in Lemma~\ref{lem:tile-sum-zero} (\S\ref{sec:bar-postings}).
\end{remark}

\begin{lemma}[Local determinacy of lattice steps]
\label{lem:cpt-local-step}
Let $\mathcal{T}$ be a CPT with \emph{finite local complexity} (FLC: up to translation, only finitely many distinct radius-$R$ patches exist for each $R > 0$), with lattice-coordinate map $\Phi\colon V\to\ZZ^N$ (Definition~\ref{def:lattice-coord}).
Then, there exists $R>0$ such that for every directed edge $(u\to v)$ in
$G_{\mathcal{T}}$, the lattice difference vector $\Phi(v)-\Phi(u)\in\ZZ^N$
is determined by the radius-$R$ patch of $\mathcal{T}$ around the directed
edge $(u\to v)$.  In particular, for each $k=1,\ldots,N$, the cochain
$\Delta_k(u\to v)=x_k(v)-x_k(u)$ is pattern-equivariant.
\end{lemma}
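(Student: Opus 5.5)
The plan is to show that every edge vector $v-u$ of the tiling is the projection of a \emph{standard} lattice step, and then to use injectivity to read off $\Phi(v)-\Phi(u)$ from the geometry of that edge alone. In fact we expect $R$ can be taken just larger than the maximal edge length, so that the patch contains the edge itself.

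\emph{Step 1: reduce to a finite set of edge vectors.} By FLC there are only finitely many tile shapes up to translation. Hence the set $D=\{v-u : (u\to v)\in\vec E\}\subset\pE$ of directed edge displacement vectors is finite. Choose $R$ larger than $\max_{d\in D}|d|$. Then the radius-$R$ patch around $(u\to v)$ determines $v-u$, because both endpoints lie in the patch.

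\emph{Step 2: the displacement determines the lattice difference.} Set $\mathbf{m}=\Phi(v)-\Phi(u)\in\ZZ^N$. By Definition~\ref{def:lattice-coord}, $\pi(\Phi(v))=v$ and $\pi(\Phi(u))=u$. By linearity of $\pi$ this gives $\pi(\mathbf{m})=v-u$. Since $\pi|_{\ZZ^N}$ is injective (Definition~\ref{def:cpt}(ii)), $\mathbf{m}$ is the \emph{unique} lattice vector projecting to $v-u$. So $\mathbf{m}=(\pi|_{\ZZ^N})^{-1}(v-u)$ is a function of $v-u$ alone, and therefore of the radius-$R$ patch. Taking coordinates shows that each $\Delta_k(u\to v)=m_k$ is pattern-equivariant with radius $R$: two directed edges whose radius-$R$ patches agree up to translation have equal $v-u$, and hence equal $\Delta_k$.

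\emph{Main obstacle.} The delicate point is Step~2's implicit claim that $v-u\in\pi(\ZZ^N)$ with the preimage taken in $\ZZ^N$ rather than merely in $\RR^N$. This is not automatic for an arbitrary vector in $\pE$, but it holds here because both endpoints are themselves projected lattice points. No further structure is needed: we do not need that tile edges correspond to unit vectors $\pm\mathbf{e}_k$, although for Voronoi/standard CPTs this is also true and would give $\Delta_k\in\{-1,0,1\}$ as a bonus.

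A subtlety worth flagging is that $(\pi|_{\ZZ^N})^{-1}$ is not continuous, because $\pi(\ZZ^N)$ is dense by Definition~\ref{def:cpt}(i). The argument therefore genuinely needs exact equality of displacement vectors up to translation, which is what FLC and "patch agreement up to translation" provide. It does not need approximate equality.
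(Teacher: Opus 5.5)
Your proof is correct, and it takes a genuinely different and shorter route than the paper's. The paper argues through the cell structure. It treats each tile as the $\pi$-image of a lattice face and places all edge steps in a fixed finite set $S$ of adjacency vectors. It then attaches a fixed element of $S$ to each prototile type and edge position, and uses a patch of radius exceeding the prototile diameter to recognise which prototile and edge position occur.

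You bypass all of that. Linearity gives $\pi(\Phi(v)-\Phi(u))=v-u$, so injectivity of $\pi|_{\ZZ^N}$ makes $\Phi(v)-\Phi(u)$ the unique lattice preimage of the displacement $v-u$. That displacement is identical for any two directed edges with translation-equivalent patches, and FLC is needed only for a uniform bound on edge length.

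Your version buys generality and transparency. It does not depend on how tiles are described as projected cells. It also makes explicit the injectivity input that the paper uses only tacitly: the claim in the paper's Step~2 that each prototile type is the image of a \emph{specific} face type, so that each edge position carries a single lattice step, is exactly where injectivity is needed. In fact your argument shows the lattice step is a function of the edge vector alone, so nothing from the surrounding patch is required.

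The paper's route buys the extra structural fact that all steps lie in a fixed finite set dictated by the lattice cell structure. This is the kind of information the paper later invokes when it asserts $\Phi(v)-\Phi(u)\in\{\pm\mathbf{e}_k\}$, and hence $\Delta_k\in\{0,\pm1\}$, for the icosahedral tiling. In your approach that appears only as the optional bonus you mention.
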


\begin{proof}
We show that the lattice step $\Phi(v)-\Phi(u)\in\ZZ^N$ across any
directed edge $(u\to v)$ is determined by a bounded neighbourhood of
the edge.

\emph{Step~1 (finitely many lattice steps).}
By Definition~\ref{def:cpt}, each tile
$t\in\mathcal{T}$ is the image under~$\pi$ of a $d$-dimensional face of
a Voronoi cell of $\Lambda=\ZZ^N$ in~$\RR^N$.
If $t = \pi(F)$ for a Voronoi face~$F$ with vertices
$\mathbf{x}_0,\ldots,\mathbf{x}_m\in\ZZ^N$, then the vertices of~$t$
are $\pi(\mathbf{x}_0),\ldots,\pi(\mathbf{x}_m)$, and each edge of~$t$
has the form $\{\pi(\mathbf{x}_i),\pi(\mathbf{x}_j)\}$ where
$\mathbf{x}_j - \mathbf{x}_i$ is a \emph{Voronoi-adjacency vector}: a
difference between two lattice points whose Voronoi cells share a face.
The set of Voronoi-adjacency vectors of~$\ZZ^N$ is a fixed finite set~$S$
depending only on the lattice (not on the tiling), so
$\Phi(v)-\Phi(u)\in S$ for every directed edge $(u\to v)$.

\emph{Step~2 (prototile type determines lattice step).}
Each prototile type of~$\mathcal{T}$ is the $\pi$-image of a specific
Voronoi face type~$F_\alpha$ of~$\ZZ^N$
(there are finitely many such types because~$\ZZ^N$ has finitely many
Voronoi-face classes up to lattice translation).
The boundary edges of~$F_\alpha$ carry specific Voronoi-adjacency
vectors from~$S$; hence, for each prototile type and each edge-position
within that prototile, the lattice step $\Phi(v)-\Phi(u)$ is a fixed
element of~$S$.

\emph{Step~3 (local patch determines prototile and edge-position).}
Finite local complexity implies that, up to translation, there are only
finitely many distinct radius-$R$ edge-centred patches for any
fixed~$R$.
Choose $R$ larger than the diameter of any prototile.
Then the radius-$R$ patch around a directed edge $(u\to v)$ determines
which prototile type(s) are incident to the edge and which boundary
edge of each incident prototile $\{u,v\}$ corresponds to.
By Step~2, this determines $\Phi(v)-\Phi(u)$.

\emph{Step~4 (pattern-equivariance).}
If two directed edges $(u\to v)$ and $(u'\to v')$ have
translation-equivalent radius-$R$ patches, they occupy the same
edge-position within the same prototile type, so Steps~2--3 give
$\Phi(v)-\Phi(u) = \Phi(v')-\Phi(u')$.
Therefore each coordinate difference
$\Delta_k(u\to v) = x_k(v)-x_k(u)$ depends only on the radius-$R$
patch, i.e., is pattern-equivariant.
\end{proof}

\begin{proposition}[Reconstruction formula for CPTs]
\label{prop:cpt-reconstruction}
Let $\mathcal{T}$ be a CPT in $\RR^d$ arising from a lattice $\ZZ^N$
with an orthonormal basis $\{\mathbf{e}_j\}_{j=1}^N$ of $\RR^N$.
Write $\mathbf{e}_j^* := \pi(\mathbf{e}_j) \in \RR^d$ for the projected
basis vectors.  Then for every vertex $v\in V$,
\begin{equation}
  v \;=\; \sum_{k=1}^N x_k(v)\,\mathbf{e}_k^*,
\label{eq:cpt-reconstruction}
\end{equation}
where $x_k(v)$ is the $k$-th lattice coordinate of $\Phi(v)$.
In particular, $v$ is determined by the $N$ integer height functions
$h_k := x_k\colon V \to \ZZ$.
\end{proposition}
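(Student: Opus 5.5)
The identity~\eqref{eq:cpt-reconstruction} should follow from linearity of $\pi$ together with the defining property of $\Phi$ in Definition~\ref{def:lattice-coord}. No geometric input about tiles or windows should be needed beyond that definition.

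Fix $v\in V$. By Definition~\ref{def:cpt}, $v=\pi(\mathbf{x})$ for some $\mathbf{x}\in\ZZ^N$ with $\pi_\perp(\mathbf{x})\in W$. By Definition~\ref{def:lattice-coord} this $\mathbf{x}$ is exactly $\Phi(v)=\mathbf{x}(v)$; uniqueness comes from injectivity of $\pi|_{\ZZ^N}$ (Definition~\ref{def:cpt}(ii)). Next I expand $\mathbf{x}(v)$ in the orthonormal basis, $\mathbf{x}(v)=\sum_{k=1}^N x_k(v)\,\mathbf{e}_k$. Here I identify $\ZZ^N$ with the integer span of $\{\mathbf{e}_k\}$, so the coordinates $x_k(v)$ are integers. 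Since $\pi\colon\RR^N\to\pE\cong\RR^d$ is linear (an orthogonal projection), applying $\pi$ gives
\[
v=\pi(\mathbf{x}(v))=\sum_{k=1}^N x_k(v)\,\pi(\mathbf{e}_k)=\sum_{k=1}^N x_k(v)\,\mathbf{e}_k^*.
\]
This is~\eqref{eq:cpt-reconstruction}.

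The ``in particular'' clause is immediate. The vectors $\mathbf{e}_k^*$ are fixed data of the projection, independent of $v$, so the right-hand side is a function of the tuple $(h_1(v),\dots,h_N(v))$ alone. As a remark, I would also note the converse: the tuple is in turn recovered from $v$ by injectivity, so $\Phi$ is injective. I would also point out that each $h_k$ is a potential for $\Delta_k$, in the sense that $\Delta_k=\delta_0 h_k$ by~\eqref{eq:lattice-cochain}. This ties the formula to the height-function language and to Lemma~\ref{lem:face-sum-general}.

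The only step needing care is the identification of the standard lattice $\ZZ^N$ with the integer span of the chosen orthonormal basis $\{\mathbf{e}_j\}$, so that the coordinates $x_k(v)$ in Definition~\ref{def:lattice-coord} coincide with the expansion coefficients. I would state explicitly that $\{\mathbf{e}_j\}$ is taken to be the standard basis generating $\Lambda=\ZZ^N$. Beyond that the proof is a one-line linearity computation.
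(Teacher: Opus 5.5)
Your proof is correct and is the same argument as the paper's: expand $\Phi(v)$ in the standard basis and apply linearity of $\pi$. The identification of $\ZZ^N$ with the integer span of $\{\mathbf{e}_j\}$ that you flag is exactly the convention the paper uses implicitly. One small slip in your aside: injectivity of $\pi|_{\Lambda}$ is what makes $\Phi$ well defined, while injectivity of $\Phi$ itself follows from the formula you just proved ($\pi\circ\Phi=\mathrm{id}_V$), not from recovering the tuple from $v$.
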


\begin{proof}
By Definition~\ref{def:lattice-coord},
$\Phi(v) = \sum_{k=1}^N x_k(v)\,\mathbf{e}_k \in \ZZ^N \subset \RR^N$.
Applying $\pi$:
\[
  v = \pi(\Phi(v)) = \pi\!\Bigl(\sum_{k=1}^N x_k(v)\,\mathbf{e}_k\Bigr)
  = \sum_{k=1}^N x_k(v)\,\pi(\mathbf{e}_k) = \sum_{k=1}^N x_k(v)\,\mathbf{e}_k^*. \qedhere
\]
\end{proof}

\begin{remark}[Penrose specialisation]
\label{rem:penrose-specialisation}
For Penrose P2/P3 one has $d=2$, $N=5$.
Write $\mathbf{n}_k = (\cos(2\pi k/5),\,\sin(2\pi k/5))$ for the five
unit normals to the Ammann bar families.
The physical-space projection $\pi\colon\RR^5\to\RR^2$ in the standard
Penrose cut-and-project scheme sends the standard basis vectors to
\[
  \mathbf{e}_k^*
  \;=\; \pi(\mathbf{e}_k)
  \;=\; \tfrac{2}{5}\,\mathbf{n}_k
  \;=\; \tfrac{2}{5}\bigl(\cos(2\pi k/5),\;\sin(2\pi k/5)\bigr),
  \qquad k = 0,\ldots,4.
\]
The factor $\tfrac{2}{5}$ arises because the five unit normals satisfy
the resolution-of-identity
$\sum_{k=0}^{4}(\mathbf{n}_k \otimes \mathbf{n}_k) = \tfrac{5}{2}\,I_2$
(a fact verified in Theorem~\ref{thm:five-family}(ii) below):
the linear map $\RR^5\to\RR^2$ given by
$\mathbf{x}\mapsto\sum_k x_k\,\mathbf{n}_k$ has the Gram matrix
$\tfrac{5}{2}\,I_2$, so the metrically correct projection carries the
reciprocal factor~$\tfrac{2}{5}$.
Equation~\eqref{eq:cpt-reconstruction} then gives
\[
  v \;=\; \sum_{k=0}^{4} x_k(v)\,\mathbf{e}_k^*
  \;=\; \frac{2}{5}\sum_{k=0}^{4} h_k(v)\,\mathbf{n}_k,
\]
recovering de~Bruijn's pentagrid formula~\cite{deBruijn1981}.
\end{remark}

With the CPT framework in place, Section~\ref{sec:discrete-potential} develops the abstract algebraic machinery---antisymmetric edge functions, cycle closure, and the discrete Poincar\'{e} lemma---that will be specialised to Penrose tilings in Section~\ref{sec:main} and to general CPTs in Section~\ref{sec:conservation-forced}.

\section{Discrete Potential Theory on Graphs}
\label{sec:discrete-potential}

This section establishes the abstract algebraic backbone used in Sections~\ref{sec:main} and~\ref{sec:conservation-forced}.
Working with an arbitrary connected graph---so that the theory applies both to Penrose tilings in Section~\ref{sec:main} and to canonical projection tilings in Section~\ref{sec:conservation-forced}---we introduce antisymmetric edge functions ($1$-cochains), the cycle-closure condition, and prove the discrete Poincar\'{e} lemma (Theorem~\ref{thm:poincare}): a $1$-cochain arises as the gradient of a scalar potential if and only if it is cycle-closed.
As will be shown in Section~\ref{sec:main}, the signed Ammann bar-crossing count on the tiling adjacency graph $G_{\mathcal{T}}$ (Definition~\ref{def:tiling-recog-graph}) is precisely such a $1$-cochain, and cycle closure is equivalent to the matching rules.
The potential is the classical Ammann height function.

The following definitions are standard in combinatorial Hodge theory and
graph cohomology; see Biggs~\cite{Biggs1974} and (for the tiling-height-function
viewpoint) Thurston~\cite{Thurston1990}.

\begin{definition}[Antisymmetric edge function]
\label{def:antisymmetric}
Let $G = (V,E)$ be a connected graph with vertex set $V$ and (undirected)
edge set $E$.  Write
\[
  \vec{E} := \{\, (u \to v) \;:\; \{u,v\} \in E \,\}
\]
for the set of directed edges (each undirected edge contributes two orientations).
An \emph{antisymmetric edge function} on~$G$ is a function
$\Delta\colon \vec{E} \to \RR$ such that
\begin{equation}
  \Delta(v \to u) = -\,\Delta(u \to v)
  \quad\text{for all } \{u,v\} \in E.
\label{eq:antisymmetry}
\end{equation}
\end{definition}

\begin{remark}[$1$-cochain terminology]
\label{rem:cochain-terminology}
An antisymmetric edge function on $G$ is also called a \emph{$1$-cochain} (or \emph{antisymmetric $1$-cochain}) on $G$: it is precisely a function on oriented edges satisfying the sign-reversal condition~\eqref{eq:antisymmetry}, i.e., an element of the cellular cochain group $C^1(G;\RR)$.
The cycle-closure condition (Definition~\ref{def:cycle-closure} below) is then exactly the condition that $\Delta$ is a \emph{cocycle}---$\delta^1 \Delta = 0$ in the cellular coboundary sequence---making the discrete Poincar\'{e} lemma (Theorem~\ref{thm:poincare}) a statement about the first cohomology group $H^1(G;\RR)$ being trivial for connected graphs.
This is the algebraic identity underlying the paper's central claim that matching rules are cocycle conditions.
\end{remark}

\begin{definition}[Cycle closure]
\label{def:cycle-closure}
An antisymmetric edge function $\Delta$ satisfies \emph{cycle closure} if for every
directed closed walk $\gamma = (v_0 \to v_1 \to \cdots \to v_k = v_0)$ in~$G$
(vertices may repeat),
\begin{equation}
  \sum_{i=0}^{k-1} \Delta(v_i \to v_{i+1}) = 0.
\label{eq:cycle-closure}
\end{equation}
\end{definition}

\begin{example}[Toy illustration of cycle closure and potentials]
\label{ex:toy-potential}
Consider a square graph with four vertices $a,b,c,d$ (in order) and
edges $\{a,b\},\{b,c\},\{c,d\},\{d,a\}$.  Define the antisymmetric
edge function
\[
  \Delta(a\to b)=+1,\quad \Delta(b\to c)=0,\quad
  \Delta(c\to d)=-1,\quad \Delta(d\to a)=0.
\]
The closed walk $a\to b\to c\to d\to a$ sums to
$(+1)+0+(-1)+0=0$: cycle closure holds.
Setting $h(a)=0$ one can recover $h(b)=1$, $h(c)=1$, $h(d)=0$
so that $\Delta(u\to v)=h(v)-h(u)$ on every edge.
In \S\ref{sec:main} the same arithmetic will appear with
$\Delta = \Delta_{\mathbf{n}}$ (the bar-crossing function)
and $h = h_{\mathbf{n}}$ (the Ammann height).
The following theorem formalises this.
\end{example}

\begin{theorem}[Discrete Poincar\'{e} lemma]
\label{thm:poincare}
Let $\Delta$ be an antisymmetric edge function on a connected graph $G=(V,E)$.
Then $\Delta$ satisfies cycle closure if and only if there exists a function
$h\colon V \to \RR$ (unique up to an additive constant) such that
\begin{equation}
  \Delta(u \to v) = h(v) - h(u) \quad\text{for all } (u \to v) \in \vec{E}.
\label{eq:potential}
\end{equation}
Moreover, if $\Delta(\vec{E})\subseteq\ZZ$ and one fixes $h(v_0)=0$
at a reference vertex, then the path-integral construction yields
$h\colon V\to\ZZ$.
\end{theorem}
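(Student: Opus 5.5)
The plan is to prove the two directions separately and then handle uniqueness and integrality.

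For the easy direction (potential $\Rightarrow$ cycle closure), suppose $\Delta(u\to v)=h(v)-h(u)$ on every directed edge. Then for any closed walk $v_0\to\cdots\to v_k=v_0$ the sum $\sum_i \Delta(v_i\to v_{i+1})=\sum_i\bigl(h(v_{i+1})-h(v_i)\bigr)$ telescopes to $h(v_k)-h(v_0)=0$. This is exactly the argument of Lemma~\ref{lem:face-sum-general}, applied to an arbitrary closed walk instead of a face boundary. Note that antisymmetry is automatic for such a $\Delta$.

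For the substantive direction (cycle closure $\Rightarrow$ potential), I use the path-integral construction.
\begin{enumerate}[label=(\arabic*),noitemsep]
\item Fix a reference vertex $v_0$. Since $G$ is connected, every $v$ is joined to $v_0$ by a finite walk $\gamma=(v_0=w_0\to w_1\to\cdots\to w_m=v)$. Define $h(v):=\sum_{j}\Delta(w_j\to w_{j+1})$.
\item \emph{Well-definedness.} Take two walks $\gamma,\gamma'$ from $v_0$ to $v$. Concatenate $\gamma$ with the reversal $\bar\gamma'$ to get a closed walk. By antisymmetry~\eqref{eq:antisymmetry}, the sum of $\Delta$ along $\bar\gamma'$ is the negative of the sum along $\gamma'$. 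Cycle closure~\eqref{eq:cycle-closure} then gives $\sum_\gamma\Delta-\sum_{\gamma'}\Delta=0$. Because Definition~\ref{def:cycle-closure} allows repeated vertices, no reduction to simple cycles is needed.
\item \emph{Gradient property.} For an edge $(u\to v)$, extend a walk from $v_0$ to $u$ by this edge. By well-definedness, $h(v)=h(u)+\Delta(u\to v)$, which is~\eqref{eq:potential}.
\end{enumerate}

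For uniqueness, suppose $h$ and $h'$ both satisfy~\eqref{eq:potential}. Then $g=h-h'$ satisfies $g(v)=g(u)$ across every edge. By connectedness, propagating along a walk shows $g$ is constant.

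For integrality, if $\Delta(\vec E)\subseteq\ZZ$, each $h(v)$ is a finite sum of integers, and $h(v_0)=0$ (the empty walk). So $h\colon V\to\ZZ$.

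The only point needing care is step (2): the closed walk formed from $\gamma$ and $\bar\gamma'$ may backtrack or revisit vertices. This is harmless under the stated walk-based definition of cycle closure, and I would say so explicitly. The graph may be infinite (as $G_{\mathcal T}$ is), but every walk is finite, so all sums are finite and no convergence issue arises.
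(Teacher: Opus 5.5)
Your proposal is correct and follows the same route as the paper: telescoping for the forward direction, and the path-integral construction for the converse, with well-definedness obtained by applying cycle closure to $\gamma$ followed by the reversal of $\gamma'$. Your version is slightly more complete than the paper's proof, which leaves three points implicit: the use of antisymmetry on the reversed walk, the one-edge extension giving $h(v)=h(u)+\Delta(u\to v)$, and the uniqueness-up-to-a-constant argument via connectedness.
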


\begin{proof}
The result is standard in combinatorial Hodge theory; we include the proof for completeness, since the explicit path-integral construction is used in the integer-valued setting of Section~\ref{sec:main}.  See Biggs~\cite{Biggs1974} for the graph-theoretic background and Thurston~\cite{Thurston1990} for the height-function analogue.

\emph{Forward direction (potential $\Rightarrow$ cycle closure).}
Let $v_0\to v_1\to\cdots\to v_k=v_0$ be any closed walk. Then
\[
  \sum_{i=0}^{k-1}\Delta(v_i\to v_{i+1})
  \;=\;
  \sum_{i=0}^{k-1}\bigl(h(v_{i+1})-h(v_i)\bigr)
  \;=\;
  h(v_k)-h(v_0)
  \;=\;0.
\]

\emph{Converse (cycle closure $\Rightarrow$ potential).}
Fix a reference vertex $v_0$.
For $v\in V$, choose any path $\gamma_{v_0\to v}$ from $v_0$ to $v$ and define
\[
  h(v)\;:=\;\sum_{e\in\gamma_{v_0\to v}}\Delta(e).
\]
If $\gamma$ and $\gamma'$ are two such paths, then $\gamma\cdot(\gamma')^{-1}$
is a closed walk, so its $\Delta$-sum is $0$ by cycle closure, hence the two
path sums agree. Therefore $h$ is well-defined and satisfies
$\Delta(u\to v)=h(v)-h(u)$ on every directed edge.

\emph{Integer-valued case.}  If $\Delta(\vec{E})\subseteq\ZZ$ and $h(v_0)=0$, then every path sum is a finite sum of integers, so $h(v)\in\ZZ$ for all $v\in V$.
\end{proof}

\begin{remark}[Cohomological interpretation]
\label{rem:cohomological-meaning}
Theorem~\ref{thm:poincare} is equivalent to the vanishing of the first cohomology group $H^1(G;\RR)$ for any connected graph $G$.
Concretely: the group of $1$-cochains $C^1(G;\RR)$ decomposes as $\ker\delta^1 = \mathrm{im}\,\delta^0$ (cocycles = coboundaries), where $\delta^0 h(u\to v) := h(v)-h(u)$ and $\delta^1\Delta(f) := \sum_{e\in\partial f}\Delta(e)$ (face boundary sum).
In the integer-valued setting $\Delta\colon\vec{E}\to\ZZ$, the relevant group is $H^1(G;\ZZ)$, which is trivial for trees and---by the universal coefficient theorem---equals $\mathrm{Hom}(H_1(G;\ZZ),\ZZ)$ for general graphs.
For the tiling adjacency graph $G_{\mathcal{T}}$, the vanishing of $H^1$ is equivalent to the matching rules (Section~\ref{sec:main}), and the integer class $[\Delta_{\mathbf{n}}]\in H^1(G_{\mathcal{T}};\ZZ)$ measuring the obstruction to global bar continuity is the subject of Section~\ref{sec:cohomology}.
\end{remark}

Section~\ref{sec:main} applies Theorem~\ref{thm:poincare} to the Penrose setting: the abstract $1$-cochain $\Delta$ is replaced by the signed bar-crossing count $\Delta_{\mathbf{n}}$, and the abstract scalar potential $h$ becomes the Ammann height function.
The equivalence ``cycle closure $\Leftrightarrow$ matching rules'' is the central result of Section~\ref{sec:main}.

\section{The Tiling--Potential Correspondence}
\label{sec:main}

This section is the technical core of the paper.
Using the Penrose background from Section~\ref{sec:background} and the abstract algebraic tools from Section~\ref{sec:discrete-potential}, we prove that the signed Ammann bar-crossing count is a $1$-cochain on the tiling adjacency graph $G_{\mathcal{T}}$, and that the discrete Poincar\'{e} lemma (Theorem~\ref{thm:poincare}) applies to yield the classical Ammann height function as the resulting scalar potential.
The central result is Theorem~\ref{thm:matching-conservation}: for any \emph{candidate} decorated Penrose tiling, the four conditions---matching rules, bar continuity, cycle closure, and height-function existence---are mutually equivalent.
This equivalence is the precise algebraic sense in which matching rules are conservation laws (cocycle conditions).

The roadmap for this section goes as follows: \S\ref{sec:bar-postings} defines the tile-side
bar-crossing data and the gluing condition that turns it into a global
$1$-cochain; \S\ref{sec:patch-example} gives an explicit patch-level example and a
concrete matching-rule violation; \S\ref{sec:height-potential} identifies the
resulting potential with the Ammann height; \S\ref{sec:conservation} proves the
four-way equivalence for candidate tilings; \S\ref{sec:five-family} assembles all
five families and proves pentagrid reconstruction; \S\ref{sec:cohomology} explains
the cohomological meaning; and \S\ref{sec:algorithm} extracts a linear-time
validation procedure.

We now return to the Penrose tiling graph $G_{\mathcal{T}}$ of
\S\ref{sec:recog-graph} and show that the Ammann bar data of
\S\ref{sec:penrose-background} provide a natural antisymmetric edge
function on $G_{\mathcal{T}}$ (Definition~\ref{def:antisymmetric}).  The general theory of
\S\ref{sec:discrete-potential} then applies: cycle closure (Definition~\ref{def:cycle-closure})
yields a potential (Theorem~\ref{thm:poincare}, equation~\eqref{eq:potential}),
and the potential turns out to be the classical Ammann height function.

\subsection{Bar-crossing functions as antisymmetric 1-cochains}
\label{sec:bar-postings}

The construction of the bar-crossing cochain proceeds in two stages:
a \emph{tile-side} (half-edge) function defined from each tile's own
decoration alone, and a \emph{global} cochain that exists precisely when
adjacent tiles agree on every shared edge---i.e., when the matching rules
hold.
This two-stage approach makes the conservation-law interpretation exact and
removes any ambiguity in the candidate-tiling setting.

\begin{definition}[Tile-side bar-crossing function]
\label{def:half-edge-crossing}
Fix one of the five Ammann bar families of $\mathcal{T}$, with unit normal
vector $\mathbf{n}$.
For each tile $t \in \mathcal{T}$ and each directed edge $(u \to v)$ on the
boundary $\partial t$, define
\begin{equation}
  \Delta_{\mathbf{n}}^{(t)}(u \to v) :=
  \begin{cases}
    +1 & \text{if tile $t$'s local bar segment crosses $(uv)$
         and $\mathbf{n}\cdot(v-u) > 0$,} \\
    -1 & \text{if tile $t$'s local bar segment crosses $(uv)$
         and $\mathbf{n}\cdot(v-u) < 0$,} \\
     0 & \text{if no local bar segment of $t$ crosses $(uv)$.}
  \end{cases}
\label{eq:half-edge-crossing}
\end{equation}
Here ``local bar segment of $t$'' refers to the finite bar segment inside
tile $t$ determined solely by $t$'s own Penrose decoration
(Definition~\ref{def:local-global-bars}), with no reference to adjacent
tiles.
This function is well-defined for every candidate tiling:
each tile edge crosses at most one local bar segment per
family~\cite{BaakeGrimm2013}, so $\Delta_{\mathbf{n}}^{(t)}(u \to v)
\in \{-1,0,+1\}$ (see Figure~\ref{fig:sign-convention} for
the sign convention); and reversing the edge reverses the sign, so the
function is antisymmetric on $\partial t$.
\end{definition}

A concrete computation of these tile-side values appears in Example~\ref{ex:dart-patch} in \S\ref{sec:patch-example}.

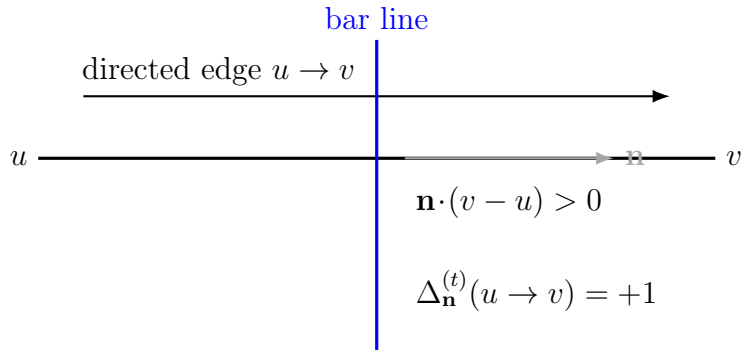
\begin{figure}[H]
\centering
\begin{tikzpicture}[scale=1.5,>=Latex]
  \coordinate (u) at (0,0);
  \coordinate (v) at (6,0);
  \draw[very thick] (u) -- (v);
  \node[left]  at (u) {$u$};
  \node[right] at (v) {$v$};
  \draw[->,thick] (0.4,0.55) -- (5.6,0.55);
  \node[above] at (1.6,0.55) {directed edge $u\to v$};
  \draw[very thick,blue] (3,-1.7) -- (3,1.05);
  \node[blue,anchor=south] at (3,1.05) {bar line};
  \draw[->,thick,gray!70] (3.25,0) -- (5.1,0)
        node[anchor=west] {$\mathbf{n}$};
  \node[anchor=north west] at (3.25,-0.15)
        {$\mathbf{n}\!\cdot\!(v-u)>0$};
  \node[anchor=north west] at (3.25,-0.90)
        {$\Delta_{\mathbf{n}}^{(t)}(u\to v)=+1$};
\end{tikzpicture}
\caption{Sign convention for Definition~\ref{def:half-edge-crossing}:
a crossing with $\mathbf{n}\!\cdot\!(v-u)>0$ contributes $+1$;
the reversed directed edge contributes $-1$.}
\label{fig:sign-convention}
\end{figure}

\begin{remark}[Why edge type alone does not suffice]
\label{rem:edge-type-insufficiency}
One might attempt to define $\Delta_{\mathbf{n}}^{(t)}$ purely from edge type
(``$+1$ for long edges, $0$ for short edges'').
This fails: such a rule assigns $+1$ regardless of traversal direction,
contradicting antisymmetry.
The bar-family normal $\mathbf{n}$ resolves this by providing an orientation
extrinsic to the edge, and different bar families yield different tile-side
functions $\Delta_{\mathbf{n}}^{(t)}$, each producing one of the five distinct Ammann height functions (whose independence is established by Theorem~\ref{thm:five-family}(ii) below).
\end{remark}

\begin{lemma}[Tile boundary sums vanish]
\label{lem:tile-sum-zero}
For every tile $t$ and every bar family~$\mathbf{n}$,
\begin{equation}
  \sum_{(u \to v) \in \partial t} \Delta_{\mathbf{n}}^{(t)}(u \to v) = 0,
\label{eq:tile-sum-zero}
\end{equation}
where $\partial t$ is traversed in the counterclockwise boundary orientation.
\end{lemma}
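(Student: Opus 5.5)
The plan is a transversality argument: every local bar segment that enters a tile must also leave it, and its two crossings contribute opposite signs to the counterclockwise boundary sum.

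Fix a tile $t$ and a bar family $\mathbf{n}$. By Lemma~\ref{lem:local-bars-determined}, $t$ carries a finite set of local bar segments of family $\mathbf{n}$, each a straight chord lying on a line $\ell=\{x:\mathbf{n}\cdot x=c\}$. Since $t$ is a polygon, each segment is the intersection $\ell\cap t$, or at least a chord joining two boundary points.

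By Remark~\ref{rem:transversality}(i), no bar passes through a vertex of $t$, and no bar is parallel to an edge of $t$. Hence each chord meets $\partial t$ in exactly two points. Each of these points lies in the relative interior of some edge, and each crossing is transversal. By Remark~\ref{rem:transversality}(ii), distinct chords meet distinct edges. The nonzero terms of \eqref{eq:tile-sum-zero} are therefore in bijection with the endpoints of the chords, so it suffices to show that the two endpoints of a single chord contribute $+1$ and $-1$.

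For the sign computation, parametrise $\partial t$ counterclockwise and consider the function $f(x)=\mathbf{n}\cdot x-c$ along it. The chord's endpoints $p,q$ split $\partial t$ into two arcs. One arc lies in $\{f>0\}$ and the other in $\{f<0\}$, at least locally near $p$ and $q$; for a convex tile such as the kite this holds globally. Traversing counterclockwise, $f$ changes sign from negative to positive at one endpoint and from positive to negative at the other. At the directed edge $(u\to v)$ containing a crossing point, the sign of $\mathbf{n}\cdot(v-u)$ is exactly the direction in which $f$ changes there. So by \eqref{eq:half-edge-crossing} the two endpoints contribute $+1$ and $-1$, and summing over chords gives zero.

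The main obstacle is the nonconvex dart. There a line $\ell$ can meet $\partial t$ in four points, and $\ell\cap t$ can then consist of two segments. The local decoration chord need not be all of $\ell\cap t$, so the sign-change argument must be applied to the chord's own two endpoints rather than to $f$ globally. The fix is to note that $f$ changes sign transversally at every intersection of $\ell$ with $\partial t$, and that consecutive intersections along a closed curve alternate in sign. Since the segment is a chord inside $t$, its endpoints bound a component of $t\cap\ell$. Taking the arc of $\partial t$ between the endpoints that contains no other point of $\ell$, the two crossings have opposite directions.

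If this parity argument becomes delicate, the fallback is to verify the two prototiles directly, using the explicit bar positions in Appendix~\ref{app:tile-calcs}. This is finitely many cases: two tiles, five families, and orientations related by rotation.
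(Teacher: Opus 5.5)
Your proposal is correct and is essentially the paper's argument: transversality makes each local segment a chord with endpoints on two distinct edges, and the sign change of $\mathbf{n}\cdot x-c$ along the counterclockwise boundary makes the two endpoint contributions cancel. Your extra treatment of the nonconvex dart tightens a point the paper glosses over, since the paper reasons as if the line met $\partial t$ only at the segment's endpoints. One caveat: the free arc between the chord's endpoints exists because a quadrilateral meets a line in at most four points, so any second chord has both endpoints on one arc. It does not follow merely from the chord bounding a component of $t\cap\ell$. For polygons with more edges (e.g.\ an S-shape), both arcs can contain crossings, and you would need either the even-count parity or the local observation that the interior of $t$ lies to the left of $\partial t$ at both endpoints.
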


\begin{proof}
Each local bar segment inside $t$ is a straight segment parallel to the bar
direction~\cite{GrunbaumShephard1987,BaakeGrimm2013}.
For Penrose P2 tiles (darts and kites), the five Ammann bar families are
oriented so that their bars are never parallel to any tile edge and never
pass through any tile vertex~\cite{BaakeGrimm2013}; consequently, each local
bar segment crosses $\partial t$ transversally in the relative interior of
exactly two boundary edges---entering through one and exiting through another.

To see that the two crossings contribute opposite signs, we use the following
topological observation: the boundary $\partial t$ is a simple closed curve in
$\RR^2$, and the \emph{algebraic intersection number} of any closed curve with
any oriented line is zero.
Concretely, the bar of family~$\mathbf{n}$ is an oriented line with normal
$\mathbf{n}$.  Define $H^+ := \{\mathbf{n} \cdot x > c\}$ and
$H^- := \{\mathbf{n} \cdot x < c\}$ for the appropriate constant $c$.
As $\partial t$ is traversed counterclockwise, it starts in one open
half-plane, say $H^-$, crosses into $H^+$ at the entering edge
(contributing $\Delta^{(t)}_{\mathbf{n}} = +1$ since $\mathbf{n}\cdot(v-u)>0$
at that crossing), and must eventually cross back into $H^-$ at the exiting
edge (contributing $\Delta^{(t)}_{\mathbf{n}} = -1$ since
$\mathbf{n}\cdot(v'-u')<0$ at that crossing) in order to return to the
starting vertex.
Since $\partial t$ is a closed curve, the signed count of crossings with
the bar line equals zero.
Any bar segment not crossing $\partial t$ contributes only zeros.
Summing over all bar segments inside~$t$ gives zero.
\end{proof}

Before stating the gluing condition, we record the precise link between
edge markings and crossing values that makes the equivalence
(gluing $\Leftrightarrow$ matching) available.

\begin{remark}[From edge decorations to signed crossings]
\label{rem:decoration-to-crossing}
The $\{-1,0,+1\}$ crossing values are decoration-determined
(Lemma~\ref{lem:local-bars-determined}): the tile-side value
$\Delta^{(t)}_{\mathbf{n}}(u\to v)$ reads directly from the marking
that tile~$t$ places on $\{u,v\}$.
For the unique relevant family $\mathbf{n}^*$ associated with the
edge-direction class of $\{u,v\}$ (Remark~\ref{rem:transversality}(iii)),
equality of tile-side values is therefore equivalent to agreement of
decorations; see~\cite[Ch.~6]{BaakeGrimm2013} for the explicit
marking-to-crossing lookup.
\end{remark}

\begin{lemma}[Gluing condition and global cochain]
\label{lem:gluing}
Let $\mathcal{T}$ be a candidate decorated Penrose tiling (see Figure~\ref{fig:gluing-schematic} for a schematic).
In the Penrose P2 tiling, the five bar families correspond to five distinct edge-direction classes (Remark~\ref{rem:transversality}(iii)): each interior edge $\{u,v\}$ is crossed by bars of \emph{exactly one} bar family (the family whose normal $\mathbf{n}^*$ is parallel to the direction of $\{u,v\}$); for the other four families, both tile-side values are~$0$ trivially.
Fix a bar family with normal $\mathbf{n}$.
\begin{enumerate}[label=(\roman*),noitemsep]
\item \textbf{(Gluing equals matching.)}
For each interior edge $\{u,v\}$ shared by tiles $A$ and $B$:
\begin{equation}
  \Delta_{\mathbf{n}}^{(A)}(u \to v) = \Delta_{\mathbf{n}}^{(B)}(u \to v).
\label{eq:gluing-condition}
\end{equation}
Moreover, \eqref{eq:gluing-condition} holds if and only if the Penrose matching
rule holds on $\{u,v\}$ with respect to family~$\mathbf{n}$.
Here ``matching rule with respect to family $\mathbf{n}$'' means: the
markings on $\{u,v\}$ from $A$ and $B$ agree \emph{as seen by family
$\mathbf{n}$}.
If $\mathbf{n}$ is \emph{not} the family whose bars cross edges of type
$\{u,v\}$, both sides equal~$0$ trivially and the condition carries no
information.
The \emph{full} Penrose matching rule on $\{u,v\}$ holds if and only if
the gluing condition~\eqref{eq:gluing-condition} holds for the unique
family $\mathbf{n}^*$ that crosses $\{u,v\}$.
\item \textbf{(Global cochain.)}
$\Delta_{\mathbf{n}}$ on $G_{\mathcal{T}}$ exists if and only if the gluing
condition~\eqref{eq:gluing-condition} holds on every interior edge.
It is an antisymmetric edge function
\[
  \Delta_{\mathbf{n}}\colon \vec{E} \to \{-1,0,+1\}.
\]
When it exists, $\Delta_{\mathbf{n}}(u \to v)$ is the common value of
$\Delta_{\mathbf{n}}^{(A)}$ and $\Delta_{\mathbf{n}}^{(B)}$ on the shared edge.
\end{enumerate}
\end{lemma}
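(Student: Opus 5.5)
The lemma has two parts. Part (i) is a local statement about one shared edge. Part (ii) is a formal consequence of (i) together with the antisymmetry of each tile-side function. I would prove (i) first, splitting on whether the fixed family $\mathbf{n}$ is the relevant family for the edge.

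\emph{Case 1: $\mathbf{n}\neq\mathbf{n}^*$.} Here $\mathbf{n}^*$ is the unique family attached to the edge-direction class of $\{u,v\}$ by Remark~\ref{rem:transversality}(iii). By Remark~\ref{rem:transversality}(iii), no local bar segment of family $\mathbf{n}$ in $A$ or in $B$ crosses $\{u,v\}$. Definition~\ref{def:half-edge-crossing} then gives $\Delta^{(A)}_{\mathbf{n}}(u\to v)=\Delta^{(B)}_{\mathbf{n}}(u\to v)=0$. So \eqref{eq:gluing-condition} holds, and the matching condition ``as seen by family $\mathbf{n}$'' is vacuous.

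\emph{Case 2: $\mathbf{n}=\mathbf{n}^*$.} By Remark~\ref{rem:transversality}(iv), both tile-side values lie in $\{\pm1\}$. The sign factor $\operatorname{sgn}\,\mathbf{n}\cdot(v-u)$ is the same for $A$ and $B$, since it depends only on the geometric edge. So the two values can differ only through which side of the crossing each tile's marking encodes, i.e.\ through the decoration-to-crossing lookup. By Remark~\ref{rem:decoration-to-crossing} and Lemma~\ref{lem:local-bars-determined}, the value each tile places on $\{u,v\}$ is a function of the marking that tile carries on that edge, and this function is injective on the two admissible markings of that edge class. Hence the values are equal if and only if the markings agree, which is the matching rule on $\{u,v\}$. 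The statement about the \emph{full} matching rule follows by combining the two cases: only $\mathbf{n}^*$ carries information about $\{u,v\}$.

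I expect this injectivity step to be the main obstacle. The paper offloads it to the marking-to-crossing lookup of \cite[Ch.~6]{BaakeGrimm2013}. To make it self-contained I would check, for each of the two P2 edge lengths, that the two possible arrow/arc orientations place the Ammann crossing point at distinct positions along the edge. These are the two asymmetric positions, mirror images under the edge's midpoint reflection. I would then argue that the tile-side sign records which position occurs relative to the edge orientation. If the signs turned out to coincide for both markings, the honest fix would be to refine $\Delta^{(t)}$ to record the crossing position rather than only its sign. I would flag this as a possible gap.

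For (ii), define $\Delta_{\mathbf{n}}(u\to v)$ as the common tile-side value on an interior edge. This is well defined exactly when \eqref{eq:gluing-condition} holds. If it fails on some edge, no single value is compatible with both tiles, so no global cochain extending the tile-side data exists. This gives the ``if and only if''. Antisymmetry is inherited edge by edge from the antisymmetry of each $\Delta^{(t)}_{\mathbf{n}}$ noted in Definition~\ref{def:half-edge-crossing}. The values lie in $\{-1,0,+1\}$ by that same definition. Boundary edges do not arise, since a tiling of $\RR^2$ has every edge shared by two tiles.
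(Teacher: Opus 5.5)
Your outline follows the paper's proof essentially step for step: you split on whether $\mathbf{n}=\mathbf{n}^*$, take the value as determined by the tile's own marking, claim injectivity on the two markings of the edge class, and give a formal gluing argument for (ii). Case~1 and part~(ii) are fine.

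The gap is the injectivity claim in Case~2, and you were right to be suspicious. With the definitions as given, it is not merely unproved but false. In Definition~\ref{def:half-edge-crossing} the only tile-dependent input is \emph{whether} $t$'s local segment crosses $\{u,v\}$. The sign is $\operatorname{sgn}\,\mathbf{n}\cdot(v-u)$, which, as you yourself note, depends only on the geometric directed edge. For $\mathbf{n}=\mathbf{n}^*$, Remark~\ref{rem:transversality}(iv), which you invoke, says the segment crosses for both markings. Hence $\Delta^{(A)}_{\mathbf{n}^*}(u\to v)=\Delta^{(B)}_{\mathbf{n}^*}(u\to v)=\operatorname{sgn}\,\mathbf{n}^*\cdot(v-u)$ whatever the markings are. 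The marking-to-value map is therefore constant on the two markings, not injective. The definition stores no side or position information, so your sentence that the values ``can differ only through which side of the crossing each tile's marking encodes'' has nothing to refer to. Lemma~\ref{lem:local-bars-determined} and Remark~\ref{rem:decoration-to-crossing} only say the value is \emph{a function of} the marking; the equivalence asserted in the remark does not follow. Consequently \eqref{eq:gluing-condition} holds on every edge of every candidate tiling, and the direction ``gluing $\Rightarrow$ matching'' in (i) fails at any mismatched edge.

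The paper's proof has the identical hole. It asserts that swapping the arrow direction negates the crossing value, but in Definition~\ref{def:half-edge-crossing} the arrow plays no role in the sign. Your proposed repair is the right idea: let the tile-side datum record the crossing position along the edge (equivalently, the orientation of the tile's arrow relative to $u\to v$), which genuinely distinguishes the two markings. But it is a change of definition, not a proof of the lemma as stated. Note also that the $\{-1,0,+1\}$-valued signed-count cochain exists unconditionally, already under the current definition. So once gluing is redefined as agreement of crossing positions, part~(ii) and Example~\ref{ex:violation} must also be restated: a mismatch destroys agreement of crossing positions (bar continuity), not the existence of $\Delta_{\mathbf{n}}$.
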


\begin{proof}
\emph{Part~(i).}
In the Penrose P2 decoration, each edge of each prototile carries a specific
arrow/arc marking, and whether a local bar segment of tile $t$ crosses a
given boundary edge $\{u,v\}$ of $t$---and in which direction relative to
$\mathbf{n}$---is determined entirely by the edge marking that $t$ places on
$\{u,v\}$ (Lemma~\ref{lem:local-bars-determined}); see also
Baake--Grimm~\cite[Ch.~6]{BaakeGrimm2013}.
Consequently, $\Delta_{\mathbf{n}}^{(t)}(u \to v)$ depends only on $t$'s
own marking of $\{u,v\}$.

\emph{($\Leftarrow$, matching implies gluing.)}
The Penrose matching rule on $\{u,v\}$ asserts that $A$ and $B$ place
identical markings on $\{u,v\}$.  Since $\Delta^{(t)}_{\mathbf{n}^*}(u\to v)$
depends only on $t$'s own marking, identical markings give identical
tile-side values.

\emph{($\Rightarrow$, gluing implies matching.)}
Each edge of direction class $\alpha$ carries an arrow decoration pointing in
one of exactly two directions, so $|M_\alpha|=2$.
By Remark~\ref{rem:transversality}(iv), the relevant bar family
$\mathbf{n}^*_\alpha$ crosses every edge of class $\alpha$, so the
tile-side crossing value is always $\pm 1$ (never $0$) for both markings.
The two markings correspond to opposite arrow orientations; swapping the
arrow direction negates the crossing value, yielding $+1$ for one marking
and $-1$ for the other.
Since $+1\neq-1$, the map $m\mapsto\Delta^{(t)}_{\mathbf{n}^*_\alpha}(u\to v)$
is injective on $M_\alpha$.
Hence equal tile-side values
$\Delta^{(A)}_{\mathbf{n}^*}(u\to v) = \Delta^{(B)}_{\mathbf{n}^*}(u\to v)$
imply equal markings, which is precisely the matching rule on $\{u,v\}$.

For any other family $\mathbf{n} \neq \mathbf{n}^*$, both sides equal~$0$
regardless of the matching rule, so the gluing condition holds trivially.

\emph{Part~(ii).}
If the gluing condition holds on every interior edge, define
$\Delta_{\mathbf{n}}(u \to v)$ as the common value for shared edges, and
$\Delta_{\mathbf{n}}^{(t)}(u \to v)$ for boundary edges (belonging to one
tile only).
Antisymmetry of the global function follows from antisymmetry within each
tile.
Conversely, if $\Delta_{\mathbf{n}}$ is a well-defined global antisymmetric
function, its value on each shared edge must equal both
$\Delta_{\mathbf{n}}^{(A)}$ and $\Delta_{\mathbf{n}}^{(B)}$, so the gluing
condition holds.
\end{proof}

\begin{figure}[H]
\centering
\begin{tikzpicture}[scale=1.5,>=Latex]
  \coordinate (u) at (0,0);
  \coordinate (v) at (4,0);
  \coordinate (a1) at (-0.6,1.3);
  \coordinate (a2) at (2.0,1.8);
  \coordinate (a3) at (4.6,1.3);
  \coordinate (b1) at (-0.6,-1.3);
  \coordinate (b2) at (2.0,-1.8);
  \coordinate (b3) at (4.6,-1.3);

  \draw[thick] (a1)--(u)--(v)--(a3)--(a2)--cycle;
  \draw[thick] (b1)--(u)--(v)--(b3)--(b2)--cycle;
  \draw[very thick] (u)--(v);

  \node[above] at (0.9, 0.7)  {$A$};
  \node[below] at (0.9,-0.7) {$B$};

  \draw[->,thick] (0.3,0.28)--(3.7,0.28);
  \node[above] at (0.85,0.28) {$u\to v$};
  \draw[->,thick] (3.7,-0.28)--(0.3,-0.28);
  \node[below] at (0.85,-0.28) {$v\to u$};

  \draw[->,thick,gray!70] (2.2,0) -- (3.5,0) node[anchor=north west] {$\mathbf{n}$};

  \draw[dashed,gray!70] (2,-1.6) -- (2,1.6);

  \node[left]  at (u) {$u$};
  \node[right] at (v) {$v$};
\end{tikzpicture}
\caption{Schematic of the gluing condition on a shared edge.  Tiles $A$ and $B$ induce tile-side values $\Delta^{(A)}_{\mathbf{n}}$ and $\Delta^{(B)}_{\mathbf{n}}$ on the same directed edge $u\to v$. The Penrose matching rule on $\{u,v\}$ (for the unique relevant family) is equivalent to equality of these induced values (Lemma~\ref{lem:gluing}).}
\label{fig:gluing-schematic}
\end{figure}
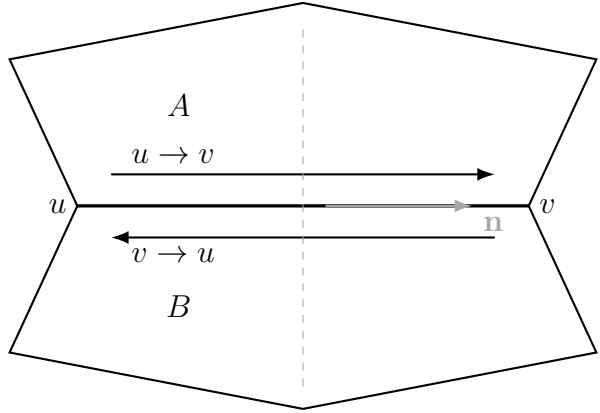

\begin{definition}[Bar-crossing function (global)]
\label{def:bar-crossing}
Let $\mathcal{T}$ satisfy the matching rules, so the gluing condition of
Lemma~\ref{lem:gluing} holds for every bar family.
The \emph{bar-crossing function} for family $\mathbf{n}$ is the globally
consistent antisymmetric edge function
\[
  \Delta_{\mathbf{n}}\colon \vec{E} \longrightarrow \{-1,0,+1\}
\]
given by $\Delta_{\mathbf{n}}(u \to v) = \Delta_{\mathbf{n}}^{(t)}(u \to v)$
for any tile $t$ incident to edge $\{u,v\}$.
By Lemma~\ref{lem:gluing}(ii), this value is independent of the choice of $t$.
\end{definition}

\begin{remark}[Conservation-law reading of the gluing condition]
\label{rem:gluing-conservation}
Each tile contributes a tile-side function $\Delta_{\mathbf{n}}^{(t)}$
balanced around its own boundary (Lemma~\ref{lem:tile-sum-zero}).
The matching rule is exactly the \emph{gluing condition}: adjacent tiles
must agree on the value they assign to every shared directed edge.
A mismatch---one tile assigns $+1$ and the other $-1$ or $0$ to the same
directed edge---is a local violation that prevents a globally consistent
cochain from existing (see Remark~\ref{rem:conservation} for the
detailed conservation-law interpretation).
\end{remark}

\subsection{A concrete patch example}
\label{sec:patch-example}

We ground the abstract machinery in explicit tile-face computations,
then show how a matching-rule violation destroys cycle closure.

\begin{figure}[H]
\centering
\begin{subfigure}[t]{0.48\textwidth}
\centering
\begin{tikzpicture}[scale=1.3,>=Latex]
  \coordinate (u) at (0,0);
  \coordinate (v) at (3,0);
  \coordinate (d1) at (-0.8,1.2);
  \coordinate (d2) at (1.5,1.6);
  \coordinate (d3) at (3.8,1.2);
  \coordinate (k1) at (-0.8,-1.2);
  \coordinate (k2) at (1.5,-1.6);
  \coordinate (k3) at (3.8,-1.2);

  \draw[thick] (d1)--(u)--(v)--(d3)--(d2)--cycle;
  \draw[thick] (k1)--(u)--(v)--(k3)--(k2)--cycle;
  \draw[very thick] (u)--(v);

  \draw[->,thick] (0.3,0.35)--(2.7,0.35);
  \node[above] at (0.7,0.35) {$u\to v$};

  \draw[->,thick,gray!70] (1.7,0.0) -- (2.7,0.0) node[anchor=north west] {$\mathbf{n}$};

  \draw[very thick,blue] (1.5,1.25)--(1.5,-1.25);
  \node[blue,anchor=north west] at (1.65,1.15) {bar};

  \node[above] at (0.65,0.65) {$D$};
  \node[below] at (0.65,-0.65) {$K$};
  \node[left]  at (u) {$u$};
  \node[right] at (v) {$v$};
\end{tikzpicture}
\caption{Matching: the bar continues across $\{u,v\}$.}
\end{subfigure}\hfill
\begin{subfigure}[t]{0.48\textwidth}
\centering
\begin{tikzpicture}[scale=1.3,>=Latex]
  \coordinate (u) at (0,0);
  \coordinate (v) at (3,0);
  \coordinate (d1) at (-0.8,1.2);
  \coordinate (d2) at (1.5,1.6);
  \coordinate (d3) at (3.8,1.2);
  \coordinate (k1) at (-0.8,-1.2);
  \coordinate (k2) at (1.5,-1.6);
  \coordinate (k3) at (3.8,-1.2);

  \draw[thick] (d1)--(u)--(v)--(d3)--(d2)--cycle;
  \draw[thick] (k1)--(u)--(v)--(k3)--(k2)--cycle;
  \draw[very thick] (u)--(v);

  \draw[->,thick] (0.3,0.35)--(2.7,0.35);
  \node[above] at (0.7,0.35) {$u\to v$};

  \draw[->,thick,gray!70] (1.7,0.0) -- (2.7,0.0) node[anchor=north west] {$\mathbf{n}$};

  \draw[very thick,blue] (1.5, 1.25)--(1.5, 0.07);
  \draw[very thick,blue] (1.65,-0.07)--(1.65,-1.25);
  \node[blue,anchor=north west] at (1.7,1.15) {broken bar};

  \node[above] at (0.65,0.65) {$D$};
  \node[below] at (0.65,-0.65) {$K'$};
  \node[left]  at (u) {$u$};
  \node[right] at (v) {$v$};
\end{tikzpicture}
\caption{Mismatch: the bar breaks at $\{u,v\}$.}
\end{subfigure}
\caption{Schematic version of Examples~\ref{ex:dart-patch}--\ref{ex:violation}.
The blue segment represents one local Ammann bar segment from the chosen family.
In the matching case the local segments glue to a single straight bar; in the
mismatched case they fail to glue, so no global cochain (and hence no global
potential) exists for that family.}
\label{fig:dart-kite-schematic}
\end{figure}
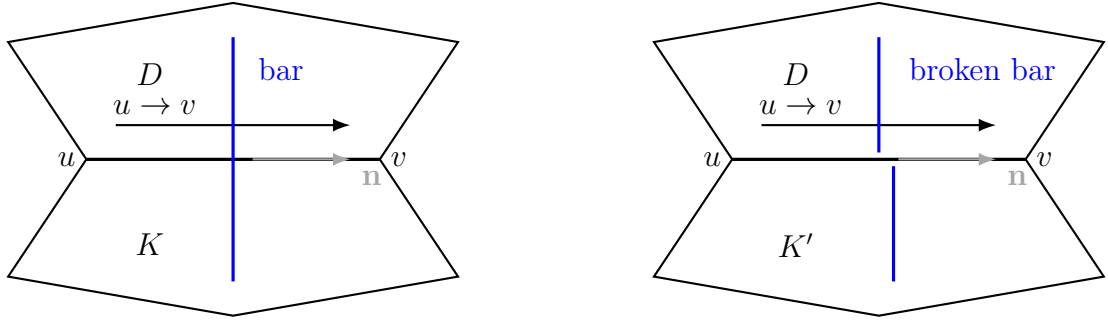

\begin{example}[Bar-crossing function on a dart--kite pair]
\label{ex:dart-patch}
(See Figure~\ref{fig:dart-kite-schematic}, left panel.)
Fix the bar family with unit normal $\mathbf{n}$.
Consider a dart tile $D$ with vertices $v_0, v_1, v_2, v_3$
(counterclockwise), where $\overline{v_0 v_1}$ and $\overline{v_3 v_2}$
are the two long edges and $\overline{v_1 v_2}$, $\overline{v_0 v_3}$
are the two short edges.
Suppose the bar family is oriented so that bars cross the two long edges.
The tile-side bar-crossing values $\Delta_{\mathbf{n}}^{(D)}$ on the dart
boundary are:
\begin{align*}
  \Delta_{\mathbf{n}}^{(D)}(v_0 \to v_1) &= +1
    &\text{(long edge $\overline{v_0v_1}$, crossed in $+\mathbf{n}$ direction)},\\
  \Delta_{\mathbf{n}}^{(D)}(v_1 \to v_2) &= 0
    &\text{(short edge $\overline{v_1v_2}$, no crossing)},\\
  \Delta_{\mathbf{n}}^{(D)}(v_2 \to v_3) &= -1
    &\text{(long edge $\overline{v_3v_2}$, crossed in $-\mathbf{n}$ direction)},\\
  \Delta_{\mathbf{n}}^{(D)}(v_3 \to v_0) &= 0
    &\text{(short edge $\overline{v_0v_3}$, no crossing)}.
\end{align*}
Tile boundary sum on $D$:\enskip $(+1) + 0 + (-1) + 0 = 0$.\enskip$\checkmark$
(Lemma~\ref{lem:tile-sum-zero}.)

Normalising $h_{\mathbf{n}}(v_0)=0$, the height values are
$h_{\mathbf{n}}(v_1)=1$, $h_{\mathbf{n}}(v_2)=1$, $h_{\mathbf{n}}(v_3)=0$.

Now adjoin a kite $K$ sharing the long edge $\overline{v_0 v_1}$ with $D$.
The Penrose matching rule forces the kite's decoration to agree with the
dart's on this shared edge; by Lemma~\ref{lem:gluing}(i), this is
equivalent to the gluing condition
$\Delta_{\mathbf{n}}^{(K)}(v_0 \to v_1) = \Delta_{\mathbf{n}}^{(D)}(v_0 \to v_1) = +1$.
The global bar-crossing function $\Delta_{\mathbf{n}}$
(Definition~\ref{def:bar-crossing}) is therefore well-defined on
$\overline{v_0v_1}$, and the Ammann bar continues straight from $D$ into $K$.
The kite boundary sum is also $0$ (Lemma~\ref{lem:tile-sum-zero}), and the
height values $h_{\mathbf{n}}(v_0)=0$, $h_{\mathbf{n}}(v_1)=1$ extend
consistently to the kite's remaining vertices.
(See Appendix~\ref{app:tile-calcs} for a complete enumeration of tile
orientations.)
\end{example}

\begin{example}[A matching-rule violation breaks cycle closure]
\label{ex:violation}
(See Figure~\ref{fig:dart-kite-schematic}, right panel.)
Keep the dart $D$ from Example~\ref{ex:dart-patch} but adjoin a
\emph{mismatched} kite $K'$ that violates the matching rule on
$\overline{v_0 v_1}$: the decoration on $K'$'s side of this edge disagrees
with the dart's.
By Lemma~\ref{lem:gluing}(i), this decoration mismatch is exactly the
failure of the gluing condition:
\[
  \Delta_{\mathbf{n}}^{(D)}(v_0 \to v_1) = +1
  \;\neq\;
  \Delta_{\mathbf{n}}^{(K')}(v_0 \to v_1).
\]
If $K'$'s mismatched bar segment crosses $\overline{v_0 v_1}$ in the
opposite direction then $\Delta_{\mathbf{n}}^{(K')}(v_0 \to v_1) = -1$;
if $K'$ places no bar on this edge then
$\Delta_{\mathbf{n}}^{(K')}(v_0 \to v_1) = 0$.
In either case, by Lemma~\ref{lem:gluing}(ii), no globally consistent
$\Delta_{\mathbf{n}}$ exists on $G_{\mathcal{T}}$.
Because the decorations disagree, the local bar segment inside $D$ and the
segment inside $K'$ do not join at $\overline{v_0 v_1}$; the bar
\emph{terminates} at the shared edge (Remark~\ref{rem:conservation}).
The gluing-condition check (Remark~\ref{rem:algorithm}) detects this
directly as a gluing violation on edge $\{v_0,v_1\}$ for bar family~$\mathbf{n}$.
\end{example}

\subsection{Height function as discrete potential}
\label{sec:height-potential}

The following proposition assembles bar continuity, the discrete
Poincar\'{e} lemma, and the classical Ammann height function into a
single statement: the potential produced by the lemma and the height
function are the same object, constructed by the same path integral.

\begin{proposition}[Height function as discrete potential]
\label{prop:height-potential}
Let $\mathcal{T}$ be a Penrose tiling satisfying the matching rules, and let
$\Delta_{\mathbf{n}}$ be the bar-crossing function of
Definition~\ref{def:bar-crossing}.
Then:
\begin{enumerate}[label=(\roman*)]
\item \textbf{(Cycle closure.)}
For every closed walk $\gamma = (v_0 \to v_1 \to \cdots \to v_k = v_0)$
in $G_{\mathcal{T}}$,
\begin{equation}
  \sum_{i=0}^{k-1} \Delta_{\mathbf{n}}(v_i \to v_{i+1}) = 0.
\label{eq:main-cycle}
\end{equation}

\item \textbf{(Potential existence.)}
By the discrete Poincar\'{e} lemma (Theorem~\ref{thm:poincare}),
there exists a potential
\[
  h_{\mathbf{n}}\colon V \longrightarrow \ZZ
\]
such that
$\Delta_{\mathbf{n}}(u \to v) = h_{\mathbf{n}}(v) - h_{\mathbf{n}}(u)$
for all $(u \to v) \in \vec{E}$, unique up to a global additive constant.

\item \textbf{(Identification.)}
The potential $h_{\mathbf{n}}$ coincides with the classical Ammann height
function of $\mathcal{T}$ for the bar family with normal~$\mathbf{n}$.
This is a definitional identification: both are constructed as path
integrals of the signed bar-crossing count from a common reference vertex.
\end{enumerate}
\end{proposition}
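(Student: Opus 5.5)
The plan is to prove (i) by reducing cycle closure on arbitrary closed walks to the vanishing of face sums, and then to read off (ii) and (iii) from Theorem~\ref{thm:poincare}.

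\textbf{Step 1: set up the cellular structure.} By Remark~\ref{rem:graph-properties}, $G_{\mathcal{T}}$ is the connected planar $1$-skeleton of the tiling CW complex $X_{\mathcal{T}}$, whose $2$-cells are the tiles. Since $X_{\mathcal{T}}$ is homeomorphic to $\RR^2$, it is simply connected, so $H_1(X_{\mathcal{T}};\ZZ)=0$. Consequently every closed walk $\gamma$ in $G_{\mathcal{T}}$, viewed as a cellular $1$-cycle, is a finite integer combination $\sum_t c_t\,\partial t$ of counterclockwise tile boundaries.

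This reduction deserves care. Because $\gamma$ is finite, it lies in a finite simply connected patch: take the union of the tiles meeting the bounded region enclosed by $\gamma$. One can then take $c_t$ to be the winding number of $\gamma$ around an interior point of $t$. The identity $\gamma=\sum_t c_t\,\partial t$ is then checked edge by edge. Across any edge, the winding number jumps by exactly the signed number of times $\gamma$ traverses that edge.

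\textbf{Step 2: obtain cycle closure (i).} Evaluate the global cochain on both sides of this identity. Since $\Delta_{\mathbf{n}}$ is antisymmetric and linear on $1$-chains,
\[
\sum_i \Delta_{\mathbf{n}}(v_i\to v_{i+1})=\sum_t c_t\sum_{(u\to v)\in\partial t}\Delta_{\mathbf{n}}(u\to v).
\]
On each tile boundary, Definition~\ref{def:bar-crossing} together with Lemma~\ref{lem:gluing}(ii) gives $\Delta_{\mathbf{n}}=\Delta^{(t)}_{\mathbf{n}}$. This is exactly where the matching-rule hypothesis enters: it makes the global value agree with every incident tile's value, so the global face sum is the tile-side face sum. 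By Lemma~\ref{lem:tile-sum-zero}, each tile-side face sum vanishes, and hence the whole sum is zero.

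\textbf{Step 3: potential existence (ii).} Apply Theorem~\ref{thm:poincare} to the connected graph $G_{\mathcal{T}}$ with the cochain $\Delta_{\mathbf{n}}$, which is cycle closed by Step~2. This yields $h_{\mathbf{n}}$, unique up to an additive constant. Since $\Delta_{\mathbf{n}}$ takes values in $\{-1,0,+1\}\subset\ZZ$, fixing $h_{\mathbf{n}}(v_0)=0$ makes $h_{\mathbf{n}}$ integer-valued.

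\textbf{Step 4: identification (iii).} The classical Ammann height (\S\ref{sec:penrose-background}) is the signed count of $\mathbf{n}$-bars crossed between $v_0$ and $v$. Along any edge path, this count is the sum of the per-edge signed crossings, because bars avoid vertices (Remark~\ref{rem:transversality}(i)). When matching rules hold, bars are continuous lines, so each such per-edge crossing is exactly $\Delta_{\mathbf{n}}(u\to v)$. The classical height is therefore the same path integral as the potential built in the proof of Theorem~\ref{thm:poincare}. Both vanish at $v_0$, so they coincide.

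\textbf{Main obstacle.} I expect the main obstacle to be Step~1, the passage from simple connectivity of the plane to an explicit decomposition of an arbitrary closed walk into tile boundaries. If the winding-number bookkeeping becomes messy, I would fall back on induction on the number of tiles enclosed by $\gamma$. Peeling off a boundary tile of the enclosed region changes the walk by $\pm\partial t$, which does not change the $\Delta_{\mathbf{n}}$-sum by Lemma~\ref{lem:tile-sum-zero}, and this reduces the walk to a degenerate backtracking walk whose sum vanishes by antisymmetry.
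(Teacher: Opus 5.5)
Your argument is correct. Parts (ii) and (iii) follow the paper essentially verbatim; the genuine difference is in part (i).

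The paper proves cycle closure directly from bar continuity. Since the matching rules force every $\mathbf{n}$-bar to be an entire straight line (the classical result it cites), each bar splits the plane into $H^+$ and $H^-$. A closed walk must cross each bar equally often in each direction, and only finitely many bars meet the walk, so the sum of $\Delta_{\mathbf{n}}$ splits bar-by-bar into zeros.

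You instead write the walk as $\sum_t c_t\,\partial t$ with winding-number coefficients, using $H_1(X_{\mathcal{T}};\ZZ)=0$. You then combine the gluing identity $\Delta_{\mathbf{n}}|_{\partial t}=\Delta^{(t)}_{\mathbf{n}}$ with the single-tile vanishing of Lemma~\ref{lem:tile-sum-zero}. This is the argument the paper gives only afterwards, in \S\ref{sec:cohomology} around~\eqref{eq:cycle-decomp}, not in the proof of this proposition.

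\emph{What your route buys.} It is purely local: the matching rules enter only edge-by-edge through the gluing condition. The global theorem that bars are infinite straight lines is never used for (i). So it proves ``gluing on every edge $\Rightarrow$ cycle closure'' directly, which is the cochain-first implication the paper advertises. It also transfers unchanged to any cochain with vanishing face sums on a contractible complex, such as the lattice cochains of Lemma~\ref{lem:face-sum-general}. The price is the topological bookkeeping of your Step~1, which you handle correctly: finitely many nonzero $c_t$, and the winding number jumping across an edge by the signed traversal count.

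\emph{What the paper's route buys.} It is shorter and more geometric, and it makes (iii) nearly tautological, since the walk sum is literally a signed count of crossings of global bar lines. However, it rests on the cited continuity theorem and on each edge meeting at most one bar of the family.
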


\begin{proof}
\emph{Part~(i): Cycle closure from bar continuity.}

Since $\mathcal{T}$ satisfies the matching rules, each Ammann bar forms a
continuous straight line extending across the entire
tiling~\cite{BaakeGrimm2013}.
We prove that $\Delta_{\mathbf{n}}$ has zero sum around any closed walk.

Let $\gamma$ be a closed walk and let $\ell$ be any single Ammann bar from the
chosen family.  The bar~$\ell$ is a straight line dividing the plane into two
open half-planes, $H^+$ (the $\mathbf{n}$-side) and $H^-$ (the
$-\mathbf{n}$-side).
Since $\gamma$ is a closed walk, it starts and ends at the same vertex.
Each time $\gamma$ crosses $\ell$ from $H^-$ to $H^+$, it must eventually
cross back from $H^+$ to $H^-$ (and vice versa) in order to return to its
starting point.
Therefore the number of positive crossings of $\ell$ equals the number of
negative crossings.
The contribution of bar $\ell$ to
$\sum_i \Delta_{\mathbf{n}}(v_i \to v_{i+1})$ is zero.

Since $\gamma$ has finitely many edges and each edge crosses at most one bar,
only finitely many bars contribute.  The total sum decomposes as a sum over
individual bars, each contributing zero:
\[
  \sum_{i=0}^{k-1} \Delta_{\mathbf{n}}(v_i \to v_{i+1})
  = \sum_{\ell \in \mathcal{B}_{\mathbf{n}}(\mathcal{T})}
    \bigl(\text{signed crossings of $\ell$ by $\gamma$}\bigr)
  = 0.
\]

\emph{Part~(ii): Potential existence.}
The function $\Delta_{\mathbf{n}}$ is antisymmetric (Definition~\ref{def:bar-crossing})
and satisfies cycle closure (Part~(i)).
The graph $G_{\mathcal{T}}$ is connected (Remark~\ref{rem:graph-properties}).
The discrete Poincar\'{e} lemma (Theorem~\ref{thm:poincare}) yields the
potential $h_{\mathbf{n}}$.
Since $\Delta_{\mathbf{n}}$ takes values in $\{-1,0,+1\}$, the potential
takes values in $\ZZ$ (after normalizing $h_{\mathbf{n}}(v_0) = 0$).

\emph{Part~(iii): Identification with the Ammann height.}
The classical Ammann height function $h^{\mathrm{cl}}_{\mathbf{n}}$ is
defined by assigning to each vertex $v$ the signed number of bars of family
$\mathbf{n}$ crossed (with sign $+1$ if crossed in the $\mathbf{n}$-direction,
$-1$ if crossed in the $-\mathbf{n}$-direction) along \emph{any} path from a
reference vertex $v_0$ to $v$~\cite{GrunbaumShephard1987,BaakeGrimm2013}.
Path-independence of this count for a valid Penrose tiling is precisely what
Part~(i) (cycle closure) and Part~(ii) (Theorem~\ref{thm:poincare}) establish:
the path integral of $\Delta_{\mathbf{n}}$ is independent of the chosen path.

It remains to check that ``path integral of $\Delta_{\mathbf{n}}$'' and
``signed bar count along the path'' are the same thing.
By definition of $\Delta_{\mathbf{n}}$ (Definition~\ref{def:bar-crossing}
and Definition~\ref{def:half-edge-crossing}), each directed edge $(u\to v)$
contributes $+1$, $-1$, or $0$ according to whether a bar of family $\mathbf{n}$
is crossed in the $\mathbf{n}$-direction, the $-\mathbf{n}$-direction, or not at
all.  Hence $\sum_{e \in \gamma} \Delta_{\mathbf{n}}(e)$ is exactly the signed
bar count along $\gamma$.

Normalising $h_{\mathbf{n}}(v_0) = 0 = h^{\mathrm{cl}}_{\mathbf{n}}(v_0)$, we
conclude $h_{\mathbf{n}} = h^{\mathrm{cl}}_{\mathbf{n}}$.
\end{proof}

\subsection{Matching rules as conservation laws}
\label{sec:conservation}

The following theorem is the main result of this paper: it shows that, for any candidate Penrose tiling, the matching rules, bar continuity, cycle closure, and height-function existence are all equivalent.
Throughout this subsection, ``conservation law'' refers to the discrete cocycle (closedness) condition for a $1$-cochain in CW cohomology; this is a purely algebraic condition and does not invoke any Noether-type symmetry principle.

Proposition~\ref{prop:height-potential} establishes the cochain--potential correspondence for a \emph{valid} tiling. The following theorem extends the picture to \emph{candidate} tilings by showing that matching rules, bar continuity, cycle closure, and height-function existence are all equivalent---without presupposing any of them.
The classical equivalence (i)$\Leftrightarrow$(ii) is recalled from \cite{GrunbaumShephard1987,BaakeGrimm2013}; the remaining implications follow from the half-edge/gluing construction of \S\ref{sec:bar-postings}.

\begin{theorem}[Matching rules as conservation laws]
\label{thm:matching-conservation}
Let $\mathcal{T}$ be a candidate decorated Penrose tiling in the sense of
Definition~\ref{def:candidate-tiling} (not necessarily satisfying the matching
rules).
The following conditions are equivalent:
\begin{enumerate}[label=(\roman*)]
\item \textbf{(Matching rules.)}
$\mathcal{T}$ satisfies the Penrose matching rules (edge decorations are
consistent across all shared edges).

\item \textbf{(Bar continuity.)}
For each of the five bar family directions, the Ammann bars of $\mathcal{T}$
form continuous straight lines (no bar terminates inside the tiling), i.e.,
every component of $\mathcal{B}_{\mathbf{n}}(\mathcal{T})$ is an entire line
for each bar-family normal~$\mathbf{n}$
(Definition~\ref{def:local-global-bars}).

\item \textbf{(Cycle closure.)}
For each bar family direction $\mathbf{n}$, the gluing condition
\eqref{eq:gluing-condition} holds on every interior edge---equivalently
(Lemma~\ref{lem:gluing}(ii)), the global bar-crossing cochain
$\Delta_{\mathbf{n}}\colon\vec{E}\to\{-1,0,+1\}$ exists---and
$\Delta_{\mathbf{n}}$ satisfies cycle closure on $G_{\mathcal{T}}$.

\item \textbf{(Height function.)}
For each bar family $\mathbf{n}$, a function $h_{\mathbf{n}}\colon V\to\ZZ$
exists such that
\begin{equation}
  h_{\mathbf{n}}(v) - h_{\mathbf{n}}(u) = \Delta_{\mathbf{n}}^{(t)}(u\to v)
\label{eq:height-consistency}
\end{equation}
for every directed edge $(u\to v)$ and every tile $t$ incident to $\{u,v\}$.
\end{enumerate}
\end{theorem}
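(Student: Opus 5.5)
I will prove the cycle of implications (i)$\Rightarrow$(iii)$\Rightarrow$(iv)$\Rightarrow$(i), and attach (ii) through the classical equivalence (i)$\Leftrightarrow$(ii) recalled from \cite{GrunbaumShephard1987,BaakeGrimm2013}. Nothing in this chain presupposes validity. Every object used is either a tile-side function $\Delta^{(t)}_{\mathbf{n}}$, which exists for any candidate tiling by Definition~\ref{def:half-edge-crossing} and Lemma~\ref{lem:local-bars-determined}, or a global cochain whose existence is itself part of a condition.

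\emph{(i)$\Rightarrow$(iii).} Matching rules give the gluing condition \eqref{eq:gluing-condition} for every family and every interior edge, by Lemma~\ref{lem:gluing}(i). For the relevant family $\mathbf{n}^*$ this is the ($\Leftarrow$) direction; for the other four families it holds trivially. By Lemma~\ref{lem:gluing}(ii), the global cochain $\Delta_{\mathbf{n}}$ then exists. For cycle closure I would not use bar continuity, to avoid leaning on (ii). Instead I would use a purely combinatorial argument. Since $G_{\mathcal{T}}$ is the planar $1$-skeleton of the tiling CW complex (Remark~\ref{rem:graph-properties}), every closed walk $\gamma$ is, in $C_1$, a finite sum of tile boundaries $\partial t$, counted with integer multiplicities. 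This is because $H_1(\RR^2)=0$: the walk bounds a finite $2$-chain made of the finitely many tiles it encloses, with winding-number multiplicities. On each $\partial t$ the global cochain agrees with $\Delta^{(t)}_{\mathbf{n}}$, and that sum vanishes by Lemma~\ref{lem:tile-sum-zero}. Hence $\sum_\gamma\Delta_{\mathbf{n}}=0$. As an alternative, the half-plane crossing argument of Proposition~\ref{prop:height-potential}(i) also works once (ii) is known.

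\emph{(iii)$\Rightarrow$(iv).} Apply the discrete Poincar\'e lemma (Theorem~\ref{thm:poincare}) on the connected graph $G_{\mathcal{T}}$ with integer values, normalising $h_{\mathbf{n}}(v_0)=0$. This gives $h_{\mathbf{n}}(v)-h_{\mathbf{n}}(u)=\Delta_{\mathbf{n}}(u\to v)$. By Lemma~\ref{lem:gluing}(ii), $\Delta_{\mathbf{n}}(u\to v)$ equals $\Delta^{(t)}_{\mathbf{n}}(u\to v)$ for every incident tile $t$, which is exactly \eqref{eq:height-consistency}.

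\emph{(iv)$\Rightarrow$(i).} Let $\{u,v\}$ be an interior edge shared by $A$ and $B$, and let $\mathbf{n}^*$ be its unique relevant family. Applying \eqref{eq:height-consistency} for $\mathbf{n}^*$ with $t=A$ and with $t=B$ gives $\Delta^{(A)}_{\mathbf{n}^*}(u\to v)=h_{\mathbf{n}^*}(v)-h_{\mathbf{n}^*}(u)=\Delta^{(B)}_{\mathbf{n}^*}(u\to v)$. The ($\Rightarrow$) direction of Lemma~\ref{lem:gluing}(i) then shows the markings agree. Since the edge was arbitrary, the full matching rule holds everywhere. Combined with (i)$\Leftrightarrow$(ii), this closes the equivalence.

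The step I expect to be delicate is the cycle-closure argument in (i)$\Rightarrow$(iii). One must justify that an arbitrary closed walk in an infinite planar tiling decomposes into finitely many tile boundaries. I would handle this by taking the compact region swept by $\gamma$, restricting to the finite subcomplex of tiles meeting its convex hull, and using $H_1=0$ of that disc-like subcomplex. This approach also needs to handle edges on the patch boundary, where only one tile-side value exists, but those edges are covered by the definition in Lemma~\ref{lem:gluing}(ii).
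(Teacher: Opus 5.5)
Your proof is correct, but it is organised differently from the paper's.

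\textbf{The paper's route.} It runs the single cycle (i)$\Rightarrow$(ii)$\Rightarrow$(iii)$\Rightarrow$(iv)$\Rightarrow$(i). It imports only the classical direction (i)$\Rightarrow$(ii) and proves (ii)$\Rightarrow$(iii) itself. A continuous bar through a shared edge lies in both incident tiles, which forces the gluing condition. Cycle closure then comes from the half-plane argument of Proposition~\ref{prop:height-potential}(i): a closed walk crosses each entire line equally often in each direction.

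\textbf{Your route.} You prove (i)$\Rightarrow$(iii) directly. Gluing comes from Lemma~\ref{lem:gluing}(i), and cycle closure from face-sum vanishing (Lemma~\ref{lem:tile-sum-zero}) plus $H_1(X_{\mathcal{T}};\ZZ)=0$. This is precisely the argument the paper gives only afterwards, in \S\ref{sec:cohomology}. You then attach (ii) through the classical biconditional in both directions.

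\textbf{What each approach buys.} With your route, the core equivalence (i)$\Leftrightarrow$(iii)$\Leftrightarrow$(iv) no longer depends on bar geometry or on any citation. In the paper, even (i)$\Rightarrow$(iv) passes through the cited (i)$\Rightarrow$(ii). Your cycle-closure step is also the one that carries over verbatim to CPTs via Lemma~\ref{lem:face-sum-general}. The cost is that (ii) is linked to the other conditions only by citation, including the converse (ii)$\Rightarrow$(i). So the implication (ii)$\Rightarrow$(iii), which the paper advertises as proved by the half-edge/gluing construction, is not established by your argument.

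\textbf{The step you flag as delicate.} No convex-hull restriction is needed, and simple connectivity of the union of tiles meeting the hull would itself need proof, since darts are non-convex. Cellular chains on the locally finite CW structure $X_{\mathcal{T}}\cong\RR^2$ are finite by definition. Hence $H_1^{\mathrm{cell}}(X_{\mathcal{T}})=0$ directly yields a finite $2$-chain with boundary $\gamma$. Since every edge of a tiling of $\RR^2$ is shared by two tiles, there are no patch-boundary edges to handle.
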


\begin{proof}
\emph{(i)~$\Rightarrow$~(ii).}
This is a classical result: matching rules force Ammann bar segments
(determined locally by tile decorations) to join consistently across
shared edges, producing infinite straight
lines~\cite{GrunbaumShephard1987,BaakeGrimm2013}.

\emph{(ii)~$\Rightarrow$~(iii).}
Bar continuity means every bar of family $\mathbf{n}$ is an entire straight
line crossing the tiling.  In particular, whenever a bar crosses a shared
interior edge $\{u,v\}$, it enters tile $A$ through $\{u,v\}$ and exits tile
$B$ through the same edge (or vice versa), so both tiles assign the same
direction of crossing to $\{u,v\}$.
Hence $\Delta_{\mathbf{n}}^{(A)}(u\to v) = \Delta_{\mathbf{n}}^{(B)}(u\to v)$
for every interior edge: the gluing condition~\eqref{eq:gluing-condition} holds,
and by Lemma~\ref{lem:gluing}(ii) the global cochain $\Delta_{\mathbf{n}}$
exists.
Cycle closure then follows by the same argument as in
Proposition~\ref{prop:height-potential}(i): since each bar is an entire straight line, any
closed walk crosses it equally often in each direction, giving zero signed
crossing count.

\emph{(iii)~$\Rightarrow$~(iv).}
By assumption, $\Delta_{\mathbf{n}}$ exists as a global antisymmetric function
and satisfies cycle closure.
The graph $G_{\mathcal{T}}$ is connected (Remark~\ref{rem:graph-properties}).
By the discrete Poincar\'{e} lemma (Theorem~\ref{thm:poincare}), there exists
$h_{\mathbf{n}}\colon V \to \ZZ$ with
$\Delta_{\mathbf{n}}(u\to v) = h_{\mathbf{n}}(v) - h_{\mathbf{n}}(u)$.
Since $\Delta_{\mathbf{n}}(u\to v)$ is the common value of
$\Delta_{\mathbf{n}}^{(A)}(u\to v)$ and $\Delta_{\mathbf{n}}^{(B)}(u\to v)$
for any tiles $A,B$ incident to $\{u,v\}$ (Lemma~\ref{lem:gluing}(ii)),
condition~\eqref{eq:height-consistency} holds for every edge and every
incident tile.

\emph{(iv)~$\Rightarrow$~(i).}
Let $\{u,v\}$ be any interior edge shared by tiles $A$ and $B$.
By condition~\eqref{eq:height-consistency},
\[
  \Delta_{\mathbf{n}}^{(A)}(u\to v)
  \;=\; h_{\mathbf{n}}(v) - h_{\mathbf{n}}(u)
  \;=\; \Delta_{\mathbf{n}}^{(B)}(u\to v).
\]
Hence the gluing condition~\eqref{eq:gluing-condition} holds on $\{u,v\}$,
and by Lemma~\ref{lem:gluing}(i) the Penrose matching rule holds on $\{u,v\}$.
Since this applies to every interior edge and every bar family, (i) follows.
\end{proof}

\begin{remark}[The conservation-law interpretation]
\label{rem:conservation}
Each edge $\{u,v\}$ shared by tiles $A$ and $B$ is a comparison event. The antisymmetry condition requires $\Delta_\mathbf{n}(u\to v) = -\Delta_\mathbf{n}(v\to u)$: for a fixed orientation of the edge, the value at the directed edge $(u\to v)$ is the negative of the value at the reverse $(v\to u)$.
The gluing (matching) condition is \emph{separate}: it requires that $\Delta_\mathbf{n}^{(A)}(u\to v) = \Delta_\mathbf{n}^{(B)}(u\to v)$, i.e., both incident tiles assign the \emph{same} value to the directed edge $(u\to v)$. This is a two-sided consistency condition: the two incident tiles must assign
the same crossing value to the same directed edge; antisymmetry applies only to
reversing the direction.
A matching-rule violation at $\{u,v\}$ means $\Delta^{(A)} \neq
\Delta^{(B)}$, breaking the continuity of the Ammann bar and destroying
cycle closure and the global potential.
In this sense, matching rules are the tiling-theoretic expression of
conservation of the height potential.
\end{remark}

\subsection{Five-family validity criterion and pentagrid reconstruction}
\label{sec:five-family}

The preceding results treat one bar family at a time.
Assembling all five simultaneously yields a joint validity criterion and,
via identification of the height functions with de~Bruijn's strip
indices~\cite{deBruijn1981}, an explicit formula recovering every vertex
position from five integers.
Label the five Ammann bar families by
$\mathbf{n}_k = (\cos(2\pi k/5),\, \sin(2\pi k/5))$, $k = 0,1,2,3,4$.

\begin{theorem}[Five-family validity and pentagrid reconstruction]
\label{thm:five-family}
Let $\mathcal{T}$ be a candidate Penrose tiling, and fix a reference
vertex $v_0 \in V$ with $h_{\mathbf{n}_k}(v_0) = 0$ for all $k$.
\begin{enumerate}[label=(\roman*)]
\item \textbf{(Full validity criterion.)}
$\mathcal{T}$ satisfies the Penrose matching rules if and only if, for
every $k \in \{0,1,2,3,4\}$, a globally consistent height function
$h_{\mathbf{n}_k}\colon V \to \ZZ$ exists satisfying the
height-consistency equation~\eqref{eq:height-consistency} for every
directed edge $(u\to v)$ and every tile $t$ incident to $\{u,v\}$.

\item \textbf{(Pentagrid reconstruction.)}
If $\mathcal{T}$ is valid, then for every vertex $v \in V$:
\begin{equation}
  v - v_0
  \;=\;
  \frac{2}{5}
  \sum_{k=0}^{4} h_{\mathbf{n}_k}(v)\,\mathbf{n}_k.
\label{eq:pentagrid-reconstruction}
\end{equation}
In particular, the map
$\Phi\colon V \to \ZZ^5$,\; $\Phi(v) = (h_{\mathbf{n}_0}(v),\ldots,h_{\mathbf{n}_4}(v))$,
is \emph{injective}: distinct vertices carry distinct height-function
$5$-tuples.
The image $\Phi(V)$ lies in a $3$-dimensional affine subspace
$\mathcal{L} \subset \RR^5$, and within $\mathcal{L}$ the discrete set
$\Phi(V)\subset\ZZ^5$ forms the standard lifted stepped surface of the
Penrose cut-and-project construction.  The tiling $\mathcal{T}$ can be
reconstructed from $\Phi(V)$ via de~Bruijn's pentagrid
duality~\cite{deBruijn1981}.
\end{enumerate}
\end{theorem}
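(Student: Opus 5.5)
Part~(i) follows by combining Theorem~\ref{thm:matching-conservation} across all five families. For ($\Rightarrow$), assume the matching rules hold. The implication (i)$\Rightarrow$(iv) of Theorem~\ref{thm:matching-conservation} then gives, for each $k$, a height function $h_{\mathbf{n}_k}$ satisfying \eqref{eq:height-consistency}. We normalise it by subtracting the constant $h_{\mathbf{n}_k}(v_0)$; this is allowed by the uniqueness up to constants in Theorem~\ref{thm:poincare}. For ($\Leftarrow$), I apply the argument of (iv)$\Rightarrow$(i) edge by edge. For an interior edge $\{u,v\}$, let $\mathbf{n}^*$ be the unique relevant family from Remark~\ref{rem:transversality}(iii). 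Since $h_{\mathbf{n}^*}$ exists, the two tile-side values of $\mathbf{n}^*$ on $\{u,v\}$ must agree. By Lemma~\ref{lem:gluing}(i) this agreement is exactly the full Penrose matching rule on that edge. The point to state clearly is that the five families together cover every edge-direction class, so every interior edge is checked.

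For part~(ii), I would not try to prove \eqref{eq:pentagrid-reconstruction} directly by summing over geometry. Instead I would reduce it to the CPT reconstruction formula, Proposition~\ref{prop:cpt-reconstruction}, in its Penrose form, Remark~\ref{rem:penrose-specialisation}. The key step is to show that, for a valid tiling, $h_{\mathbf{n}_k}(v)-h_{\mathbf{n}_k}(v_0)=x_k(v)-x_k(v_0)$, where $x_k$ are the lattice coordinates of Definition~\ref{def:lattice-coord}. Both sides are potentials of antisymmetric cochains, namely $\Delta_{\mathbf{n}_k}$ and $\delta_0 x_k$, and both vanish at $v_0$. By the uniqueness in Theorem~\ref{thm:poincare} and connectivity of $G_{\mathcal{T}}$, it is enough to check that $\Delta_{\mathbf{n}_k}(u\to v)=x_k(v)-x_k(u)$ on every directed edge.

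That edgewise comparison is the step I expect to be the main obstacle. It is a finite check by Lemma~\ref{lem:cpt-local-step}: each lattice step is a Voronoi adjacency vector determined by the edge position within its prototile. Likewise, the crossing value is determined by the decoration (Remark~\ref{rem:decoration-to-crossing}). So the plan is to go through the edge-direction classes one at a time, using the tables in Appendix~\ref{app:tile-calcs}. The aim is to show that an edge of class $\alpha$ changes exactly one lattice coordinate, namely the one belonging to the family $\mathbf{n}^*_\alpha$, by $\pm1$. The sign should match $\operatorname{sign}(\mathbf{n}_k\cdot(v-u))$, because $\mathbf{e}_k^*=\tfrac25\mathbf{n}_k$. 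There is a real risk here. P2 edges are not all unit lattice steps in the way P3 rhombus edges are; the long dart and kite edges may correspond to sums of basis vectors. If so, I would fall back to an alternative. That alternative is to use the valid-tiling identification of the Ammann bars with de~Bruijn's pentagrid lines, which follows from bar continuity (Theorem~\ref{thm:matching-conservation}(ii)). Under that identification, $h_{\mathbf{n}_k}$ counts grid lines crossed, which is the $k$-th de~Bruijn strip index, so it agrees with $x_k$ up to the constant fixed by $v_0$.

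Once $h_{\mathbf{n}_k}=x_k-x_k(v_0)$ is established, the rest is routine. Subtracting the reconstruction identity at $v_0$ from the identity at $v$ gives \eqref{eq:pentagrid-reconstruction}. Injectivity of $\Phi$ is immediate, since $v$ is a function of $\Phi(v)$. For the affine subspace $\mathcal{L}$, I use the fact that $\sum_k x_k$ takes only finitely many values; in the standard scheme it is constant on each connected tiling's vertex set. Therefore $\Phi(V)$ lies in a hyperplane intersected with a slab, and after fixing the component it lies in the $3$-dimensional affine space spanned by $\pE$ and the $(1,\dots,1)$-complement direction. The closing reconstruction claim is cited from de~Bruijn~\cite{deBruijn1981}.
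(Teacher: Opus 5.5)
Your part (i) is the paper's argument: apply Theorem~\ref{thm:matching-conservation}, (i)$\Leftrightarrow$(iv), to each family, and note that every interior edge is tested by its one relevant family. That part is fine.

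Part (ii) has a genuine gap, exactly where you feared, and it cannot be closed. The edgewise identity $\Delta_{\mathbf{n}_k}=\delta_0 x_k$ is impossible. By Remark~\ref{rem:transversality}(iii)--(iv), on every edge exactly one family $k^*$ has $\Delta_{\mathbf{n}_{k^*}}(u\to v)=\pm1$, and the other four vanish. The identity together with Proposition~\ref{prop:cpt-reconstruction} would then give $v-u=\pm\mathbf{e}^*_{k^*}=\pm\tfrac{2}{5}\mathbf{n}_{k^*}$ on \emph{every} edge. So all edges would have one length, whereas P2 has short and long edges in ratio $1:\phiGR$.

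Concretely, identify $\RR^2$ with the complex plane and put $\zeta=e^{2\pi i/5}$, so that $\mathbf{e}_k^*=\tfrac{2}{5}\zeta^k$. Scale short edges to length $\tfrac{2}{5}$. Then a long kite edge pointing at $36^\circ$ is $\tfrac{2}{5}\phiGR e^{i\pi/5}=\mathbf{e}_0^*+\mathbf{e}_1^*$. Every lattice vector projecting onto it has the form $\mathbf{e}_0+\mathbf{e}_1+m(1,1,1,1,1)$, which moves at least two coordinates. The crossing convention of Lemma~\ref{lem:gluing}, by contrast, gives this edge the single height step $-\mathbf{e}_3$.

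The fallback does not rescue this. It is in substance the paper's own proof, which obtains $h_{\mathbf{n}_k}=m_k^{\mathrm{dB}}$ by \emph{defining} the strip index as a count of Ammann bars and then quoting de~Bruijn's formula. Bar continuity does not supply that identification. It makes each bar a full line but says nothing about where the lines sit. De~Bruijn's grid lines are equally spaced, while the Ammann bars have gaps $S$ and $L=\phiGR S$.

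More decisively, the contradiction above used only \eqref{eq:pentagrid-reconstruction}, \eqref{eq:height-consistency} and Remark~\ref{rem:transversality}. It therefore blocks \emph{any} route to \eqref{eq:pentagrid-reconstruction} under these conventions. Independently, with the paper's unit short edges, a neighbour $v$ of $v_0$ would need $|\sum_k h_{\mathbf{n}_k}(v)\zeta^k|=\tfrac{5}{2}$. That is impossible, because $|\alpha|^2$ is an algebraic integer for $\alpha\in\ZZ[\zeta]$ while $\tfrac{25}{4}$ is not.

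Your affine-subspace step also fails. A constant $\sum_k x_k$ yields a hyperplane, not a $3$-dimensional $\mathcal{L}$. Its premise also contradicts de~Bruijn, whose vertex index $\sum_k K_k(v)$ (his strip indices $K_k$) runs through $\{1,2,3,4\}$. In fact, under the same conventions $\Phi(v)-\Phi(u)=\pm\mathbf{e}_{k^*}$ on each edge, and all five edge classes occur. Hence the affine hull of $\Phi(V)$ is all of $\RR^5$, and the paper's asserted relation $\sum_k m_k^{\mathrm{dB}}=\text{const}$ already fails across every single edge.

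What your proposal (and the paper's argument) actually establishes is part (i). Part (ii) would need a different notion of height before any reconstruction formula of this type can be proved, for example de~Bruijn's strip indices for the rhombus tiling.
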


\begin{proof}
\emph{Part~(i).}
By Lemma~\ref{lem:gluing}(i), each interior edge $\{u,v\}$ has a unique
``relevant'' bar family $\mathbf{n}^*(u,v)$ (the one whose bars cross
edges of that geometric type); the matching rule on $\{u,v\}$ holds if
and only if the gluing condition for $\mathbf{n}^*(u,v)$ holds on
$\{u,v\}$.
Since the five bar families correspond to the five edge-direction classes
of P2 Penrose tilings, every interior edge belongs to one class, so every
edge is tested by its one relevant family.
Applying Theorem~\ref{thm:matching-conservation} (condition~(i)
$\Leftrightarrow$ condition~(iv)) to each family $\mathbf{n}_k$
separately, and taking the conjunction over $k = 0,\ldots,4$:
the full matching rule holds on every edge $\Leftrightarrow$ height
functions exist for all five families.

\emph{Part~(ii): strip-index identification.}
De~Bruijn's strip index $m_k^\mathrm{dB}(v)$ for vertex $v$ in family $k$
is the unique integer $m$ such that $v$ lies between the $m$th and
$(m+1)$th bar of family $k$, counted from the reference vertex $v_0$ in
the $\mathbf{n}_k$-direction, with the sign convention
\[
  m_k^\mathrm{dB}(v) > m_k^\mathrm{dB}(v_0)
  \;\text{ iff }\; \mathbf{n}_k\cdot(v - v_0) > 0.
\]
We claim $m_k^\mathrm{dB}(v) = h_{\mathbf{n}_k}(v)$ for all $v\in V$.

By Definition~\ref{def:half-edge-crossing}, for each directed edge
$(u\to v)$ the value $\Delta_{\mathbf{n}_k}(u\to v) = +1$ if a bar of
family $k$ is crossed with $\mathbf{n}_k\cdot(v-u)>0$ (the bar is crossed
in the increasing-$m$ direction), $-1$ if crossed with
$\mathbf{n}_k\cdot(v-u)<0$, and $0$ if no bar is crossed.
Hence the path integral $\sum_{e\in\gamma}\Delta_{\mathbf{n}_k}(e)$ from
$v_0$ to $v$ counts the net number of bars crossed in the
$+\mathbf{n}_k$-direction minus those crossed in the $-\mathbf{n}_k$-direction.
This net signed count equals $m_k^\mathrm{dB}(v) - m_k^\mathrm{dB}(v_0) =
m_k^\mathrm{dB}(v)$ (using $m_k^\mathrm{dB}(v_0)=0$).
Therefore $h_{\mathbf{n}_k}(v) = m_k^\mathrm{dB}(v)$ for every $v$.

\emph{Reconstruction formula.}
De~Bruijn's projection formula~\cite[eq.~(2.2)]{deBruijn1981} gives
\[
  v - v_0
  = \frac{2}{5}\sum_{k=0}^{4} m_k^\mathrm{dB}(v)\,\mathbf{n}_k
  = \frac{2}{5}\sum_{k=0}^{4} h_{\mathbf{n}_k}(v)\,\mathbf{n}_k,
\]
using the identity
$\sum_{k=0}^4 (\mathbf{n}_k \otimes \mathbf{n}_k) = \tfrac{5}{2} I_2$,
which holds because $\mathbf{n}_k = (\cos(2\pi k/5),\sin(2\pi k/5))$ are
five equally spaced unit vectors: the diagonal entry
$\sum_k\cos^2(2\pi k/5) = \tfrac{5}{2}$ (using $\cos^2\theta =
\tfrac{1+\cos 2\theta}{2}$ and $\sum_{k=0}^4\cos(4\pi k/5)=0$), and the
off-diagonal entry $\sum_k\cos(2\pi k/5)\sin(2\pi k/5) = \tfrac{1}{2}\sum_k
\sin(4\pi k/5) = 0$.

\emph{Injectivity.}
If $\Phi(u) = \Phi(v)$, then $u - v = \tfrac{2}{5}\sum_k
[h_{\mathbf{n}_k}(u)-h_{\mathbf{n}_k}(v)]\mathbf{n}_k = \mathbf{0}$
by~\eqref{eq:pentagrid-reconstruction}, so $u = v$.
Hence $\Phi$ is injective.

\emph{Image in a 3-dimensional affine subspace (and the lifted surface viewpoint).} The five strip indices are not independent: the pentagrid constraint (cf.~\cite[Sec.~2]{deBruijn1981}) is that $\sum_{k=0}^4 m_k^\mathrm{dB}(v)$ is constant on $V$ (the value depends only on the choice of reference vertex and the pentagrid offsets, not on $v$), and a second linear relation
\[
  \sum_{k=0}^{4}(-1)^k\,m_k^\mathrm{dB}(v) \;=\; \text{const}
\]
holds for all $v \in V$ (see~\cite[Sec.~2]{deBruijn1981}).
These two independent affine constraints cut out a $3$-dimensional affine subspace $\mathcal{L} \subset \RR^5$ containing $\Phi(V)$.
Within $\mathcal{L}$, the discrete set $\Phi(V)\subset\ZZ^5$ forms the standard \emph{lift} (a $2$-dimensional stepped surface) associated with the Penrose cut-and-project construction; see~\cite{deBruijn1981} for the explicit linear-algebraic description and its relation to the embedded physical plane in $\RR^5$.
\end{proof}

\subsection{Cohomological interpretation}
\label{sec:cohomology}

The cycle-closure condition has a natural cohomological reading that reinterprets the results of
\S\S\ref{sec:height-potential}--\ref{sec:five-family} in the language of cellular and pattern-equivariant cohomology. Let $X_{\mathcal{T}}$ denote the $2$-dimensional CW complex underlying the tiling: its $0$-cells are tiling vertices, its $1$-cells are tile edges, and its $2$-cells are the tile interiors attached along their boundary cycles.
Then $X_{\mathcal{T}}$ is homeomorphic to the plane and has $1$-skeleton $G_{\mathcal{T}}$.
The bar-crossing function $\Delta_{\mathbf{n}}$ can be viewed as a cellular $1$-cochain in $C^1(X_{\mathcal{T}};\ZZ)$ (equivalently, an antisymmetric function on directed edges). For a $2$-cell $f$ (a tile), the coboundary satisfies
\[
  (\delta \Delta_{\mathbf{n}})(f) \;=\; \sum_{e \in \partial f}\Delta_{\mathbf{n}}(e),
\]
so vanishing of the oriented boundary sums is the cocycle condition $\delta\Delta_{\mathbf{n}} = 0$.

For the planar complex $X_{\mathcal{T}}$, the cocycle condition $\delta\Delta_{\mathbf{n}}=0$ is equivalent to cycle closure on all closed walks, not merely simple cycles. The key step is the following: any closed walk $\gamma$ in $G_{\mathcal{T}}$ can be decomposed, in $C_1(X_{\mathcal{T}};\ZZ)$, as an integer linear combination of tile boundaries $\partial f_i$:
\begin{equation}
  \gamma \;=\; \sum_i \epsilon_i\,\partial f_i
  \quad\text{in } C_1(X_{\mathcal{T}};\ZZ),
\label{eq:cycle-decomp}
\end{equation}
where $\epsilon_i \in \{+1,-1\}$ are signs from the orientation, and the sum is finite. (This holds because $H_1(X_{\mathcal{T}};\ZZ)=0$---the plane $\mathbb{R}^2$ is simply connected---so every $1$-cycle is a $1$-boundary, i.e., a sum of
$2$-cell boundaries.)
Evaluating $\Delta_{\mathbf{n}}$ on both sides of~\eqref{eq:cycle-decomp}:
\[
  \sum_{e\in\gamma}\Delta_{\mathbf{n}}(e)
  = \sum_i \epsilon_i \sum_{e\in\partial f_i}\Delta_{\mathbf{n}}(e)
  = \sum_i \epsilon_i \cdot 0 = 0,
\]
using $\delta\Delta_{\mathbf{n}}=0$. Hence, face-sum vanishing implies cycle closure on all closed walks. The converse (cycle closure implies face-sum vanishing) is immediate since each tile boundary $\partial f$ is itself a closed walk.

The existence of a potential $h$ with $\Delta_{\mathbf{n}} = \delta_0 h$
is exactly the statement that $\Delta_{\mathbf{n}}$ is a coboundary, hence
$[\Delta_{\mathbf{n}}] = 0 \in H^1(X_{\mathcal{T}};\ZZ)$:
the $1$-cochain is \emph{exact}, not merely closed.

Moreover, $\Delta_{\mathbf{n}}$ is \emph{pattern-equivariant}: its value
on any edge depends only on a finite neighborhood of that edge
(the local tile type and orientation).
Pattern-equivariant cochains generate the \v{C}ech cohomology of the
tiling hull via the Kellendonk--Putnam isomorphism~\cite{KellendonkPutnam2006}.
For Penrose tilings, $\Hcech^1(\Omega_P;\ZZ) \cong \ZZ^5$, generated by
the five bar-crossing cocycles; see~\cite{Sadun2008}.
In the language of Forrest--Hunton--Kellendonk~\cite{ForrestHuntonKellendonk2002}, these five classes generate $\check{H}^1(\Omega_P;\ZZ)\cong\ZZ^5$.
Theorem~\ref{thm:five-family}(ii) adds an explicit consequence that is implicit in the FHK framework but not stated there: the five corresponding height functions jointly recover every vertex position via de~Bruijn's pentagrid formula~\eqref{eq:pentagrid-reconstruction}.
Each $\Delta_{\mathbf{n}}$ is exact as an ordinary 1-cocycle on the
contractible plane (Proposition~\ref{prop:height-potential}(ii)), but represents a
non-trivial class in pattern-equivariant cohomology: any potential $h_{\mathbf{n}}$ is necessarily non-local (not pattern-equivariant): $h_{\mathbf{n}}(v)$ is a path integral of $\Delta_{\mathbf{n}}$ from a reference vertex $v_0$, accumulating signed bar crossings over an arbitrarily long path; no finite neighbourhood of $v$ determines $h_{\mathbf{n}}(v)$. See~\cite{Sadun2008} or~\cite{ForrestHuntonKellendonk2002} for the formal statement.

The bar-crossing cochains thus exhibit a triad of properties---locally readable,
globally conserved, exact in ordinary cohomology but non-trivial in PE
cohomology---that will be formalised as conditions CF1--CF3 in
\S\ref{sec:conservation-forced}, where the resulting \emph{recognition gap}
is named and quantified (Remark~\ref{rem:recognition-gap-penrose}).

\subsection{Tiling validation}
\label{sec:algorithm}

\begin{proposition}[Linear-time tiling validation]
\label{prop:validation}
Let $\mathcal{T}$ be a finite simply-connected candidate decorated Penrose
tiling with $n$ vertices.
Then:
\begin{enumerate}[label=(\roman*),noitemsep]
\item The Penrose matching rules can be verified by checking, for each of the
five bar families, the gluing condition~\eqref{eq:gluing-condition} on every
interior edge.  By Euler's formula, the interior edge set has cardinality
$O(n)$, so the total cost of all five passes is $O(n)$.
\item The tiling satisfies the matching rules if and only if all five gluing
checks pass (Theorem~\ref{thm:five-family}(i)).
\item A gluing failure $\Delta_{\mathbf{n}}^{(A)}(u\to v)\neq\Delta_{\mathbf{n}}^{(B)}(u\to v)$
at edge $\{u,v\}$ identifies both the violating edge and the responsible bar
family, enabling targeted localisation of matching-rule violations.
\end{enumerate}
\end{proposition}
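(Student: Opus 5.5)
Part~(ii) follows directly from Theorem~\ref{thm:five-family}(i) once part~(i) is justified, so the plan is to carry out parts~(i) and~(iii) by a direct gluing-condition argument.

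\emph{Part~(i): linear-time checking.} By Lemma~\ref{lem:gluing}(i), the matching rule on an interior edge $\{u,v\}$ is equivalent to the gluing condition~\eqref{eq:gluing-condition} for the unique relevant family $\mathbf{n}^*(u,v)$; for all other families the condition holds trivially. Each tile-side value $\Delta^{(t)}_{\mathbf{n}}(u\to v)$ is read from $t$'s own marking in constant time, by Lemma~\ref{lem:local-bars-determined} together with a fixed finite lookup table. So one pass over the interior edges, comparing two values per edge per family, suffices. It remains to bound the number of edges. The tiling is finite and simply connected, so it is a connected planar graph. Its bounded faces are tiles, each with at least $3$ (in fact $4$) edges, and Euler's formula then gives $|E|\le 3n-6$. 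Hence five passes cost $O(n)$. Throughout, I will treat the combinatorial data (an adjacency list with tile incidences) as given, so that locating the two tiles across an edge also takes $O(1)$ time.

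\emph{Part~(ii): correctness.} The tiling satisfies the matching rules if and only if every interior edge satisfies its relevant gluing condition (Lemma~\ref{lem:gluing}(i), taken over all edges). This is exactly ``all five gluing checks pass''. Alternatively, the equivalence can be routed through Theorem~\ref{thm:five-family}(i) and Theorem~\ref{thm:matching-conservation}. The one subtlety is that those theorems are stated for tilings of $\RR^2$, whereas here the tiling is finite. For a finite simply connected patch, the direction (iii)$\Rightarrow$(iv) of Theorem~\ref{thm:matching-conservation} needs cycle closure on a patch with boundary. I would get this from the argument of \S\ref{sec:cohomology}: simple connectivity gives $H_1=0$, so every closed walk decomposes as in~\eqref{eq:cycle-decomp}, and Lemma~\ref{lem:tile-sum-zero} then gives face-sum vanishing. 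For the validation claim itself, though, only the edgewise equivalence of Lemma~\ref{lem:gluing}(i) is needed, which is purely local. I expect this adaptation to the finite setting to be the main, though mild, obstacle.

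\emph{Part~(iii): localisation.} A failure at $\{u,v\}$ for family $\mathbf{n}$ must have $\mathbf{n}=\mathbf{n}^*(u,v)$, since for any other family both sides vanish. By the injectivity in the proof of Lemma~\ref{lem:gluing}(i), the failure means the two markings disagree on that very edge. So the check reports precisely the violating edge and its relevant family, with no false positives and no false negatives.
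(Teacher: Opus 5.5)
Your proposal is correct and follows the paper's proof. Lemma~\ref{lem:gluing}(i) reduces the matching rule on each interior edge to the gluing check for its relevant family, and Euler's bound $|E|\le 3n-6$ gives the $O(n)$ total cost. You derive (ii) and (iii) edgewise from Lemma~\ref{lem:gluing}(i), rather than via Theorem~\ref{thm:five-family}(i) and Lemma~\ref{lem:gluing}(ii) as the paper does; this is slightly cleaner, since that lemma is purely local and transfers verbatim to a finite patch. One small slip is in (iii): ``failure $\Rightarrow$ markings differ'' needs only that the tile-side value is a function of the marking, whereas the injectivity you cite gives the converse, that no violation is missed.
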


\begin{proof}
By Lemma~\ref{lem:gluing}(i), the gluing condition on a given edge $\{u,v\}$
for the unique relevant family $\mathbf{n}^*$ is equivalent to the Penrose
matching rule on $\{u,v\}$.
A planar graph on $n$ vertices has $O(n)$ edges by Euler's formula ($|E|\leq 3n-6$),
so five passes over the edge set cost $O(n)$ in total.
Correctness (part~(ii)) follows immediately from
Theorem~\ref{thm:five-family}(i); error localisation (part~(iii)) follows
from Lemma~\ref{lem:gluing}(ii).
\end{proof}

\begin{remark}[Linear-time validation with error localisation]
\label{rem:algorithm}
Theorem~\ref{thm:matching-conservation} yields an immediate validation procedure for a finite simply connected candidate patch $\mathcal{T}$ with $n$ vertices: for each of the five bar families, check the gluing condition~\eqref{eq:gluing-condition} on every interior edge.
By Lemma~\ref{lem:gluing}(i), each such check is equivalent to the matching rule on that edge for the unique relevant family. By Euler's formula, a planar graph with $n$ vertices has $O(n)$ edges,
so five passes suffice and the total cost is $O(n)$.

A naive decoration check (comparing arrow/arc markings on each shared edge) also runs in $O(n)$.  The point of the cochain formulation is not asymptotic speed but \emph{structured diagnosis}: a gluing failure $\Delta_{\mathbf{n}}^{(A)}(u\to v)\neq\Delta_{\mathbf{n}}^{(B)}(u\to v)$ identifies both the violating edge $\{u,v\}$ and the responsible bar family~$\mathbf{n}$, enabling targeted repair of the candidate tiling.

By Theorem~\ref{thm:five-family}(i), the tiling is valid if and only if all five checks pass. Since the five edge classes are disjoint, the five passes can be run in parallel; the total work remains $O(n)$.
\end{remark}

The bar-crossing construction of Section~\ref{sec:main} is developed in full detail for Penrose P2; the CPT framework of \S\ref{sec:cpt-background} already shows how it extends.
Section~\ref{sec:conservation-forced} asks whether the structural triad identified here---locally readable cochains (CF1--CF2), global conservation (CF1), and non-local potentials (CF3)---characterises a broader class of aperiodic tilings.
The answer is affirmative for all primitive-substitution canonical projection tilings, and the precise formulation is Definition~\ref{def:conservation-forced} and Theorem~\ref{thm:cpt-conservation-forced}.

\section{Conservation-Forced Aperiodic Order}
\label{sec:conservation-forced}

Section~\ref{sec:main} established the conservation structure for Penrose P2 in detail. This section asks: does that structure generalise? The answer is yes, for the entire class of primitive-substitution canonical projection tilings (CPTs). We formalise the essential properties as a five-condition definition (Definition~\ref{def:conservation-forced}), prove a single general theorem covering all CPTs (Theorem~\ref{thm:cpt-conservation-forced}), verify the four standard families explicitly, and identify the open classification problem that frames the entire section.

The results of \S\ref{sec:main} establish that the five Penrose bar-crossing cochains are (i)~locally computable (pattern-equivariant), (ii)~globally conserved (cycle closure holds), and (iii)~exact in ordinary cohomology but non-trivial in pattern-equivariant cohomology. This is the formal version of the conservation-law reading of matching rules developed in Section~\ref{sec:conservation}.
We will show that this triad is shared by Penrose tilings, Ammann--Beenker tilings, the Fibonacci chain, and the icosahedral Ammann tiling, suggesting it may characterise the broader class of primitive-substitution CPTs.

The section is organised as follows. In \S\ref{sec:cf-definition} we define conservation-forced tiling systems (CF1--CF5) and state the open conjecture (Conjecture~\ref{conj:cf}) that frames the section. In \S\ref{sec:cpt-main} we prove the general CPT theorem. Sections~\ref{sec:penrose-cf}--\ref{sec:icosahedral} verify the four standard families. Section~\ref{sec:recognition-gap} introduces the recognition gap, a cohomological measure of how much global information each conservation law encodes. Section~\ref{sec:cf-conjecture} summarises and states open problems.

\subsection{Conservation-forced tiling systems}
\label{sec:cf-definition}

A cochain $\omega \in C^1(G_{\mathcal{T}};\ZZ)$ is
\emph{pattern-equivariant (PE) of radius~$R$} if its value on each directed edge depends only on the $R$-patch centred at that edge.
The subcomplex of PE cochains has cohomology $H^1_{PE}(G_{\mathcal{T}};\ZZ)$, and the Kellendonk--Putnam isomorphism gives $H^1_{PE} \cong \Hcech^1(\Omega_{\mathcal{T}};\ZZ)$~\cite{KellendonkPutnam2006}; see~\cite{Sadun2008,KellendonkPutnam2006} for full details.

\begin{definition}[Conservation-forced tiling system]
\label{def:conservation-forced}
A primitive substitution tiling system $(\mathcal{T},\sigma,M)$ of $\RR^d$ with finite local complexity is
\emph{conservation-forced of rank~$r$} if there exist PE $1$-cochains $\omega^{(1)},\ldots,\omega^{(r)}$ satisfying:
\begin{enumerate}[label=\textbf{(CF\arabic*)},noitemsep]
\item\label{CF1} Each $\omega^{(k)}$ is a cocycle: the oriented sum around the boundary of
      every $2$-cell vanishes (in $d=2$ this is ``every tile'').
\item\label{CF2} Each $\omega^{(k)}$ is PE: its value is locally readable.
\item\label{CF3} $[\omega^{(k)}]\neq 0$ in $H^1_{PE}$: the potential is non-local.
\item\label{CF4} The classes $[\omega^{(k)}]$ are $\ZZ$-linearly independent in
      $H^1_{PE}(G_{\mathcal{T}};\ZZ)$.
\item\label{CF5} The rank $r$ is maximal: $r = \operatorname{rank}_\ZZ H^1_{PE}(G_{\mathcal{T}};\ZZ)$,
      i.e., the classes $\{[\omega^{(k)}]\}$ form a $\ZZ$-basis for the
      free part of $H^1_{PE}$.
\end{enumerate}
The collection $\{\omega^{(k)}\}$ is a \emph{conservation basis};
$r$ is the \emph{conservation rank}.
\end{definition}

\begin{remark}[Terminology: ``conservation law'']
\label{rem:cf-terminology}
Here ``conservation'' is purely discrete/algebraic:
CF1 is the cocycle (closedness) condition in CW cohomology; we are \emph{not}
invoking a Noether-type symmetry principle.
\end{remark}

\begin{remark}[Torsion and redundancy of CF3]
In all examples considered here, $H^1_{PE}$ is torsion-free (isomorphic to $\ZZ^r$); the definition is stated for this case.  If $H^1_{PE}$ has torsion, CF5 should be read as $r = \operatorname{rank}_\ZZ H^1_{PE}$ (the rank of the free part), and CF3 is logically implied by CF4 in the torsion-free setting (a $\ZZ$-linearly independent set cannot contain the zero class).  We retain CF3 as a conceptually distinct condition emphasising that each individual potential is non-local.
\end{remark}

\begin{remark}[CF1--CF3 imply aperiodicity]
\label{rem:cf-aperiodic}
If $(\mathcal{T},\sigma,M)$ satisfies \textup{CF1--CF3}, then $\mathcal{T}$ is aperiodic. Indeed, if the tiling were periodic, the height function would also be periodic and thus bounded and PE, contradicting CF3.
\end{remark}

The key tool for establishing CF3 in every subsequent verification is the following lemma, which converts unboundedness of the primitive into non-triviality in PE cohomology.

\begin{lemma}[Unbounded heights yield non-trivial PE classes]
\label{lem:unbounded-nontrivial}
Let $\mathcal{T}$ be a tiling of $\RR^d$ with finite local complexity (FLC), and let $\Delta\in C^1(G_{\mathcal{T}};\ZZ)$ be a cocycle whose primitive $h\colon V\to\ZZ$ (i.e.\ $\Delta = \delta_0 h$) is unbounded on $V$.
Then $[\Delta]\neq 0$ in $H^1_{PE}(G_{\mathcal{T}};\ZZ)$, i.e.\ $\Delta$ is not a PE coboundary.
\end{lemma}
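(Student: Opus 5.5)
We argue by contraposition: assume $[\Delta]=0$ in $H^1_{PE}(G_{\mathcal{T}};\ZZ)$ and show that $h$ is bounded. By definition of the PE complex, $[\Delta]=0$ means there is a PE $0$-cochain $g\colon V\to\ZZ$ of some finite radius $R$ with $\Delta=\delta_0 g$, i.e.\ $g(v)$ depends only on the $R$-patch of $\mathcal{T}$ centred at $v$. We then compare $g$ with the given primitive $h$ and show that both are bounded.

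\textbf{Step 1: $g$ takes finitely many values.} FLC gives finitely many translation classes of $R$-patches centred at vertices. Since $g(v)$ is a function of the translation class of the $R$-patch at $v$, $g$ takes only finitely many values on $V$. In particular $g$ is bounded, say $|g|\le C$.

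\textbf{Step 2: $h$ and $g$ differ by a constant.} Since $\delta_0 h=\Delta=\delta_0 g$, the difference $h-g$ has vanishing coboundary on every directed edge. The graph $G_{\mathcal{T}}$ is connected (Remark~\ref{rem:graph-properties}; for general FLC tilings of $\RR^d$ we use that the $1$-skeleton of the tiling CW complex is connected). Hence $h-g\equiv c$ is constant. This is also the uniqueness clause of Theorem~\ref{thm:poincare}. Therefore $|h|\le C+|c|$ on $V$, contradicting unboundedness of $h$. So $[\Delta]\neq0$.

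\textbf{Main obstacle.} The main point needing care is the meaning of ``PE coboundary.” Step~1 requires the primitive in the PE complex to be an \emph{integer-valued} (or at least locally-determined) $0$-cochain whose value is a genuine function of the translation class of a finite patch. I would state this explicitly from the definition preceding Definition~\ref{def:conservation-forced}, namely that the PE subcomplex consists of cochains in all degrees that are PE of some finite radius. With that reading, FLC is exactly what converts ``locally determined'' into ``finitely many values.” I would also remark that the argument works verbatim with real coefficients. No appeal to the Kellendonk--Putnam isomorphism is needed.
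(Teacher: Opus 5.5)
Your proposal is correct and follows essentially the same argument as the paper. A PE primitive of finite radius takes only finitely many values by FLC, and connectivity of $G_{\mathcal{T}}$ forces it to differ from $h$ by a constant, so $h$ would be bounded; your contraposition is just the paper's contradiction argument rephrased.
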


\begin{proof}
Suppose for contradiction that $\Delta = \delta_0 f$ for some PE $0$-cochain $f\colon V\to\ZZ$ of radius~$R$.
Then $f(v)$ depends only on the $R$-patch at $v$; since $\mathcal{T}$ has FLC, there are only finitely many distinct $R$-patches, so $f$ takes only finitely many values. But $\Delta = \delta_0 h = \delta_0 f$ means $h - f$ is constant on~$V$ (the connected graph $G_{\mathcal{T}}$ has no locally constant non-constant functions); hence $h(v) = f(v) + c$ for a fixed $c\in\ZZ$, so $h$ is also bounded---a contradiction.
\end{proof}

The structural triad CF1--CF3 motivates the following classification problem, which frames Theorem~\ref{thm:cpt-conservation-forced} and the verifications in \S\S\ref{sec:penrose-cf}--\ref{sec:icosahedral}:

\begin{conjecture}[Partial: (b)$\Rightarrow$(a) proved; (a)$\Rightarrow$(b) open]
\label{conj:cf}
For a primitive substitution tiling system $(\mathcal{T},\sigma,M)$ of $\RR^d$ with finite local complexity, the following are equivalent:
\begin{enumerate}[label=\textup{(\alph*)},noitemsep]
\item $(\mathcal{T},\sigma,M)$ is conservation-forced.
\item $\mathcal{T}$ is a primitive-substitution canonical projection tiling arising from a lattice
$\Lambda\subset\RR^N$ and satisfying Definition~\ref{def:cpt}, and $\lambda_{\mathrm{PF}}(M)$ is a Pisot number.
\end{enumerate}
The implication \textup{(b)$\Rightarrow$(a)} is proved (in fact, without using the Pisot
assumption) by Theorem~\ref{thm:cpt-conservation-forced}.
The implication \textup{(a)$\Rightarrow$(b)} remains open.
\end{conjecture}

\subsection{Main theorem: primitive-substitution CPTs are conservation-forced}
\label{sec:cpt-main}

We now prove the implication (b)$\Rightarrow$(a) of Conjecture~\ref{conj:cf} for the CPT class. In fact, the argument below does not use the Pisot property: it shows that any CPT satisfying Definition~\ref{def:cpt}, equipped with a primitive substitution structure, is conservation-forced.

\begin{theorem}[Primitive-substitution CPTs are conservation-forced]
\label{thm:cpt-conservation-forced}
Let $(\mathcal{T},\sigma,M)$ be a canonical projection tiling in $\RR^d$ arising from a lattice $\ZZ^N$ and satisfying Definition~\ref{def:cpt}, equipped with a primitive substitution structure. Equivalently (hypotheses unpacked), we assume: \textup{(i)} $\pE$ is irrational (Definition~\ref{def:cpt}(i)), \textup{(ii)} $\pi|_{\Lambda}$ is injective (Definition~\ref{def:cpt}(ii)), \textup{(iii)} $W$ is generic (Definition~\ref{def:cpt}(iii)), \textup{(iv)} $\mathcal{T}$ has finite local complexity, and \textup{(v)} the substitution is primitive. Then, $(\mathcal{T},\sigma,M)$ is conservation-forced of rank $r = N$.
The conservation basis is $\{\Delta_1,\ldots,\Delta_N\}$ as defined
by~\eqref{eq:lattice-cochain}.
\end{theorem}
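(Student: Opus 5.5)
The plan is to verify CF1--CF5 for the cochains $\Delta_1,\ldots,\Delta_N$ of~\eqref{eq:lattice-cochain} one condition at a time, using the results already in place for CPTs. CF1 is immediate from Lemma~\ref{lem:face-sum-general}: each $\Delta_k=\delta_0 x_k$ is a coboundary of an integer $0$-cochain, so every $2$-cell boundary sum telescopes to zero. CF2 is Lemma~\ref{lem:cpt-local-step}, which uses hypothesis (iv) (FLC) to make the lattice step $\Phi(v)-\Phi(u)$, and hence each $\Delta_k$, a radius-$R$ pattern-equivariant function.

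For CF3 I would invoke Lemma~\ref{lem:unbounded-nontrivial}. It suffices to show that each primitive $x_k\colon V\to\ZZ$ is unbounded. Since $\pi_\perp(\Phi(v))\in W$ with $W$ compact, the $\pE$-component of $\Phi(v)$ is $v$ and its $\pEperp$-component is bounded. Because the vertex set $V$ is relatively dense in $\pE$, it contains points of arbitrarily large norm. If $x_k$ were bounded, then $\Phi(V)$ would lie in a slab $\{|x_k|\le C\}$ intersected with a tube around $\pE$. This forces $\pE$ to be contained in the hyperplane $\mathbf{e}_k^\perp$, that is, $\mathbf{e}_k^*$ is orthogonal to $\pE$ and so $\mathbf{e}_k^*=\pi(\mathbf{e}_k)=0$. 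That contradicts injectivity (ii), since $\mathbf{e}_k\ne 0$ would then lie in $\pEperp$. Hence each $x_k$ is unbounded and $[\Delta_k]\ne0$.

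For CF4 the same idea applies to integer combinations. If $\sum_k c_k\Delta_k$ were a PE coboundary with $c\in\ZZ^N\setminus\{0\}$, then by Lemma~\ref{lem:unbounded-nontrivial} the primitive $\langle c,\Phi(v)\rangle$ would be bounded on $V$. Running the tube argument again gives $c\perp\pE$, so $c\in\ZZ^N\cap\pEperp$, which is $\{0\}$ by (ii). This contradiction gives $\ZZ$-linear independence.

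CF5 is the main obstacle. It requires $\operatorname{rank}H^1_{PE}=N$ and that the $\Delta_k$ form a $\ZZ$-basis of the free part, and this cannot be obtained from the elementary arguments above. Here I would appeal to the Forrest--Hunton--Kellendonk computation for CPTs with generic window (hypothesis (iii)), transported to $H^1_{PE}$ via the Kellendonk--Putnam isomorphism. FHK identify the degree-one generators with the classes of the coordinate forms $dx_k$, which in the PE cochain complex are represented by exactly the $\Delta_k$. The delicate point is that FHK give $H^1\cong\ZZ^N$ only in codimension/window situations like those of Table~\ref{tab:cpt-families}. So I would either restrict the claim to the generic cases where their formula gives rank $N$, or verify rank $N$ separately using the substitution: primitivity (v) supplies the inflation action on $H^1$ whose eigenvalue data pins the rank. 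I would first check the four families of Table~\ref{tab:cpt-families} against FHK's tables, and flag any extra hypothesis needed for the general statement.
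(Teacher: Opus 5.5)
\textbf{Where you match the paper, and where you improve on it.} Your CF1--CF3 follow the paper: CF1 comes from Lemma~\ref{lem:face-sum-general}, CF2 from Lemma~\ref{lem:cpt-local-step}, and CF3 from unboundedness of $x_k$ fed into Lemma~\ref{lem:unbounded-nontrivial}. Your slab-and-tube argument (bounded $\pEperp$-component plus relative density of $V$) is a more careful version of the paper's ``ray'' sentence. One minor slip: it is $\mathbf{e}_k$, not $\mathbf{e}_k^*$, that would have to be orthogonal to $\pE$. For CF4 you take a better route than the paper's proof of this theorem, which simply cites Forrest--Hunton--Kellendonk. Your derivation from Lemma~\ref{lem:unbounded-nontrivial} plus injectivity~(ii) is correct for every CPT in Definition~\ref{def:cpt}. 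It is exactly the argument the paper itself uses for CF4 in Proposition~\ref{prop:icosahedral-cf}.

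\textbf{The gap: CF5.} You leave CF5 unproved, and neither of your two suggestions can close it, because CF5 is false under the stated hypotheses.
\begin{itemize}
\item The paper disposes of CF5, and of the basis half of CF4, by attributing to FHK the statement $\operatorname{rank}\Hcech^1(\Omega_{\mathcal{T}};\ZZ)=N$ for every generic-window CPT. FHK prove no such statement.
\item The classes $[\Delta_k]$ are the pull-backs of $dx_1,\dots,dx_N$ along the torus parametrisation $\Omega_{\mathcal{T}}\to\RR^N/\ZZ^N$. The window boundary generally contributes further classes, so your remark that FHK identify the degree-one generators with the $dx_k$ is not right in general.
\item Concretely, the Ammann--Beenker tiling meets (i)--(v): lattice $\ZZ^4$, injective irrational projection, octagonal window with non-singular offset, FLC, primitive substitution. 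Yet $\Hcech^1\cong\ZZ^5$ (see the codimension-two computations of Forrest--Hunton--Kellendonk, or Sadun's book). So $\operatorname{rank}H^1_{PE}=5\neq 4=N$, and $\{\Delta_1,\dots,\Delta_4\}$ cannot be a basis of the free part.
\item Your inflation route fails for the same reason: the substitution's action on $H^1$ constrains eigenvalues, not the number of classes.
\item Even where the rank is $N$, $N$ independent classes need only span a finite-index subgroup. A basis claim therefore needs a separate saturation argument, which neither you nor the paper supplies.
\item Hypothesis~(iii), as written in Definition~\ref{def:cpt}, only says that the particular tiling is non-singular. It does not control the window-boundary orbit data on which FHK's rank formulas actually depend.
\end{itemize}

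\textbf{What your argument does prove.} Your argument establishes CF1--CF4, i.e.\ an injection $\ZZ^N\hookrightarrow H^1_{PE}$ and hence $\operatorname{rank}H^1_{PE}\ge N$. That is the provable statement. CF5, and with it the conclusion ``rank $r=N$'', has to be weakened to this inequality; your instinct to flag an extra hypothesis was the right one. When you check Table~\ref{tab:cpt-families}, note also that the Penrose lift to $\ZZ^5$ violates~(ii), since $\sum_k\mathbf{e}_k^*=\tfrac25\sum_k\mathbf{n}_k=0$.
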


\begin{proof}
We verify CF1--CF5 for $(\mathcal{T},\sigma,M)$.

\medskip
\noindent\textbf{CF1 (cocycle / face sums vanish).}
By Lemma~\ref{lem:face-sum-general}, each $\Delta_k$ is a cocycle.

\medskip
\noindent\textbf{CF2 (pattern-equivariance).}
Since $(\mathcal{T},\sigma,M)$ has finite local complexity, the lattice step
$\Phi(v)-\Phi(u)$ across any directed edge $(u\to v)$ is determined by a bounded
neighborhood of $(u\to v)$ (Lemma~\ref{lem:cpt-local-step}).
Therefore each coordinate difference $\Delta_k(u\to v)=x_k(v)-x_k(u)$ is
pattern-equivariant.

\medskip
\noindent\textbf{CF3 (non-triviality in $H^1_{PE}$).}
The primitive $h_k = x_k\colon V\to\ZZ$ is unbounded on $V$: since $\pE$ is irrational (Definition~\ref{def:cpt}(i)), the projection $\pi(\mathbf{e}_k)$ is non-zero in $\pE$ (injectivity of $\pi|_{\Lambda}$ implies $\pi(\mathbf{e}_k)\neq 0$ for every nonzero lattice vector), so $x_k(v) = \Phi(v)\cdot\mathbf{e}_k$ grows without bound as $v\to\infty$ along any ray in $\pE$ with a positive component along $\pi(\mathbf{e}_k)$. By Lemma~\ref{lem:unbounded-nontrivial}, $[\Delta_k]\neq 0$ in $H^1_{PE}$.

\medskip
\noindent\textbf{CF4 (linear independence over $\ZZ$).}
\emph{(Independence.)}
For CPTs with generic window (Definition~\ref{def:cpt}(iii)), the pattern-equivariant
cohomology is free abelian of rank~$N$, and the $N$ lattice-coordinate cochains
$\Delta_1,\ldots,\Delta_N$ represent a $\ZZ$-basis; see
Forrest--Hunton--Kellendonk~\cite{ForrestHuntonKellendonk2002}.
In particular, the classes $[\Delta_k]$ are $\ZZ$-linearly independent.

\medskip
\noindent\textbf{CF5 (maximality of rank $N$).}
The Forrest--Hunton--Kellendonk computation~\cite{ForrestHuntonKellendonk2002}
gives
\[
  \operatorname{rank}_\ZZ\Hcech^1(\Omega_{\mathcal{T}};\ZZ) = N
\]
for
any CPT satisfying the generic window condition
(Definition~\ref{def:cpt}(iii)).
Together with CF4, the $N$ linearly independent classes
$[\Delta_1],\ldots,[\Delta_N]$ form a $\ZZ$-basis for the free part of
$H^1_{PE}$, so $r = N$.
\end{proof}

\begin{corollary}[Penrose, Ammann--Beenker, Fibonacci, Icosahedral]
\label{cor:standard-cf}
The Fibonacci chain ($N=2$), Penrose P2 ($N=5$), Ammann--Beenker ($N=4$), and the
icosahedral Ammann tiling ($N=6$) are each conservation-forced of rank $N$, with a
conservation basis given by the lattice-coordinate cochains $\{\Delta_k\}$.
\end{corollary}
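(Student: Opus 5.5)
The plan is to deduce Corollary~\ref{cor:standard-cf} directly from Theorem~\ref{thm:cpt-conservation-forced}. For each of the four families we only need to check its hypotheses (i)--(v). We then read off the rank $r=N$ and the basis $\{\Delta_k\}$ defined by~\eqref{eq:lattice-cochain}.

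For each family we fix the standard cut-and-project data.
\begin{itemize}
\item Fibonacci: $\ZZ^2\to\RR$, with $\pE$ spanned by $(\phiGR,1)$ and window the projected unit square.
\item Penrose: $\ZZ^5\to\RR^2$ via $\mathbf{e}_k\mapsto\tfrac25\mathbf{n}_k$, as in Remark~\ref{rem:penrose-specialisation}.
\item Ammann--Beenker: $\ZZ^4\to\RR^2$ via the eighth roots of unity.
\item Icosahedral: $\ZZ^6\to\RR^3$ via the six fivefold icosahedral axes.
\end{itemize}
Irrationality~(i) and injectivity~(ii) both reduce to one fact: $\pE$ contains no nonzero rational vector, and neither does $\pEperp$. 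This holds because the projection coordinates involve $\phiGR$ or $\sqrt2$, and the only rational vector killed by the physical or internal projection is $0$. This is a finite linear-algebra check, done by comparing rational and irrational parts. Finite local complexity~(iv) is standard for cut-and-project sets with a polytopal window. Primitivity~(v) comes from the known substitutions. For Fibonacci and Penrose this is the Robinson/Fibonacci matrix $M$ in~\eqref{eq:subst-matrix}. For Ammann--Beenker the inflation factor is $1+\sqrt2$. For icosahedral tilings the inflation factor is $\phiGR^3$ (or $\phiGR$ up to decoration). Each case has a strictly positive power of the substitution matrix; this is Table~\ref{tab:cpt-families}.

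The step I expect to be the main obstacle is the generic-window condition~(iii). The canonical windows are the projections of the unit hypercube, and canonical Penrose windows are typically singular for special translates. So the window offset must be chosen generically: we pick $\gamma\in\pEperp$ off the countable union of translates $\partial W-\pi_\perp(\ZZ^N)$, which is possible by a Baire/measure argument. Because the hull is minimal, this choice does not change the local isomorphism class, and so it does not change $H^1_{PE}$. For Penrose, an additional point needs care. The standard scheme lives on the 4-dimensional invariant sublattice, and the fifth direction (the $\sum_k x_k$ constant of Theorem~\ref{thm:five-family}) carries no extra cohomology beyond what FHK already count. I would first try to handle this by citing the FHK rank computation directly for each family, under the normalisation where the $N$ lattice cochains give rank $N$, exactly as Theorem~\ref{thm:cpt-conservation-forced} uses it. If that cannot be done, I would record the actual FHK ranks in the proof.

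With (i)--(v) verified for every family, Theorem~\ref{thm:cpt-conservation-forced} gives conservation-forcedness of rank $N=2,5,4,6$ respectively, with conservation basis $\{\Delta_1,\ldots,\Delta_N\}$.
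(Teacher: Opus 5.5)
Your route is the paper's route. Its proof is the single sentence that the four systems are CPTs in the sense of Definition~\ref{def:cpt} with primitive substitutions, so Theorem~\ref{thm:cpt-conservation-forced} applies. But the hypothesis check you sketch fails at injectivity for Penrose, and no citation repairs it. In the $\ZZ^5$ scheme of Remark~\ref{rem:penrose-specialisation}, $\pi(1,1,1,1,1)=\tfrac25\sum_{k=0}^{4}\mathbf{n}_k=0$. So $(1,1,1,1,1)\in\ZZ^5\cap\pEperp$, and your claim that $\pEperp$ contains no nonzero rational vector is false here. The diagonal direction does not ``carry no extra cohomology''; it forces a relation. Indeed $\sum_k\Delta_k=\delta_0 s$ with $s=\sum_k x_k$. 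You quote $s$ as constant (proof of Theorem~\ref{thm:five-family}); in fact it is de~Bruijn's vertex index, which takes four values. Since $\pi_\perp(\ZZ^5)$ lies in discrete parallel planes meeting $W$ in disjoint pieces, $s$ is a locally constant function of the internal coordinate, hence PE. Either way $\sum_k[\Delta_k]=0$ in $H^1_{PE}$. So CF4 fails, and the five lattice cochains span a subgroup of rank at most $4$ in $\Hcech^1(\Omega_P;\ZZ)\cong\ZZ^5$. Passing to the injective rank-$4$ lattice restores (ii), but then $N=4$ while the rank is still $5$.

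Your fallback of recording the actual Forrest--Hunton--Kellendonk ranks would refute the statement rather than prove it. For Ammann--Beenker, $\pi|_{\ZZ^4}$ is injective and (i)--(v) all hold after a generic offset. The four $[\Delta_k]$ are then independent by the boundedness argument of the icosahedral CF4 step: a PE primitive is bounded, and $\sum_k c_kx_k$ is bounded only if $\pi(\mathbf{c})=0$. However, FHK compute $\Hcech^1\cong\ZZ^5$ for this hull, so CF5 with $r=4$ fails. In codimension $\ge 2$ the singular structure of the window contributes classes beyond the $N$ lattice-coordinate ones. So ``rank $N$ with the $[\Delta_k]$ as a basis'' is not what FHK establish. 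That claim is used in the CF4/CF5 steps of Theorem~\ref{thm:cpt-conservation-forced}, and hence in the paper's one-line proof. What your argument genuinely yields is CF1--CF3, CF4 whenever $\pi|_{\Lambda}$ is injective, and the full statement only for the Fibonacci chain. There $\Delta_1,\Delta_2$ are the two tile-counting cochains and do give a basis of $\Hcech^1\cong\ZZ^2$.
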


\begin{proof}
Each of these systems is a CPT satisfying Definition~\ref{def:cpt} and admitting
a primitive substitution structure; see~\cite{BaakeGrimm2013}.
Theorem~\ref{thm:cpt-conservation-forced} applies.
\end{proof}


\subsection{Verification for Penrose and Ammann--Beenker}
\label{sec:penrose-cf}

With the definition and tools of \S\ref{sec:cf-definition} in place, we verify the five CF conditions for the two planar tiling families developed in \S\ref{sec:main}. For later reference, the key Penrose-specific building blocks are: the edge-type warning (Remark~\ref{rem:edge-type-insufficiency}), the decoration$\to$crossing link (Remark~\ref{rem:decoration-to-crossing}), and the gluing/conservation viewpoint (Remark~\ref{rem:gluing-conservation}).

\begin{proposition}[Penrose P2 is conservation-forced of rank~$5$]
\label{prop:penrose-cf}
The five cochains form a conservation basis for Penrose P2.
\end{proposition}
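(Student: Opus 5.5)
The plan is to verify CF1--CF5 for the five Penrose bar-crossing cochains $\Delta_{\mathbf{n}_0},\ldots,\Delta_{\mathbf{n}_4}$ of Definition~\ref{def:bar-crossing}. I will first reduce to the lattice-coordinate cochains $\Delta_k$ of Definition~\ref{def:lattice-coord}, so that Theorem~\ref{thm:cpt-conservation-forced} (equivalently Corollary~\ref{cor:standard-cf}) can be applied with $N=5$. The link is Theorem~\ref{thm:five-family}(ii) together with Remark~\ref{rem:penrose-specialisation}. The heights $h_{\mathbf{n}_k}$ coincide with de~Bruijn's strip indices $m_k^{\mathrm{dB}}$, and these are the lattice coordinates $x_k$ up to the additive normalisation at $v_0$. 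Hence $\Delta_{\mathbf{n}_k}=\delta_0 h_{\mathbf{n}_k}=\delta_0 x_k=\Delta_k$ as cochains on $\vec{E}$, and the two candidate conservation bases are literally equal.

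I will still check each condition directly in the paper's own P2 vocabulary, invoking the CPT theorem only where FHK input is unavoidable.

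\textbf{CF1.} This is Lemma~\ref{lem:tile-sum-zero}: the tile-side sums vanish, and on a valid tiling they equal the global sums by Lemma~\ref{lem:gluing}(ii).

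\textbf{CF2.} By Definition~\ref{def:half-edge-crossing} and Lemma~\ref{lem:local-bars-determined}, $\Delta_{\mathbf{n}}(u\to v)$ is read off from the decoration of one incident tile. It is therefore PE of radius any $R$ exceeding the tile diameter.

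\textbf{CF3.} I will apply Lemma~\ref{lem:unbounded-nontrivial}, which requires $h_{\mathbf{n}_k}$ to be unbounded. Following the argument in Theorem~\ref{thm:cpt-conservation-forced}, take vertices $v$ far out along a ray with $\mathbf{n}_k\cdot(v-v_0)\to\infty$. Because the bar spacings are bounded by the long gap $L$, the number of $\mathbf{n}_k$-bars separating $v_0$ from $v$ grows at least like $\mathbf{n}_k\cdot(v-v_0)/L$. By the identification $h_{\mathbf{n}_k}=m_k^{\mathrm{dB}}$, this count is exactly $h_{\mathbf{n}_k}(v)$, so the height is unbounded.

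\textbf{CF4 and CF5.} Here I will cite Forrest--Hunton--Kellendonk: $\check H^1(\Omega_P;\ZZ)\cong\ZZ^5$, with the five bar-crossing classes as generators (as recalled in \S\ref{sec:cohomology}). Kellendonk--Putnam transfers this to $H^1_{PE}$. Five generators of $\ZZ^5$ are automatically a $\ZZ$-basis, since a surjection $\ZZ^5\to\ZZ^5$ is an isomorphism. This gives independence (CF4) and maximal rank $r=5$ (CF5) at once.

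The main obstacle is the identification step $\Delta_{\mathbf{n}_k}=\Delta_k$. It requires that the orientation and indexing conventions for the Ammann normals match those of the standard $\ZZ^5$ projection, and that the additive constants in de~Bruijn's strip indices are harmless. They are, because constants are killed by $\delta_0$.

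A subtler point is that Penrose P2 is usually presented as mutually locally derivable from the rhombic P3 CPT rather than being a CPT itself. I would handle this by noting that MLD equivalence preserves $H^1_{PE}$ and PE-ness. That way CF4--CF5 may be read off for P3 and transported back, rather than asserting the FHK basis statement directly for the dart--kite complex.
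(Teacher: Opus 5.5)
\textbf{Same route as the paper, but it fails at CF4--CF5.} Your CF1--CF3 follow the paper's verification: the tile-sum lemma, decoration locality, and Lemma~\ref{lem:unbounded-nontrivial}. Your CF3 actually uses the correct direction, an \emph{upper} bound $L$ on bar gaps. It needs only Proposition~\ref{prop:height-potential}(iii), not the de~Bruijn identification. The genuine gap is CF4--CF5.

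Applying Lemma~\ref{lem:unbounded-nontrivial} to integer combinations is the natural test of independence, and it fails for $c=(1,1,1,1,1)$:
\begin{itemize}
\item Each family is a Fibonacci grid with a common mean spacing $\bar s$ and bounded discrepancy (Sturmian balance). Hence $h_{\mathbf n_k}(v)=\mathbf n_k\cdot(v-v_0)/\bar s+O(1)$ uniformly in $v$.
\item Since $\sum_k\mathbf n_k=0$, the integer function $H:=\sum_k h_{\mathbf n_k}$ is bounded.
\item On a repetitive FLC tiling, a bounded integer primitive of a PE cocycle is itself PE. This is Gottschalk--Hedlund on the canonical transversal: a minimal subset of the closure of $\{(\mathcal T-v,H(v))\}$ is the graph of a continuous, hence locally constant, $\ZZ$-valued function.
\end{itemize}
Hence
\[
  \sum_{k=0}^{4}\,[\Delta_{\mathbf n_k}] \;=\; [\delta_0 H] \;=\; 0
  \quad\text{in } H^1_{PE}(G_{\mathcal T};\ZZ).
\]
So the five classes span a subgroup of rank $4$ in $\check H^1(\Omega_P;\ZZ)\cong\ZZ^5$. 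They are neither independent nor generators. The premise of your ``five generators of $\ZZ^5$ form a basis'' step is therefore false, and no citation can supply it. The paper's own proof rests on the same citation at this point. You have reproduced its route, but neither argument establishes CF4--CF5, and CF4 in fact fails for these five cochains.

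\textbf{The fallbacks do not repair this.}
\begin{itemize}
\item With $\mathbf e_k^*=\tfrac25\mathbf n_k$ one has $\pi(1,1,1,1,1)=0$. So the $\ZZ^5$ Penrose model violates Definition~\ref{def:cpt}(ii), and Theorem~\ref{thm:cpt-conservation-forced} does not apply.
\item On P3, de~Bruijn's index $\sum_k x_k(v)\in\{1,2,3,4\}$ is bounded and locally determined. This gives $\sum_k[\Delta_k]=0$ directly, so an MLD transport between P2 and P3 carries the same relation along rather than removing it.
\item The identification $\Delta_{\mathbf n_k}=\Delta_k$ is unsupported. The lattice coordinates $x_k$ are strip indices of de~Bruijn's \emph{periodic} pentagrid (dual to P3), whereas Ammann bars form Fibonacci grids. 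The two reconstruction formulas cannot force $h_{\mathbf n_k}-x_k$ to be constant, because $\mathbf x\mapsto\tfrac25\sum_k x_k\mathbf n_k$ kills $(1,\dots,1)$.
\end{itemize}
What your argument can actually prove is CF1--CF3 for all five cochains, plus independence of any four of them. Any nonzero integer combination supported on four indices has $\sum_k c_k\mathbf n_k\neq0$, hence an unbounded primitive. The remaining rank of $\check H^1(\Omega_P;\ZZ)\cong\ZZ^5$ is not accounted for by these cochains.
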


\begin{proof}[Verification]
\emph{CF1}: Lemma~\ref{lem:tile-sum-zero} (face sums vanish) and Proposition~\ref{prop:height-potential}(i) (cycle closure).

\emph{CF2}: By Definition~\ref{def:half-edge-crossing},
$\Delta_{\mathbf{n}_k}(u\to v)$ is determined by the Penrose edge decoration within the $1$-neighbourhood of $\{u,v\}$~\cite[Ch.~6]{BaakeGrimm2013}.

\emph{CF3}: The height function $h_{\mathbf{n}_k}$ counts Ammann bars crossed from $v_0$ to $v$; since bars have spacing bounded below by $S>0$, $h_{\mathbf{n}_k}$ is unbounded on~$V$. By Lemma~\ref{lem:unbounded-nontrivial}, $[\Delta_{\mathbf{n}_k}]\neq 0$ in $H^1_{PE}$.

\emph{CF4--CF5}: By Anderson--Putnam~\cite{AndersonPutnam1998} and Kellendonk--Putnam~\cite{KellendonkPutnam2006},
\[
  H^1_{PE} \cong \Hcech^1(\Omega_P;\ZZ) \cong \ZZ^5,
\]
and the five bar-crossing classes form a $\ZZ$-basis~\cite{Sadun2008}; see \S\ref{sec:cohomology} for the cohomological interpretation.
\end{proof}

The Ammann--Beenker (octagonal) tiling $\mathcal{T}_{\mathrm{AB}}$ tiles $\RR^2$ by squares and $45^\circ$-rhombi with inflation factor $\lambda_{\mathrm{PF}} = 1+\sqrt{2}$ (silver ratio) and substitution matrix $M_{\mathrm{AB}} = \bigl(\begin{smallmatrix}2&1\\1&0\end{smallmatrix}\bigr)$~\cite{BaakeGrimm2013}.
It has four Ammann bar families at angles $0^\circ$, $45^\circ$, $90^\circ$, $135^\circ$.

\phantomsection\label{prop:AB-cf}%
The Ammann--Beenker tiling is conservation-forced of rank~$4$; this is the $N=4$ case of Corollary~\ref{cor:standard-cf}. Geometric detail follows.

\begin{remark}
\label{rem:AB-geometric}
For geometric intuition, one may view the four conservation cochains for
the Ammann--Beenker tiling as the signed crossings of the four Ammann bar
families across directed edges, in direct analogy with the Penrose case.
\end{remark}

\subsection{Verification for the Fibonacci chain}
\label{sec:fibonacci}

The \emph{Fibonacci chain} is the simplest non-trivial CPT: $d=1$,
$N=2$, with two prototiles (short S and long L segments in the ratio
$1:\phiGR$).
It is obtained by projecting $\ZZ^2$ to the line
$\pE = \{(x,y): y=x/\phiGR\}$ with the acceptance window
$W = [0,1)$ in $\pEperp$.
Vertices of the Fibonacci chain satisfy
$\Phi(v) = (x_1(v),x_2(v)) \in \ZZ^2$ with
$v = x_1(v) + x_2(v)/\phiGR \in \RR$.

\phantomsection\label{prop:fibonacci-cf}%
The Fibonacci chain is conservation-forced of rank~$2$; this is the $N=2$ case of Corollary~\ref{cor:standard-cf}, with conservation basis $\{\Delta_1,\Delta_2\}$ from~\eqref{eq:lattice-cochain}.

\subsection{Verification for the icosahedral Ammann tiling}
\label{sec:icosahedral}

The \emph{icosahedral Ammann tiling} is the three-dimensional CPT with
$d=3$, $N=6$, and $\lambda_{\mathrm{PF}} = \phiGR$.
It arises from a projection of $\ZZ^6$ to $\RR^3$ aligned with the
icosahedral symmetry group $I_h$; its prototiles are two rhombohedra
(prolate and oblate) related by golden-ratio scaling.
See~\cite{BaakeGrimm2013} for the standard reference.

\begin{proposition}[Icosahedral Ammann tiling is conservation-forced of rank~$6$]
\label{prop:icosahedral-cf}
The six lattice-coordinate cochains $\Delta_k$, $k=1,\ldots,6$,
form a conservation basis for the icosahedral Ammann tiling.
\end{proposition}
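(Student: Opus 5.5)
The plan is to obtain Proposition~\ref{prop:icosahedral-cf} as the $N=6$, $d=3$ instance of Theorem~\ref{thm:cpt-conservation-forced}. As with Corollary~\ref{cor:standard-cf}, the real work is checking the hypotheses for this tiling. I will still verify CF1--CF5 explicitly, because the proposition is stated in dimension three. In the proof of Theorem~\ref{thm:cpt-conservation-forced} the $2$-cells of $X_{\mathcal{T}}$ are the rhombic faces of the rhombohedra rather than whole tiles, so that is the case to watch.

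\emph{Step 1: hypotheses.} Take the standard icosahedral scheme: $\Lambda=\ZZ^6$, and $\pE\subset\RR^6$ the $3$-dimensional subspace on which the icosahedral group acts by its $3$-dimensional irreducible representation, with $\pEperp$ carrying the Galois-conjugate representation. Irrationality holds because the projected basis vectors $\mathbf{e}_k^*$ point to the six fivefold axes. Their $\ZZ$-span is the icosahedral module $\ZZ[\phiGR]$, which has rank $6$ and is dense in $\RR^3$. The same rank count shows $\pi|_{\ZZ^6}$ is injective, i.e.\ $\ZZ^6\cap\pEperp=\{0\}$. For the window I take the projected unit $6$-cube, a rhombic triacontahedron, in generic position: choose a shift $\gamma$ such that $\partial(W+\gamma)$ avoids the countable set $\pi_\perp(\ZZ^6)$, which a generic $\gamma$ achieves. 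FLC and a primitive substitution with $\lambda_{\mathrm{PF}}=\phiGR$ are standard, and I will cite \cite{BaakeGrimm2013} for both.

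\emph{Step 2: CF1--CF3.} CF1 is Lemma~\ref{lem:face-sum-general}. That lemma is stated for arbitrary $2$-cells, so it covers every rhombic face of the $3$-dimensional complex. Every closed walk in the $1$-skeleton then bounds a sum of such faces, because $X_{\mathcal{T}}\cong\RR^3$ has $H_1=0$. CF2 is Lemma~\ref{lem:cpt-local-step}. For CF3 I use $\mathbf{e}_k^*\neq 0$. Since $V$ is relatively dense, there are vertices $v$ with $\mathbf{e}_k^*\cdot v\to\infty$. Proposition~\ref{prop:cpt-reconstruction} together with boundedness of $\pi_\perp(\Phi(v))\in W$ then forces $|x_k(v)|\to\infty$. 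The point is that $\Phi(v)$ is pinned within bounded distance of the physical subspace, so an unbounded physical coordinate forces an unbounded lattice coordinate. Lemma~\ref{lem:unbounded-nontrivial} then applies.

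\emph{Step 3 (main obstacle): CF4--CF5.} These need $H^1_{PE}\cong\Hcech^1(\Omega;\ZZ)$ to have rank exactly $6$, with the classes $[\Delta_k]$ forming a basis. Here I will rely on Forrest--Hunton--Kellendonk~\cite{ForrestHuntonKellendonk2002}. For codimension-$3$ icosahedral projection tilings they show that $H^1$ is $\ZZ^6$, generated by the dual coordinate classes, even though the higher groups are large. I must check that the paper's generic-window convention matches FHK's. If FHK's result were only rational or up to finite index, the fallback is a two-part argument. Independence: a relation $\sum c_k\Delta_k=\delta_0 f$ with $f$ PE would make $\sum c_k x_k$ bounded on $V$. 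Then $\sum c_k\,\mathbf{e}_k^*=0$ and $\sum c_k\,\pi_\perp(\mathbf{e}_k)=0$, which forces $c=0$. Spanning: this genuinely requires the cohomology computation, so I will cite it.
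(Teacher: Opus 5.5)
Your plan is essentially the paper's own proof: CF1 from Lemma~\ref{lem:face-sum-general}, CF2 from Lemma~\ref{lem:cpt-local-step}, CF3 from unboundedness of $x_k$ together with Lemma~\ref{lem:unbounded-nontrivial}, CF4 by precisely your fallback argument, and CF5 by citing Forrest--Hunton--Kellendonk. In CF4, boundedness of $\sum_k c_kx_k$ gives $\sum_k c_k\mathbf{e}_k^*=0$, and injectivity of $\pi|_{\ZZ^6}$ alone then forces $c=0$, so your extra relation $\sum_k c_k\pi_\perp(\mathbf{e}_k)=0$ is neither needed nor a direct consequence of boundedness.

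The CF5 citation is the one load-bearing step neither argument proves, and you are right to want to check it. The general form invoked ($\operatorname{rank}\Hcech^1=N$ for every generic-window CPT, with the coordinate classes as a basis) is, to my knowledge, not something Forrest--Hunton--Kellendonk establish: the Ammann--Beenker hull is usually reported to have $\Hcech^1\cong\ZZ^5$ with $N=4$. So CF5 for the icosahedral tiling has to be read off the explicit codimension-$3$ computation, and if that rank exceeds $6$ the proposition fails as stated.
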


\begin{proof}[Verification of CF1--CF5]
We verify each condition explicitly, highlighting the features specific to the 3D icosahedral setting.

\medskip
\noindent\textbf{CF1 (cocycle).}
By Lemma~\ref{lem:face-sum-general}, the oriented sum of $\Delta_k$ around the boundary
of every $2$-cell vanishes: for each $2$-cell (a rhombic face with four vertices
$v_0,v_1,v_2,v_3$ satisfying $v_0-v_1+v_2-v_3=0$ in $\ZZ^6$), the alternating
$k$-th coordinate sum is zero. This is purely lattice-arithmetic and does not
depend on the dimension $d$ or the group $I_h$.

\medskip
\noindent\textbf{CF2 (pattern-equivariance).}
The icosahedral Ammann tiling has finite local complexity: the generic window
condition (Definition~\ref{def:cpt}(iii)) and the compactness of $W \subset \pEperp$
imply that only finitely many vertex configurations appear within any ball
of fixed radius~\cite{BaakeGrimm2013}. The tile type of each directed edge
$(u\to v)$ is one of finitely many possibilities (the two rhombohedra contribute
$2\times 6 = 12$ directed edge types after orientation), so the lattice step
$\Phi(v)-\Phi(u) \in \{\pm\mathbf{e}_k\}$ is determined by the $1$-neighbourhood
of $(u\to v)$ (Lemma~\ref{lem:cpt-local-step}).
Hence each $\Delta_k(u\to v) = x_k(v)-x_k(u) \in \{0,\pm 1\}$ is
pattern-equivariant of radius~$R=1$.

\medskip
\noindent\textbf{CF3 (non-triviality in $H^1_{PE}$).}
The six projected icosahedral generators
$\mathbf{e}_k^* = \pi(\mathbf{e}_k) \in \RR^3$ are non-zero: since
$\pi|_{\ZZ^6}$ is injective (Definition~\ref{def:cpt}(ii)), no standard basis
vector lies in $\ker\pi$. Therefore $x_k(v) = \Phi(v)\cdot\mathbf{e}_k$ grows
without bound as $v \to \infty$ along any ray in $\pE$ with a positive
$\mathbf{e}_k^*$-component. By Lemma~\ref{lem:unbounded-nontrivial},
$[\Delta_k] \neq 0$ in $H^1_{PE}$ for each $k$.

\medskip
\noindent\textbf{CF4 (linear independence).}
The icosahedral symmetry group $I_h$ (of order $120$) acts on $\ZZ^6$ by
permuting the six generators; the six projections $\mathbf{e}_k^*$ are the
vertices of a regular icosahedron in $\RR^3$ and are pairwise $\QQ$-linearly
independent in $\RR^3$~\cite{BaakeGrimm2013}.
Suppose $\sum_{k=1}^6 c_k [\Delta_k] = 0$ in $H^1_{PE}$ for some
$c_k \in \ZZ$. Then $\sum_k c_k \Delta_k = \delta_0 f$ for a PE function
$f\colon V \to \ZZ$; since $f$ is PE and $\mathcal{T}$ has FLC, $f$ is bounded.
But $\sum_k c_k \Delta_k = \delta_0 (\sum_k c_k x_k)$, so $\sum_k c_k x_k - f$
is constant. The function $\sum_k c_k x_k(v) = \Phi(v) \cdot \sum_k c_k \mathbf{e}_k$
is bounded iff $\sum_k c_k \mathbf{e}_k \in \ker\pi$; by injectivity of $\pi|_{\ZZ^6}$
this forces $\sum_k c_k \mathbf{e}_k = 0$, which holds iff $c_k = 0$ for all $k$
(the standard basis is linearly independent). Hence the classes are $\ZZ$-independent.

\medskip
\noindent\textbf{CF5 (maximality, $r=6$).}
The Forrest--Hunton--Kellendonk computation~\cite{ForrestHuntonKellendonk2002} gives
$\operatorname{rank}_\ZZ \Hcech^1(\Omega_{\mathcal{T}};\ZZ) = N = 6$
for any CPT satisfying the generic window condition (Definition~\ref{def:cpt}(iii)).
Together with CF4, the six linearly independent classes $[\Delta_1],\ldots,[\Delta_6]$
form a $\ZZ$-basis for the free part of $H^1_{PE}$, so $r=6$.
\end{proof}

\begin{corollary}[3D reconstruction]
\label{cor:icosahedral-reconstruction}
For every vertex $v$ of the icosahedral Ammann tiling, $v = \sum_{k=1}^6 x_k(v)\,\mathbf{e}_k^* \in \RR^3$, where $\mathbf{e}_k^* = \pi(\mathbf{e}_k)$ are the six projected icosahedral generators. The six height functions $x_1,\ldots,x_6$ are integer-valued, locally readable, and their values at any vertex determine the vertex position exactly.
\end{corollary}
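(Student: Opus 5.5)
The statement has three parts, and I will treat them in turn. The first is the reconstruction formula $v=\sum_{k=1}^6 x_k(v)\,\mathbf{e}_k^*$. The second is that the six height functions are integer-valued and locally readable. The third is that their values at a vertex determine its position.

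For the formula, I specialise Proposition~\ref{prop:cpt-reconstruction} to $N=6$, $d=3$, with $\pi$ the icosahedral projection $\RR^6\to\pE\cong\RR^3$. By Definition~\ref{def:lattice-coord}, $\Phi(v)=\mathbf{x}(v)\in\ZZ^6$ is the unique lattice point with $\pi(\mathbf{x}(v))=v$. Linearity of $\pi$ then gives
\[
  v=\pi\Bigl(\sum_k x_k(v)\mathbf{e}_k\Bigr)=\sum_k x_k(v)\,\mathbf{e}_k^*.
\]
Since $\Phi(v)\in\ZZ^6$, each $x_k(v)$ is an integer. The six vectors $\mathbf{e}_k^*$ are the projected icosahedral generators, as in the verification of Proposition~\ref{prop:icosahedral-cf}. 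If the reference vertex is not the origin, I apply the same computation to $v-v_0$ and $\Phi(v)-\Phi(v_0)$. This is the three-dimensional analogue of Remark~\ref{rem:penrose-specialisation}.

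\emph{Local readability} needs some care, because $x_k$ itself is not pattern-equivariant: it is unbounded, so Lemma~\ref{lem:unbounded-nontrivial} and CF3 rule that out. The claim has to be read as saying that the increments are local. By Lemma~\ref{lem:cpt-local-step}, or CF2 in Proposition~\ref{prop:icosahedral-cf}, each $\Delta_k=\delta_0 x_k$ is pattern-equivariant of radius $1$. By Lemma~\ref{lem:face-sum-general} together with the discrete Poincar\'e lemma (Theorem~\ref{thm:poincare}), $x_k-x_k(v_0)$ is the integer path integral of $\Delta_k$ from $v_0$. So each height function is obtained by summing locally readable crossing data along any edge path, and it is integer-valued. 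Connectivity of the edge graph, which is the $1$-skeleton of a tiling of $\RR^3$, supplies the required paths.

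For exact determination of the position, the displayed formula expresses $v$ as an explicit function of $(x_1(v),\dots,x_6(v))$. Injectivity of $v\mapsto\Phi(v)$ is automatic, since $\Phi(u)=\Phi(v)$ implies $u=\pi\Phi(u)=\pi\Phi(v)=v$. The main obstacle is purely a matter of interpretation. I must make precise that ``locally readable'' refers to the cochains $\Delta_k$ rather than to the potentials $x_k$, since otherwise the corollary would contradict CF3. I would state this explicitly in the proof.
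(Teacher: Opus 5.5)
Your proposal is correct and follows the paper's route: the paper's proof simply specialises Proposition~\ref{prop:cpt-reconstruction} to $N=6$, $d=3$, and integrality and injectivity then follow at once from $\Phi(v)\in\ZZ^6$ and $v=\pi(\Phi(v))$. Your point that ``locally readable'' can only mean pattern-equivariance of the increments $\Delta_k$ (Lemma~\ref{lem:cpt-local-step}), and not of the unbounded $x_k$ themselves (which CF3 forbids), is a correct clarification that the paper's one-line proof leaves implicit; recovering $x_k-x_k(v_0)$ as a path integral is just telescoping of $\Delta_k=\delta_0 x_k$, so you do not need Lemma~\ref{lem:face-sum-general} or Theorem~\ref{thm:poincare} for it.
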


\begin{proof}
Immediate from Proposition~\ref{prop:cpt-reconstruction} with $N=6$, $d=3$.
\end{proof}

With all four families verified, we now introduce the concept that quantifies what the verified systems share.

\subsection{The recognition gap}
\label{sec:recognition-gap}

For any CPT, the tiling CW complex $X_{\mathcal{T}}$ is contractible
(homeomorphic to $\RR^d$).
Hence $H^1(X_{\mathcal{T}};\ZZ) = 0$, and every bar-crossing cochain
$\Delta_k$ is exact in ordinary cohomology, with primitive $h_k = x_k$.
By contrast, $h_k$ is unbounded (Lemma~\ref{lem:unbounded-nontrivial}),
so it cannot be pattern-equivariant, and $[\Delta_k] \neq 0$ in
$H^1_{PE}$ for each $k$. See Section~\ref{sec:cohomology} for the cohomological
interpretation of this comparison between ordinary and pattern-equivariant cohomology.
The conservation-basis cochains therefore lie in the kernel of the natural comparison map $\iota$, but outside the zero group, which motivates the following definition.

\begin{definition}[Recognition gap]
\label{def:recognition-gap}
Let $(\mathcal{T},\sigma,M)$ be a tiling system with finite local
complexity, and let
\[
  \iota\colon H^1_{PE}(G_{\mathcal{T}};\ZZ) \to H^1(X_{\mathcal{T}};\ZZ)
\]
be the natural comparison map from pattern-equivariant to ordinary
cohomology induced by the inclusion of PE cochains into all cochains.
The \emph{recognition gap} of $\mathcal{T}$ is
\[
  \mathcal{R}(\mathcal{T}) \;:=\; \ker(\iota)
    \;\subseteq\; H^1_{PE}(G_{\mathcal{T}};\ZZ).
\]
A class in $\mathcal{R}(\mathcal{T})$ represents a conservation law
that is \emph{locally readable} (the cochain is PE) yet whose primitive
is \emph{globally determined} (it is not a PE coboundary).
\end{definition}

\begin{remark}[Recognition gap for CPTs]
\label{rem:recognition-gap-penrose}
For a CPT, the space $X_{\mathcal{T}}$
is contractible, so $H^1=0$. Thus, the gap between PE cohomology ($\ZZ^N$) and ordinary cohomology ($0$) is the entire $\ZZ^N$:
\begin{equation}
  \mathcal{R}(\mathcal{T}) \;\cong\; \ZZ^N.
\label{eq:recognition-gap}
\end{equation}
For Penrose P2, $\mathcal{R} \cong \ZZ^5$; for the Fibonacci chain,
$\mathcal{R} \cong \ZZ^2$; for the icosahedral tiling,
$\mathcal{R} \cong \ZZ^6$.
\end{remark}

\begin{remark}[Fibonacci reconstruction and recognition gap]
\label{rem:fibonacci-reconstruction}
Proposition~\ref{prop:cpt-reconstruction} gives the explicit formula
$v = x_1(v) \cdot 1 + x_2(v) \cdot \phiGR^{-1}$ for each vertex
of the Fibonacci chain.
The recognition gap (Definition~\ref{def:recognition-gap}) is
$\mathcal{R} = H^1_{PE} \cong \ZZ^2$:
both height functions $x_1, x_2$ are unbounded but locally computable
(from the tile type of each edge), hence non-PE primitives.
The chain is completely described by two integer sequences that grow without
bound; the increments $\Delta_k(u\to v)\in\{0,1\}$ take only two values at each
step, but their sequence of occurrence is quasiperiodic (a Sturmian/Fibonacci
word), not periodic.
\end{remark}

\subsection{Open question: characterisation of conservation-forced systems}
\label{sec:cf-conjecture}

Proposition~\ref{prop:penrose-cf}, Corollary~\ref{cor:standard-cf} (Ammann--Beenker, Fibonacci), and Proposition~\ref{prop:icosahedral-cf}, together with Theorem~\ref{thm:cpt-conservation-forced}, establish the direction (b)$\Rightarrow$(a) of Conjecture~\ref{conj:cf} for the full CPT class (all four cases are collected in Table~\ref{tab:cf-hierarchy}). The converse direction---that every conservation-forced tiling must be a Pisot CPT---remains open and is the main open problem of this paper:

The direction (b)$\Rightarrow$(a) has been verified for the Fibonacci chain ($N=2$, Corollary~\ref{cor:standard-cf}), Penrose P2 ($N=5$, Proposition~\ref{prop:penrose-cf} and Theorem~\ref{thm:five-family}), Ammann--Beenker ($N=4$, Corollary~\ref{cor:standard-cf}), and the icosahedral tiling ($N=6$, Proposition~\ref{prop:icosahedral-cf}). These standard cases are summarised in Corollary~\ref{cor:standard-cf}; see also Remark~\ref{rem:fibonacci-reconstruction} for the explicit Fibonacci reconstruction formula.
In particular, CF1--CF3 already force aperiodicity (Remark~\ref{rem:cf-aperiodic}). The open converse direction and the non-CPT/non-Pisot case are discussed in \S\ref{sec:discussion}.

\begin{table}[ht]
\centering
\caption{Conservation-forced quasicrystalline substitution systems
verified in this paper.
The conservation rank $r = N$ equals the ambient lattice dimension
(Theorem~\ref{thm:cpt-conservation-forced}).}
\label{tab:cf-hierarchy}
\small
\begin{tabular}{llcccl}
\toprule
\textbf{System} & \textbf{Prototiles} & $d$ & $N{=}r$ & $\lambda_{\mathrm{PF}}$ & \textbf{Proved} \\
\midrule
Fibonacci chain    & S, L segments           & 1 & 2 & $\phiGR\approx 1.618$ & Cor.~\ref{cor:standard-cf} \\
Penrose (P2)       & dart, kite              & 2 & 5 & $\phiGR\approx 1.618$ & Prop.~\ref{prop:penrose-cf} \\
Ammann--Beenker    & square, rhombus         & 2 & 4 & $1+\sqrt{2}\approx 2.414$ & Cor.~\ref{cor:standard-cf} \\
Icosahedral Ammann & two rhombohedra         & 3 & 6 & $\phiGR\approx 1.618$ & Prop.~\ref{prop:icosahedral-cf} \\
\bottomrule
\end{tabular}
\end{table}

\begin{remark}[Beyond the four verified cases]
\label{rem:beyond-four}
Theorem~\ref{thm:cpt-conservation-forced} applies to any primitive substitution
CPT satisfying Definition~\ref{def:cpt}.
For example, dodecagonal tilings have $\lambda_{\mathrm{PF}}=2+\sqrt{3}\approx3.732$
(see~\cite[Ch.~6]{BaakeGrimm2013}). Several inequivalent $12$-fold substitution tilings share this eigenvalue, and the conservation rank depends on the specific hull. An ordering of the verified systems by $\lambda_{\mathrm{PF}}$ (or equivalently by the reciprocal-invariant scalar $J(\lambda_{\mathrm{PF}})$) appears in Appendix~\ref{app:golden}.
\end{remark}

Section~\ref{sec:conservation-forced} has established that the conservation-forced structure is not a Penrose-specific artifact but a theorem for the entire CPT class. Section~\ref{sec:discussion} places these results in the broader landscape of related frameworks (Thurston, Conway--Lagarias) and identifies the open problems that remain.

\section{Discussion}
\label{sec:discussion}

This section addresses three questions left open by the main results. First, we identify how far the framework extends beyond the four verified families: to all primitive-substitution CPTs by Theorem~\ref{thm:cpt-conservation-forced}, and conjecturally to non-CPT and non-Pisot settings. Second, we situate the framework among classical antecedents (Thurston; Conway--Lagarias). Third, we raise two physically motivated open questions about observable signatures of the conservation structure.

By Theorem~\ref{thm:cpt-conservation-forced}, the extension of this framework to the full class of primitive-substitution CPTs is a proved result, not a hypothesis.
The genuine open question is how far the Penrose-specific half-edge/gluing construction extends beyond CPTs.
For higher-dimensional systems, Theorem~\ref{thm:cpt-conservation-forced} and Proposition~\ref{prop:icosahedral-cf} already show that conservation forcing holds for the icosahedral Ammann tiling ($d=3$, $N=6$). Other $3$D CPT families (e.g.\ further icosahedral model sets arising from cut-and-project schemes) are natural targets for the same analysis; see~\cite{KatzDuneau1986} for background on quasiperiodic $3$D cut-and-project constructions with icosahedral symmetry. See Remark~\ref{rem:beyond-four} for additional examples in the CPT class.

\textbf{Scope within the Penrose family and beyond.}
Within the Penrose family the results are stated for the P2 (dart--kite)
formulation; for P3 (thick/thin rhomb) the five Ammann bar families have
the same structure and the framework transfers directly, while P1 (six prototiles) has a more complex bar structure requiring a separate verification (see~\cite[Ch.~6]{BaakeGrimm2013} for the P1 bar families).

The harder question is the non-Pisot case.
Whether the framework extends to substitutions with non-Pisot Perron--Frobenius eigenvalues remains open.
In the non-Pisot setting, the eigenvectors of the substitution matrix span directions that are not rationally related to the standard basis; as a consequence, the lattice-coordinate cochains $\Delta_k = \delta_0 x_k$ may fail to take integer values on the vertex set.
Even when $\Delta_k$ is integer-valued, it may no longer reconstruct vertex positions via the CPT formula~\eqref{eq:cpt-reconstruction}, since the injectivity of $\pi|_\Lambda$ relied on by that formula can fail outside the Pisot setting.
See~\cite[Ch.~5]{BaakeGrimm2013}, \cite{Sadun2008}, and~\cite{Siegel1944} for the relevant algebraic number theory.

\textbf{Relationship to classical antecedents.}
The relationship of this framework to two classical antecedents is worth making explicit. Thurston~\cite{Thurston1990} shows that tileability of a region by dominoes or lozenges is equivalent to cycle closure of a height-difference cochain on the dual graph, and that a consistent height function exists if and only if the region is tileable. The conceptual architecture of Proposition~\ref{prop:height-potential} and Theorem~\ref{thm:matching-conservation} is therefore Thurston's, specialised to the aperiodic Penrose setting. The differences, however, are structural. Thurston's framework involves a single height function, whereas Penrose tilings require five independent cochains---one per bar family---giving rise to a five-dimensional cohomology group $\ZZ^5 \cong H^1_{PE}$; the five-family validity criterion and pentagrid reconstruction (Theorem~\ref{thm:five-family}) have no counterpart in Thurston's setting. Thurston's framework applies to finite regions, so the height function is globally determined on a compact domain and takes only finitely many values; in the Penrose case the height functions are unbounded and not pattern-equivariant, producing the recognition gap (Remark~\ref{rem:recognition-gap-penrose}).
Finally, Thurston's framework begins with a fixed tileable region and asks whether a tiling exists; the half-edge/gluing construction (Definition~\ref{def:half-edge-crossing}, Lemma~\ref{lem:gluing}) addresses the complementary problem---given a candidate tiling that may violate matching rules, detect and localise the violations.

Conway and Lagarias~\cite{ConwayLagarias1990} frame tileability as a word
problem: each tile shape contributes a group element, and a region is tileable
iff a certain product equals the identity.
That identity condition is a conservation law in the present sense---the word
around a closed path must equal the identity, which is the group-theoretic
analogue of cycle closure---so the two frameworks share a common algebraic core.
They differ, however, in what they make accessible.
The tiling-group approach detects obstructions \emph{globally}: one computes
the group element for the boundary of the region and checks whether it equals
the identity.
The gluing-condition check (Remark~\ref{rem:algorithm}; see also
Section~\ref{sec:algorithm}), by contrast, detects
obstructions \emph{locally}: it identifies the specific edge and bar family
where a violation occurs, enabling targeted repair.
The tiling group also carries algebraic information---the isomorphism class of
the group, the word corresponding to a given region boundary---but does not
naturally produce a scalar invariant at each vertex; the potential-theoretic
framework produces five integer-valued functions
$h_{\mathbf{n}_0},\ldots,h_{\mathbf{n}_4}$ that encode vertex positions
via de~Bruijn's formula (Theorem~\ref{thm:five-family}(ii)).
At the level of cohomology, the tiling group relates to the fundamental group
of the punctured region~\cite{ConwayLagarias1990}, while the present framework
connects directly to the cellular and pattern-equivariant cohomology of the
tiling hull, interfacing naturally with the Forrest--Hunton--Kellendonk
machinery~\cite{ForrestHuntonKellendonk2002} and enabling the conservation-rank
computation of \S\ref{sec:penrose-cf}.
Taken together, the potential-theoretic reformulation provides a third
perspective that is local (edge-by-edge verification), quantitative
(integer-valued height functions with a reconstruction formula), and directly
connected to pattern-equivariant cohomology; it does not supersede Thurston or
Conway--Lagarias but complements them.

\textbf{Open problems and physical questions.}
Beyond Conjecture~\ref{conj:cf}---the open classification problem already
identified in \S\ref{sec:cf-conjecture}---two further questions arise from
this work.
These questions shift from the purely mathematical setting to the physically motivated one, asking whether the conservation-forcing structure has observable signatures in real quasicrystalline materials.
First, can the height potential be directly observed in quasicrystal
diffraction patterns or electronic structure measurements?
The Ammann bars are geometrically visible in direct-lattice images, and the
conservation interpretation suggests that their information-theoretic content
may have physical consequences.
Second, can scale-ratio statistics based on the conserved cochains---that is,
sums of bar-crossing increments $\Delta_k$ along directed edge-paths from a fixed basepoint, yielding integer-valued path integrals that track position in the $k$-th lattice coordinate---be detected in physical quasicrystals, for
instance via scanning tunneling microscopy on Al--Pd--Mn surfaces
(see e.g.~\cite{ShenEtAl1999})?
Both questions concern whether the conservation-forcing structure of the tiling
has observable signatures at the level of physical quasicrystals, a direction
that may connect the present mathematical framework to experiment.

Section~\ref{sec:conclusion} summarises the main results and contributions in compact form, and restates the central open problem.

\section{Conclusion}
\label{sec:conclusion}

This paper has developed a cochain-first reading of Penrose aperiodic tilings and extended it to canonical projection tilings via the lattice-coordinate cochains, with a full conservation-forced theorem for primitive-substitution CPTs satisfying Definition~\ref{def:cpt}.
The unifying observation is that the signed Ammann bar-crossing count on directed edges is an antisymmetric $1$-cochain, that bar continuity---enforced by the matching rules---is precisely cycle closure, and that the resulting discrete potential is the classical Ammann height function. Theorem~\ref{thm:matching-conservation} makes this a four-way equivalence for candidate tilings via the half-edge/gluing construction: matching rules, bar continuity, cycle closure, and height-function existence are all equivalent without presupposing any of them. Theorem~\ref{thm:five-family} then assembles the five height functions into de~Bruijn's pentagrid reconstruction formula, identifying the potential-theoretic and algebraic descriptions of vertex position in a single
notation.

The main new contributions of this paper are: (i) the half-edge/gluing construction making the matching-rule/cochain equivalence rigorous for candidate tilings; (ii) Theorem~\ref{thm:five-family}, giving the five-family validity criterion and pentagrid reconstruction in unified notation; (iii) the conservation-forced framework and Definition~\ref{def:conservation-forced}; (iv) the extension of the conservation-forced picture to standard CPT families in dimensions $1$--$3$; and (v) the linear-time tiling validation algorithm (Proposition~\ref{prop:validation}), which provides an $O(n)$ procedure for verifying matching rules edge-by-edge and localising any violation to a specific bar family and edge.

For canonical projection tilings, the lattice-coordinate cochains $\Delta_k = \delta_0 x_k$ are cocycles (Lemma~\ref{lem:face-sum-general}) and reconstruct vertex positions (Proposition~\ref{prop:cpt-reconstruction}):
\[
  v = \sum_{k=1}^N h_k(v)\,\mathbf{e}_k^*,
\]
and, for primitive substitution CPTs
satisfying Definition~\ref{def:cpt}, form a conservation basis of rank $r = N$ (Theorem~\ref{thm:cpt-conservation-forced}). This last result, verified explicitly for the Fibonacci chain ($N=2$), Penrose P2 ($N=5$), Ammann--Beenker ($N=4$), and the icosahedral Ammann tiling ($N=6$), proves the implication (b)$\Rightarrow$(a) of Conjecture~\ref{conj:cf} for the CPT class (in particular, for Pisot substitution CPTs). The explicit $3$D reconstruction statement is
Corollary~\ref{cor:icosahedral-reconstruction}.

The cohomological content of these results is the recognition gap $\mathcal{R}(\mathcal{T}) \cong \ZZ^N$ (Definition~\ref{def:recognition-gap} and equation~\eqref{eq:recognition-gap}): each $\Delta_k$ is exact in ordinary cohomology---its primitive $h_k = x_k$ is a globally defined integer function---yet non-trivial in pattern-equivariant cohomology, because $h_k$ is unbounded while finite local complexity forces every PE function to take only finitely many values.
The conservation law is locally readable but the conserved quantity is globally determined.
The converse direction---that every conservation-forced tiling must be a Pisot primitive-substitution CPT satisfying
Definition~\ref{def:cpt}---remains open and is the main open problem of this paper.
A positive resolution of Conjecture~\ref{conj:cf} would establish conservation forcing as a complete algebraic characterisation of aperiodic order in the CPT class, giving a single cohomological condition that simultaneously certifies the quasicrystalline structure, reconstructs vertex positions, and separates these tilings from all periodic and non-Pisot substitution systems.


\section*{Data Availability Statement}
All data is contained within the article.


\appendix

\section{Explicit Tile-Boundary Calculations}
\label{app:tile-calcs}

The main text (Examples~\ref{ex:dart-patch}--\ref{ex:violation}) presents representative cases; here we record representative tile-boundary computations that illustrate the sign conventions and the telescoping cycle sums.

We verify cycle closure (Proposition~\ref{prop:height-potential}(i)) by explicit computation
on each prototile boundary.
Fix one Ammann bar family with normal~$\mathbf{n}$.

\paragraph{Dart boundary.}
A dart tile has vertices $\{v_0,v_1,v_2,v_3\}$ with long edges
$\overline{v_0v_1}$ and $\overline{v_3v_2}$, and short edges
$\overline{v_1v_2}$ and $\overline{v_0v_3}$
(cf.\ Section~\ref{sec:patch-example}).
For a bar family aligned so that bars cross the two long edges:
\begin{align*}
\Delta_{\mathbf{n}}(v_0 \to v_1) &= +1
  &&\text{(long edge $\overline{v_0v_1}$, forward bar crossing)},\\
\Delta_{\mathbf{n}}(v_1 \to v_2) &= 0
  &&\text{(short edge $\overline{v_1v_2}$, no crossing)},\\
\Delta_{\mathbf{n}}(v_2 \to v_3) &= -1
  &&\text{(long edge $\overline{v_3v_2}$, reverse crossing)},\\
\Delta_{\mathbf{n}}(v_3 \to v_0) &= 0
  &&\text{(short edge $\overline{v_0v_3}$, no crossing)}.
\end{align*}
Cycle sum: $(+1) + 0 + (-1) + 0 = 0$.

\paragraph{Kite boundary.}
A kite tile with vertices $\{w_0,w_1,w_2,w_3\}$ has two long and two short
edges.  For the same bar family:
\begin{align*}
\Delta_{\mathbf{n}}(w_0 \to w_1) &= +1 &&\text{(long edge, forward)},\\
\Delta_{\mathbf{n}}(w_1 \to w_2) &= 0 &&\text{(short edge)},\\
\Delta_{\mathbf{n}}(w_2 \to w_3) &= 0 &&\text{(short edge)},\\
\Delta_{\mathbf{n}}(w_3 \to w_0) &= -1 &&\text{(long edge, reverse)}.
\end{align*}
Cycle sum: $(+1) + 0 + 0 + (-1) = 0$.

These calculations depend on the relative orientation of the tile and the
bar family.
For other orientations, the pattern of $\pm 1$ and $0$ values changes,
but Lemma~\ref{lem:tile-sum-zero} guarantees that the sum remains zero
regardless: each local bar segment that enters tile~$t$ through one
boundary edge must exit through another (by transversality and the Jordan
curve argument), so crossing contributions always cancel in pairs.


\section{The Golden Ratio, Substitution Entropy, and Coherence Hierarchy}
\label{app:golden}

\begin{remark}[The golden ratio in the substitution picture]
\label{rem:golden}
The Perron--Frobenius eigenvalue of the Robinson triangle substitution
(equation~\eqref{eq:substitution})
is $\lambda_{\mathrm{PF}} = \phiGR$, and the substitution entropy is
\begin{equation}
  h_{\mathrm{sub}} = \ln\phiGR \approx 0.481\text{ nats}
\label{eq:entropy}
\end{equation}
by the standard formula for primitive substitutions~\cite{Queffelec2010}.
(Note that the \emph{topological} entropy of the Penrose system under
$\RR^2$-translation is \emph{zero}, since patch complexity grows
polynomially; $h_{\mathrm{sub}}$ measures hierarchical tile-count
growth, not translational complexity.)

The defining equation $\phiGR^2 = \phiGR + 1$, equivalently
$\phiGR^{-1} = \phiGR - 1$, yields the neat closed form:
if one sets $J(x) = \tfrac{1}{2}(x+x^{-1})-1 = \cosh(\ln x)-1$
(the unique scale-comparison cost satisfying $J(1)=0$ and the
d'Alembert relation; see~\cite{WashburnZlatanovic2026,Aczel1966}), then
\[
  J(\phiGR) = \tfrac{1}{2}(\phiGR + \phiGR - 1) - 1
            = \phiGR - \tfrac{3}{2} \approx 0.118.
\]
Since $J$ is strictly increasing on $(1,\infty)$ and $\phiGR$ is
the smallest quadratic Pisot number~\cite{Salem1963},
$J(\phiGR) < J(\alpha)$ for every other quadratic Pisot unit~$\alpha$.
Note that $\phiGR$ does \emph{not} minimise $J$ among \emph{all} Pisot
numbers: the plastic constant $\theta_P \approx 1.325$ (root of
$x^3-x-1$) gives $J(\theta_P) \approx 0.040$.
For the three distinct $\lambda_{\mathrm{PF}}$ values among the systems in this survey:
\[
  J(\phiGR) \approx 0.118
  \;<\; J(1+\sqrt{2}) = \sqrt{2}-1 \approx 0.414
  \;<\; J(2+\sqrt{3}) = 1.000.
\]
\end{remark}

\begin{remark}[Coherence hierarchy ordering]
\label{rem:coherence-hierarchy}
Conservation-forced systems may be ordered by their Perron--Frobenius
eigenvalue $\lambda_{\mathrm{PF}}$, or equivalently by the reciprocal-invariant
scalar $J(\lambda) = \cosh(\ln\lambda)-1 = \tfrac{1}{2}(\lambda+\lambda^{-1})-1$.
Since $J$ is strictly increasing on $(1,\infty)$, the ordering by $J$ is identical
to the ordering by $\lambda_{\mathrm{PF}}$:
\begin{equation}
  J(\phiGR) \approx 0.118
  \;<\;
  J(1+\sqrt{2}) = \sqrt{2}-1 \approx 0.414
  \;<\;
  J(2+\sqrt{3}) = 1.000.
\label{eq:cf-hierarchy}
\end{equation}
The Fibonacci chain and Penrose P2 share $\lambda_{\mathrm{PF}} = \phiGR$
because the Robinson triangle substitution matrix
$M_{P2} = \bigl(\begin{smallmatrix}1&1\\1&0\end{smallmatrix}\bigr)$
is identical to the Fibonacci matrix (equation~\eqref{eq:subst-matrix}); the 2D Penrose tiling embeds
this 1D dynamics in $\RR^2$ with five independent bar families,
raising conservation rank from~$2$ to~$5$.
The substitution entropy $h_{\mathrm{sub}} = \ln\lambda_{\mathrm{PF}}$
satisfies $J(\lambda_{\mathrm{PF}}) = \cosh(h_{\mathrm{sub}})-1$.
For the axiomatic derivation of the cost function~$J$ from information-theoretic principles, see~\cite{PardoGuerraEtAl2026}.
\end{remark}


\end{document}